\newcommand{\R}{\mathbb{R}}
\newcommand{\norm}[1]{\left\|#1\right\|}
\newcommand{\ps}[2]{\left\langle#1\middle\vert#2\right\rangle}
\newcommand{\ton}[1]{\left(#1\right)}
\newcommand{\abs}[1]{\left|#1\right|}
\newcommand{\loc}{\mathrm{loc}}
\newcommand{\ra}{\rightarrow}
\newcommand{\oF}{\mathcal{F}}
\newcommand{\oA}{\mathcal{A}}
\newcommand{\oB}{\mathcal{B}}
\newcommand{\diver}{\mathrm{div}}
\newcommand{\Se}{\mathscr{S}}
\newcommand{\eps}{\varepsilon}
\newcommand{\esssup}{\mathrm{esssup}}
\newcommand{\essinf}{\mathrm{essinf}}
\newcommand{\lip}{\mathrm{Lip}}
\newcommand{\disp}{\displaystyle}
\newcommand{\hol}{\mathrm{H\ddot{o}l}}
\newcommand{\pa}{\mathrm{pa}}
\newcommand{\stc}{\mathrm{st}}
\newcommand{\di}{\mathrm{d}}
\newcommand{\vol}{\mathrm{vol}}
\newcommand{\Ka}{\text{Khas'minskii }}
\newcommand{\K}{\mathcal K}
\newcommand{\Kpt}{\mathcal K_{\psi,\theta}}
\newcommand{\A}{\mathcal{A}}
\newcommand{\F}{\mathcal{F}}
\newcommand{\cc}{C^{\infty}_c(\Omega)}
\newcommand{\wup}{W^{1,p}(\Omega)}
\newcommand{\wupst}{W^{1,p}(\Omega)^\star}
\newcommand{\wupz}{W^{1,p}_0(\Omega)}
\newtheorem{prop}{Proposition}[section]
\newtheorem{teo}[prop]{Theorem}
\newtheorem{deph}[prop]{Definition}
\newtheorem{lemma}[prop]{Lemma}
\newtheorem{oss}[prop]{Remark}
\newtheorem{ex}[prop]{Example}
\newtheorem{rem}[prop]{Remark}
\newtheorem{cor}[prop]{Corollary}
\newtheorem*{notaz}{Notational conventions}
\begin{document}

\title[Liouville and Khas'minskii condition]{On the equivalence of stochastic completeness, Liouville and Khas'minskii condition in linear and nonlinear setting}

\date{\today}

\author{Luciano Mari}
\address{Luciano Mari\\ Dipartimento di Matematica\\
Universit\`a degli studi di Milano\\
via Saldini 50\\
20133 Milano, Italy, EU}
\email{luciano.mari@unimi.it}

\author{Daniele Valtorta}
\address{Daniele Valtorta \\ Dipartimento di Matematica\\
Universit\`a degli studi di Milano\\
via Saldini 50\\
20133 Milano, Italy, EU}
\email{danielevaltorta@gmail.com}

\subjclass[2010]{Primary 31C12 (potential theory on Riemannian manifolds, Secondary: 35B53 (Liouville theorems), 58J65 (stochastic equations and processes on manifolds), 58J05 (elliptic equations on manifolds)}

\keywords{Khas'minskii condition, stochastic completeness, parabolicity}

\begin{abstract}
Set in Riemannian enviroment, the aim of this paper is to present and discuss some equivalent characterizations of the Liouville property relative to special operators, in some sense modeled after the $p$-Laplacian with potential. In particular, we discuss the equivalence between the Lioville property and the \Ka condition, i.e. the existence of an exhaustion functions which is also a supersolution for the operator outside a compact set. This generalizes a previous result obtained by one of the authors and answers to a question in \cite{PRS1}.
\end{abstract}

\maketitle

\section{Introduction}
In what follows, let $M$ denote a connected Riemannian manifold of
dimension $m$, with no boundary. We stress that no completeness
assumption is required. The relationship between the probabilistic
notions of stochastic completeness and parabolicity (respectively the
non-explosion and the recurrence of the Brownian motion on $M$) and function-theoretic properties of $M$ has been the
subject of an active area of research in the last decades. Deep
connections with the heat equation, Liouville type theorems,
capacity theory and spectral theory have been described, for
instance, in the beautiful survey \cite{Grigoryan}. In \cite{PRS4} and \cite{PRS5}, the authors showed that stochastic
completeness and parabolicity are also related to weak maximum
principles at infinity. This characterization reveals to be
fruitful in investigating many kinds of geometric problems (for a
detailed account, see \cite{PRS3}). Among the various conditions
equivalent to stochastic completeness, the following two are of prior interest to us:
\begin{itemize}
\item[-] [\emph{$L^\infty$-Liouville}] for some (any) $\lambda>0$, the sole bounded, non-negative, continuous weak solution of $\Delta u -\lambda u \ge 0$ is $u=0$;\vspace{0.1cm}
\item[-] [\emph{weak maximum principle}] for every $u\in C^2(M)$ with $u^\star=\sup_Mu<+\infty$, and for every $\eta<u^\star$,
\begin{equation}\label{wmpclassic}
\inf_{\Omega_\eta} \Delta u \le 0, \qquad \text{where } \ \Omega_\eta= u^{-1}\{(\eta,+\infty)\}.
\end{equation}
\end{itemize}
%
%
%
R.Z. \Ka \cite{khas} has found the following condition
for stochastic completeness. We recall that $w\in C^0(M)$ is
called an exhaustion if it has compact sublevels
$w^{-1}((-\infty,t])$, $t\in \R$. 
\begin{teo}[Khas'minskii test, \cite{khas}]
Suppose that there exists a compact set $K$ and a function $w\in C^0(M)\cap C^2(M\setminus K)$ satisfying for some $\lambda>0$:
$$
(i)\quad w \ \text{is an exhaustion;} \qquad (ii) \quad  \Delta w
-\lambda w \le 0 \ \text{ on } M\backslash K.
$$
Then $M$ is stochastically complete.
\end{teo}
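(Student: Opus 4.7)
The plan is to reduce the theorem to the $L^\infty$-Liouville characterization of stochastic completeness stated in the introduction: it suffices to show that any bounded, non-negative, continuous weak solution $u$ of $\Delta u - \lambda u \ge 0$ is identically zero. I would argue by contradiction, assuming $u^\star := \sup_M u > 0$.

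As a preliminary reduction, replace $w$ by $w + C$ for a large constant $C \ge 0$: this preserves both the exhaustion property and the inequality $\Delta w - \lambda w \le 0$ (because $\lambda > 0$), so I may assume $w \ge 1$ on $M$. Set $w_0 := \inf_{\partial K} w$ and $\Omega_n := \{w < n\}$; the exhaustion property guarantees that each $\overline{\Omega_n}$ is compact and that $K \subset \Omega_n$ for $n$ large. On the relatively compact annulus $\Omega_n \setminus K$ I would solve the linear Dirichlet problem
\[
\Delta \phi_n - \lambda \phi_n = 0 \text{ in } \Omega_n \setminus K, \quad \phi_n|_{\partial K} = u, \quad \phi_n|_{\partial \Omega_n} = u^\star,
\]
and apply the maximum principle for $\Delta - \lambda$ (effective precisely because $\lambda > 0$) to the subsolution $u - \phi_n$, whose boundary values are non-positive: this yields $u \le \phi_n$ throughout $\Omega_n \setminus K$.

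The decisive step is then to dominate $\phi_n$ from above by a barrier built from $w$. Observe that any affine combination $\alpha w + \beta$ with $\alpha, \beta \ge 0$ is a supersolution of $\Delta - \lambda$, since $w$ is and the zeroth-order term has the right sign. I would choose $\alpha_n, \beta_n \ge 0$ so that the resulting $\psi_n(x) := \alpha_n w(x) + \beta_n$ dominates $u$ on $\partial K$ and equals $u^\star$ on $\partial \Omega_n$; matching the two boundary conditions forces $\alpha_n = O(1/n)$, so that $\psi_n(x) \to \sup_{\partial K} u$ as $n \to \infty$ at every fixed $x$. Comparison gives $\phi_n \le \psi_n$, and chaining the two bounds yields $u(x) \le \sup_{\partial K} u$ for all $x \in M \setminus K$. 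Hence $u^\star = \sup_M u = \sup_K u$, attained at some $x_0 \in K$ by compactness. Since $M$ has no boundary, $x_0$ is an interior maximum of $u$, so $\Delta u(x_0) \le 0$, contradicting $\Delta u(x_0) \ge \lambda u^\star > 0$.

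The main technical obstacle will be the low regularity of $w$, which is only continuous on $K$ and $C^2$ outside it: the sublevel sets $\Omega_n$ need not have Lipschitz boundary, so both the Dirichlet problem for $\phi_n$ and the comparison with $\psi_n$ must be interpreted weakly, and one has to approximate $\Omega_n$ by smoother subdomains (for instance via Sard's theorem applied to a smooth majorant of $w$). In the present linear framework these points are routine, but the analogous step will require substantially more care in the nonlinear $(p,\lambda)$-generalizations pursued later in the paper, since there linear superposition of supersolutions is no longer available and the barrier $\psi_n$ must be constructed by a different device.
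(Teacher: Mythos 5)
The paper never proves this theorem: it is quoted from Khas'minskii's original work, and the closest thing to a proof in the paper is the nonlinear analogue, namely the implication $(3)\Rightarrow(1)$ (i.e.\ $(K)\Rightarrow(L)$) in the proof of Theorem \ref{mainteo}. Your argument is correct in substance for the linear statement, but it follows a genuinely different route. You exhaust $M$ by the sublevels $\Omega_n=\{w<n\}$, build affine barriers $\psi_n=\alpha_n w+\beta_n$ on each annulus $\Omega_n\setminus K$, and let $\alpha_n\to 0$ to obtain $\sup_{M\setminus K}u\le\sup_{\partial K}u$; this uses linearity twice (superposition of supersolutions, and the auxiliary Dirichlet solutions $\phi_n$, which are in fact dispensable, since you may compare the subsolution $u$ with the supersolution $\psi_n$ directly and thereby sidestep most of the boundary-regularity issues you raise). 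The paper's argument instead fixes $\eta$ close to $u^\star$ so that $K\cap\{u>\eta\}=\emptyset$ and applies the comparison principle \emph{once} on the single set $V=\{u>\eta+w\}$, which is relatively compact precisely because $w$ is an exhaustion and $u$ is bounded; no limiting procedure and no rescaling of $w$ are needed, and this is exactly why that version of the argument survives in the nonlinear setting, where $\alpha w+\beta$ is no longer a supersolution and the smallness of $w$ near $K$ must be built into the definition of the Khas'minskii potential (the parameter $\eps$ in the triple $(K,\Omega,\eps)$). Two minor points in your write-up: first, the closing contradiction ``$\Delta u(x_0)\le 0$ versus $\Delta u(x_0)\ge\lambda u^\star>0$'' is a pointwise evaluation unavailable for a merely continuous weak subsolution; replace it by the strong maximum principle (a non-negative weak solution of $\Delta u-\lambda u\ge 0$ with $\lambda>0$ cannot attain a positive interior maximum, since otherwise it would be a positive constant, which is not a subsolution), which is how the paper argues that $u<u^\star$ on all of $M$. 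Second, with the normalization $\psi_n=u^\star$ on $\partial\Omega_n$ the constant $\beta_n$ may be negative for finite $n$ when $\sup_{\partial K}u$ is small; requiring only $\psi_n\ge u^\star$ there removes the issue. Neither point affects the validity of your approach.
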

A very similar characterization holds for the parabolicity of $M$.
Namely, among many others, parabolicity is equivalent to:
\begin{itemize}
\item[-] every bounded, non-negative continuous weak solutions of
$\Delta u\ge 0$ on $M$ is constant;\vspace{0.1cm}
\item[-] for every non-constant $u\in C^2(M)$ with $u^\star=\sup_Mu<+\infty$, and for every $\eta<u^\star$,
\begin{equation}\label{parclassic}
\inf_{\Omega_\eta} \Delta u < 0, \qquad \text{where } \ \Omega_\eta= u^{-1}\{(\eta,+\infty)\}.
\end{equation}
\end{itemize}
Note that the first condition is precisely case $\lambda=0$ of the
Liouville property above. As for \Ka type conditions, it has been proved by M. Nakai \cite{Nakai} and
Z. Kuramochi \cite{Kuramochi} that the parabolicity of $M$ is
indeed \emph{equivalent} to the existence of a so-called Evans
potential, that is, an exhaustion, harmonic function $w$ defined outside a
compact set $K$ and such that $w=0$ on $\partial K$. To the best
of our knowledge, an analogue of such equivalence for stochastic
completeness or for the nonlinear case has still to be proved, and this is the starting point of the present work.

With some modifications, it is possible to define the Liouville property, the \Ka test and Evans potentials also for $p$-Laplacians or other nonlinear operators, and the aim of this paper is to prove that in this more general setting the Liouville property is equivalent to the \Ka test, answering in the affirmative to a question raised in \cite{PRS1} (question 4.6). After that, a brief discussion on the connection with appropriate definitions of the weak maximum principle is included. The final section will be devoted to the existence of Evans type potentials in the particular setting of radially symmetric manifolds.
To fix the ideas, we state the main theorem in the ``easy case'' of the $p$-Laplacian, and then introduce the more general (and more technical) operators to which our theorem applies. Recall that for a function $u\in W^{1,p}_\loc(\Omega)$, the $p$-laplacian $\Delta_p$ is defined weakly as:
\begin{gather}\label{defplapl}
 \int_\Omega \phi \Delta_p u = -\int_\Omega \abs{\nabla u}^{p-2} \ps{\nabla u}{\nabla \phi}
\end{gather}
where $\phi\in \cc$ and integration is with respect to the Riemannian measure.
%
%
\begin{teo}\label{mainteolapla}
Let $M$ be a Riemannian manifold and let $p>1$, $\lambda \ge 0$.
Then, the following conditions are equivalent.
\begin{itemize}
 \item[$(W)$] The weak maximum principle for $C^0$
 holds for $\Delta_p$, that is, for every non-constant $u\in
 C^0(M)\cap W^{1,p}_{loc}(M)$ with $u^\star =\sup_M u <\infty$
 and for every $\eta<u^\star$ we have:
 \begin{gather}\label{wmpdef}
  \inf_{\Omega_\eta} \Delta_p u \leq 0 \qquad (<0 \text{ if }
 \lambda=0)
 \end{gather}
 weakly on $\Omega_\eta = u^{-1}\{(\eta, +\infty)\}$.
\item[$(L)$] Every non-negative, $L^\infty\cap W^{1,p}_\loc$
solution $u$ of $\Delta_p u -\lambda u^{p-1} \ge 0$ is constant
(hence zero if $\lambda>0$).
\item[$(K)$] For every compact $K$ with smooth boundary, there
exists an exhaustion $w \in C^0(M) \cap
W^{1,p}_\loc(M)$ such that
$$
w>0 \ \text{ on } M\backslash K, \quad w=0 \ \text{ on }
K, \quad \Delta_pw -\lambda w^{p-1} \le 0 \ \text{ on }
M\setminus K.
$$
\end{itemize}
\end{teo}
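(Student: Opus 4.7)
The plan is to prove the equivalences through the cycle $(K)\Rightarrow(L)\Leftrightarrow(W)$ together with the main technical implication $(L)\Rightarrow(K)$.

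For $(K)\Rightarrow(L)$, I would use the Khas'minskii exhaustion $w$ as an upper barrier. Since $\Delta_p$ and $v\mapsto v^{p-1}$ are both $(p-1)$-homogeneous, $\varepsilon w$ remains a supersolution of $\Delta_p v\leq\lambda v^{p-1}$ for every $\varepsilon>0$. Given a nonnegative bounded subsolution $u$, set $M_K:=\sup_K u$; on each sublevel $\{w<R\}\setminus K$ the function $\varepsilon w+M_K$ dominates $u$ on the boundary (on $\partial K$ because $w=0$ and $u\leq M_K$, on $\{w=R\}$ because $\varepsilon R$ eventually exceeds $\sup_M u-M_K$). The comparison principle for the monotone quasilinear operator $v\mapsto\Delta_p v-\lambda v^{p-1}$ yields $u\leq\varepsilon w+M_K$ inside. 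Sending first $R\to\infty$ and then $\varepsilon\to 0^+$ gives $u\leq M_K$ globally, so $u$ attains its supremum on $K$; the strong maximum principle for $p$-subharmonic functions then forces $u$ to be constant, and the constant must vanish when $\lambda>0$.

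For $(L)\Leftrightarrow(W)$, the direction $(W)\Rightarrow(L)$ is immediate: a nonconstant, nonnegative, bounded subsolution $u$ of $\Delta_p u\geq\lambda u^{p-1}$ satisfies $\Delta_p u\geq\lambda\eta^{p-1}$ weakly on $\Omega_\eta$ for any $\eta\in(0,u^\star)$, contradicting $(W)$ (the case $\lambda=0$ uses the strict conclusion). For $(L)\Rightarrow(W)$, I would negate $(W)$ to extract a nonconstant $u$ bounded above with $\Delta_p u\geq\sigma$ weakly on some $\Omega_\eta$, for $\sigma>0$ (or $\sigma=0$ if $\lambda=0$). Because the violation persists when $\eta$ is increased, one can push $\eta$ so close to $u^\star$ that $\sigma\geq\lambda(u^\star-\eta)^{p-1}$, and then $v:=(u-\eta)_+$ is, by standard truncation arguments in $W^{1,p}_{\loc}$, a globally defined nonnegative bounded nontrivial weak subsolution of $\Delta_p v\geq\lambda v^{p-1}$ on $M$, contradicting $(L)$.

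The main direction $(L)\Rightarrow(K)$ is where the real work lies. Fix an exhaustion $\{\Omega_n\}$ of $M$ by relatively compact open sets with smooth boundaries and $K\Subset\Omega_1$, and for each $n$ solve the Dirichlet problem
\[
\Delta_p\psi_n=\lambda\psi_n^{p-1}\text{ on }\Omega_n\setminus K,\qquad \psi_n|_{\partial K}=0,\qquad \psi_n|_{\partial\Omega_n}=1.
\]
Comparison gives $\psi_{n+1}\leq\psi_n$, so $\psi_n\searrow\psi_\infty$ on $M\setminus K$, and stability of weak solutions together with DiBenedetto-type interior regularity shows $\psi_\infty$ solves the same equation there. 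Extended by zero across $K$, $\psi_\infty$ becomes a global nonnegative bounded weak subsolution of $\Delta_p v\geq\lambda v^{p-1}$ on $M$ (the distributional contribution at $\partial K$ has the correct sign because $\psi_\infty\geq 0$ reaches $0$ along $\partial K$ from above), so by $(L)$ it vanishes. Dini's theorem then upgrades $\psi_n\to 0$ to uniform convergence on compact subsets of $M\setminus K$. The Khas'minskii exhaustion $w$ is finally produced by stacking: pick $n_k$ so that $\psi_{n_k}\leq 2^{-k}$ on $\overline{\Omega_k}\setminus K$, and combine the $(p-1)$-homogeneous rescalings $c_k\psi_{n_k}$, each of which still satisfies $\Delta_p v=\lambda v^{p-1}$, into one function that grows without bound in $k$.

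The main obstacle is this last stacking. Because $\Delta_p$ is nonlinear, sums and superpositions of supersolutions need not be supersolutions, so one cannot naively add the $c_k\psi_{n_k}$'s. I expect the cleanest route is either to define $w$ as a monotone increasing envelope, using that increasing $W^{1,p}_{\loc}$-limits of weak supersolutions remain weak supersolutions, or to run a Perron/obstacle-type iteration at each interface $\partial\Omega_{n_k}$, exploiting the comparison principle to guarantee that the jump of the flux $|\nabla w|^{p-2}\nabla w$ has the correct sign to yield the global weak inequality $\Delta_p w\leq\lambda w^{p-1}$. This is the technical heart of the proof and the place where the hypothesis $(L)$ is fully consumed.
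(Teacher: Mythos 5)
Your handling of $(K)\Rightarrow(L)$ and $(L)\Leftrightarrow(W)$ is correct and essentially the argument the paper relies on (it delegates these to \cite{PRS2} and to Section \ref{sec_WMP}; the rescaled barrier $\eps w$, the truncation $(u-\eta)_+$ glued by the pasting lemma, and the strong maximum principle all appear there in the same roles). The genuine gap is exactly where you place it, in the ``stacking'' step of $(L)\Rightarrow(K)$, and neither of the routes you sketch closes it as stated. The first route fails at the outset: an increasing envelope built from the rescaled pieces $c_k\psi_{n_k}$ would have to be a pointwise maximum or a sum, but for a quasilinear operator it is the \emph{minimum} of two supersolutions that is again a supersolution (Proposition \ref{minsupmaxsub}), not the maximum, and sums are ruled out by nonlinearity; so there is no obvious increasing sequence of supersolutions to pass to the limit with. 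The second route (obstacle problems) is the right one, but the point you miss is that the Liouville hypothesis must be consumed again at \emph{every} stage of the iteration, not only once to show $\psi_\infty=0$.

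Concretely, the paper builds an increasing sequence of continuous supersolutions $w_n$ with $w_n=n$ near infinity and $\norm{w_n}_{L^\infty(\Omega_n)}\le\norm{w_{n-1}}_{L^\infty(\Omega_n)}+\eps\,2^{-n}$. Given $\bar w=w_n$, the next term is obtained by solving the obstacle problems $\K_{\bar w+h_j}$ on $\Omega_{j+1}\setminus K$; the solution is the \emph{smallest} supersolution above $\bar w+h_j$, which is the legitimate substitute for the forbidden maximum. The solutions $s_j$ decrease to a limit $\bar s$, and one must prove $\bar s=\bar w$: otherwise $\max\{\bar s-n,0\}$, and then $\bar s-\bar w$ on $\{\bar s>\bar w\}$, would furnish nonconstant bounded nonnegative subsolutions, contradicting $(L)$. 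Only after this collapse does Dini's theorem give a $\bar\jmath$ with $s_{\bar\jmath}-\bar w<\eps\,2^{-(n+1)}$ on $\Omega_{n+1}$, which is what keeps the limit function finite on compacta while still forcing it to be an exhaustion. Without that argument there is no control at all on the size of the glued function on any fixed compact set, and your construction does not yet produce the required $w$. In short: a correct skeleton, with the load-bearing step left open.
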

Up to some minor changes, the implications $(W)\Leftrightarrow (L)$ and $(K)\Rightarrow (L)$ have been shown in \cite{PRS2}, Theorem A, where it is also proved that, in $(W)$ and $(L)$, $u$ can be equivalently restricted to the class $C^1(M)$. In this respect, see also \cite{PRS1}, Section 2. On the other hand, the second author in \cite{valto} has proved that $(L) \Rightarrow (K)$ when $\lambda=0$. The proof developed in this article covers both the case $\lambda=0$ and $\lambda>0$, is easier and more straightforward and, above all, does not depend on some features which are typical of the $p$-Laplacian.

\section{Definitions and main theorems}\label{sec_deph}

\begin{notaz}
\emph{We set $\R^+=(0,+\infty)$, $\R^+_0=[0,+\infty)$, and $\R^-$, $\R^-_0$ accordingly; for a function $u$ defined on some set $\Omega$, $u^\star= \esssup_\Omega u$ and $u_\star=\essinf_\Omega u$; we will write $K\Subset \Omega$ whenever the set $K$ has compact closure in $\Omega$; $\lip_\loc(M)$ denotes the class of locally Lipschitz functions on $M$; with $u\in\hol_\loc(M)$ we mean that, for every $\Omega \Subset M$, $u \in C^{0,\alpha}(\Omega)$ for some $\alpha \in (0,1]$ possibly depending on $\Omega$. Finally, we will adopt the symbol $Q \doteq \ldots$ to define the quantity $Q$ as $\ldots$.}
\end{notaz}
In order for our techniques to work, we will consider quasilinear operators of the following
form. Let $A: TM \ra TM$ be a Caratheodory map, that is if
$\pi:TM\ra M$ is the bundle projection, $\pi \circ A = \pi$, moreover every representation $\tilde A$ of $A$ in local charts satisfies
\begin{itemize}
 \item $ \tilde A(x,\cdot)$ continuous for a.e. $x\in M$
 \item $\tilde A(\cdot,v)$ measurable for every $v\in \R^m$
\end{itemize}
Note that every continuous bundle map satisfies these assumptions. Furthermore, let $B : M\times \R \rightarrow \R$ be
of Caratheodory type, that is, $B(\cdot,t)$ is measurable for
every fixed $t\in \R$, and $B(x,\cdot)$ is continuous for a.e.
$x\in M$. We shall assume that there exists $p>1$ such that, for
each fixed open set $\Omega \Subset M$, the
following set of assumptions $\Se$ is met:

\begin{align}
&\ps{A(X)}{X} \ge a_1|X|^p \quad \forall \ X\in TM
\tag{A1}\label{A1}
\\[0.1cm]
&|A(X)| \le a_2 |X|^{p-1} \quad \forall X\in TM \tag{A2}
\label{A2}
\\[0.1cm]
& \begin{array}{l} A \text{ is strictly monotone, i.e. } \
\ps{A(X)-A(Y)}{X-Y}_p \ge 0 \ \text{ for} \\[0.1cm]
\text{every } x\in M, \ X,Y \in T_xM, \text{ with equality if and
only if } X=Y \end{array}
\tag{Mo} \label{M}\\[0.1cm]
&|B(x,t)| \le b_1 + b_2|t|^{p-1} \quad \text{for } t\in \R
\tag{B1} \label{B1}
\\[0.1cm]
&\text{for a.e. } x, \ B(x,\cdot) \text{ is monotone non-decreasing} \tag{B2} \label{B2}\\[0.1cm]
&\text{for a.e. } x, \ B(x,t)t \ge 0,
\tag{B3}\label{B3}
\end{align}
where $a_1,a_2,b_1,b_2$ are positive constants possibly depending
on $\Omega$. As explained in remark \ref{rem_B}, we could state our main theorem relaxing condition \ref{B1} to:
\begin{gather}
 |B(x,t)| \le b(t)  \quad \text{for } t\in \R \tag{B1+} \label{B1+}
\end{gather}
for some positive and finite function $b$, however for the moment we assume \ref{B1} to avoid some complications in the notation, and explain later how to extend our result to this more general case.

We define the operators $\oF,\oA,\oB : \wup \ra
\wupst$ by setting
\begin{equation}\label{definABF}
\begin{array}{llcl}
\oA : & u & \longmapsto & \Big[ \phi \in W^{1,p}(\Omega)
\longmapsto \int_\Omega \ps{A(\nabla u)}{\nabla \phi} \Big]
\\[0.5cm]
\oB : & u & \longmapsto & \Big[ \phi \in W^{1,p}(\Omega)
\longmapsto \int_\Omega B(x,u(x))\phi \Big]
\\[0.4cm]
\oF \doteq \oA+\oB. & & &
\end{array}
\end{equation}
With these assumptions, it can be easily verified that both $\oA$ and
$\oB$ map to continuous linear functionals on $W^{1,p}(\Omega)$
for each fixed $\Omega \Subset M$. We define the operators
$L_{\oA}$, $L_{\oF}$ according to the distributional equality:
$$
\int_M \phi L_{\oA}u \doteq - < \oA(u),\phi>, \quad \int_M \phi
L_{\oF}u \doteq - < \oF(u),\phi>
$$
for every $u\in W^{1,p}_\loc(M)$ and $\phi\in C^\infty_c(M)$,
where $<,>$ is the duality. In other words, in the weak sense
$$
L_{\oF} u = \diver(A(\nabla u)) - B(x,u) \qquad \forall \ u\in
W^{1,p}_\loc(M).
$$
\begin{ex}
\emph{The $p$-Laplacian defined in \eqref{defplapl}, corresponding to
the choices $A(X) \doteq |X|^{p-2}X$ and $B(x,t) \doteq 0$, satisfies all
the assumptions in $\Se$ for each $\Omega\Subset M$. Another
admissible choice of $B$ is $B(x,t) \doteq \lambda |t|^{p-2}t$, where
$\lambda \ge 0$. For such a choice,
\begin{equation}\label{plaplaconpote}
L_{\oF}u = \Delta_p u - \lambda|u|^{p-2}u
\end{equation}
is the operator of Theorem \ref{mainteolapla}. We stress that,
however, in $\Se$ we require no homogeneity condition either on
$A$ or on $B$.}
\end{ex}
\begin{ex}\label{ex2}
\emph{More generally, as in \cite{PRS2} and in \cite{PSZ}, for each function $\varphi\in C^0(\R^+_0)$ such that
$\varphi>0$ on $\R^+$, $\varphi(0)=0$, and for each symmetric,
positive definite $2$-covariant continuous tensor field $h \in
\Gamma(\mathrm{Sym}_2(TM))$, we can consider differential
operators of type
$$
L_{\varphi,h}u \doteq \diver \left( \frac{\varphi(|\nabla u|)}{|\nabla
u|} h(\nabla u, \cdot)^\sharp \right),
$$
where $\sharp$ is the musical isomorphism. Due to the continuity
and the strict positivity of $h$, the conditions \eqref{A1} and
\eqref{A2} in $\Se$ can be rephrased as
\begin{equation}\label{doublebound}
a_1t^{p-1} \le \varphi(t) \le a_2t^{p-1}.
\end{equation}
Furthermore, if $\varphi \in C^1(\R^+)$, a sufficient condition
for \eqref{M} to hold is given by
\begin{equation}\label{suffmono}
\frac{\varphi(t)}{t}h(X,X) + \left( \varphi'(t) -
\frac{\varphi(t)}{t}\right) \ps{Y}{X}h(Y,X) >0
\end{equation}
for every $X,Y$ with $|X|=|Y|=1$. The reason why it implies the
strict monotonicity can be briefly justified as follows: for
$L_{\varphi,h}$, \eqref{M} is equivalent to requiring
\begin{equation}\label{monophilapla}
\frac{\varphi(|X|)}{|X|}h(X,X-Y)-\frac{\varphi(|Y|)}{|Y|}h(Y,X-Y)
>0 \qquad \text{if } \ X\neq Y.
\end{equation}
In the nontrivial case when $X$ and $Y$ are not proportional, the
segment $Z(t)=Y+t(X-Y)$, $t\in [0,1]$ does not pass through zero, so that
$$
F(t) = \frac{\varphi(|Z|)}{|Z|}h(Z, Z')
$$
is $C^1$. Condition \eqref{suffmono} implies that $F'(t)>0$.
Hence, integrating we get $F(1)>F(0)$, that is,
\eqref{monophilapla}. We observe that, if $h$ is the metric
tensor, the strict monotonicity is satisfied whenever $\varphi$ is
strictly increasing on $\R^+$ even without any differentiability
assumption on $\varphi$.}
\end{ex}
\begin{ex}
\emph{Even more generally, if $A$ is of class $C^1$, a sufficient
condition for the monotonicity of $A$ has been considered in
\cite{antoninimugnaipucci}, Section 5 (see the proof of Theorem
5.3). Indeed, the authors required that, for every $x\in M$ and
every $X\in T_xM$, the differential of the map $A_x : T_xM\ra
T_xM$ at the point $X\in T_xM$ is positive definite as a linear
endomorphism of $T_X(T_xM)$. This is the analogue, for Riemannian
manifolds, of Proposition 2.4.3 in \cite{pucciserrin}.}
\end{ex}

We recall the concept of subsolutions and supersolutions for
$L_{\oF}$.
\begin{deph}
We say that $u \in W^{1,p}_\loc(M)$ solves $L_{\oF}u \ge 0$ (resp.
$\le 0$, $=0$) weakly on $M$ if, for every non-negative $\phi\in
C^\infty_c(M)$, $< \oF(u), \phi > \le 0$, (resp., $\ge 0$, $=0$). Explicitly, 
$$
\int_M \ps{A(\nabla u)}{\nabla \phi} + \int_M B(x,u)\phi \le 0 \ \ \ \text{(resp., } \ge 0, \, = 0\text{).}
$$
Solutions of $L_{\oF}u \ge 0$ (resp, $\le 0$, $=0$) are called
(weak) subsolutions (resp. supersolutions, solutions) for $L_\oF$.
\end{deph}
\begin{oss}
\emph{When defining solutions of $L_\oF u=0$, we can drop the requirement that the test function $\phi$ is non-negative. This can be easily seen by splitting $\phi$ into its positive and negative parts and using a density argument.}
\end{oss}

\begin{oss}
\emph{Note that, since $B$ is Caratheodory, \eqref{B3} implies that $B(x,0)=0$ a.e. on $M$. Therefore, the constant function $u=0$ solves $L_{\oF}u=0$. Again by $\eqref{B3}$, positive constants are supersolutions.}
\end{oss}

Following \cite{PRS2} and \cite{PRS1}, we present the analogues of the
$L^\infty$-Liouville property and the \Ka property for the
nonlinear operators constructed above.
\begin{deph}
Let $M$ be a Riemannian manifold, and let $\oA, \oB, \oF$ be as
above.
\begin{itemize}
\item[-] We say that the $L^\infty$-Liouville property $(L)$ for
$L^\infty$ (respectively, $\hol_\loc$) functions holds for the
operator $L_{\oF}$ if every $u\in L^\infty(M)\cap
W^{1,p}_{loc}(M)$ (respectively, $\hol_\loc(M) \cap
W^{1,p}_\loc(M)$) essentially bounded, satisfying $u\ge 0$ and
$L_{\oF}u \ge 0$ is constant.
\item[-] We say that the \Ka property $(K)$ holds for $L_{\oF}$
if, for every pair of open sets $K \Subset \Omega \Subset M$ with
Lipschitz boundary, and every $\eps>0$, there exists an exhaustion
function
$$
w \in C^0(M) \cap
W^{1,p}_\loc(M)
$$
such that
$$
\begin{array}{ll}
w>0 \ \text{ on } M\backslash K, & \quad w=0 \ \text{ on } K, \\[0.2cm]
w \le \eps \ \text{ on } \Omega\backslash K, & \quad L_{\oF}w \le
0 \ \text{on } M\backslash K.
\end{array}
$$
A function $w$ with such properties will be called a \Ka potential relative to the triple
$(K,\Omega,\eps)$.
\item[-] a \Ka potential $w$ relative to some triple
$(K,\Omega,\eps)$ is called an Evans potential if $L_{\oF}w=0$ on
$M\backslash K$. The operator $L_{\oF}$ has the Evans property
$(E)$ if there exists an Evans potential for every triple
$(K,\Omega,\eps)$.
\end{itemize}
\end{deph}
%
%
%
The main result in this paper is the following
\begin{teo}\label{mainteo}
Let $M$ be a Riemannian manifold, and let $A,B$ satisfy the set of
assumptions $\Se$, with \eqref{B1+} instead of \eqref{B1}. Define $\oA,\oB,\oF$ as in \eqref{definABF},
and $L_{\oA},L_{\oF}$ accordingly. Then, the conditions $(L)$ for
$\hol_\loc$, $(L)$ for $L^\infty$ and $(K)$ are equivalent.
\end{teo}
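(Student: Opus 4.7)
The plan is a three-way circular chain $(L)_{L^\infty}\Rightarrow (L)_{\hol_\loc}\Rightarrow (K)\Rightarrow (L)_{L^\infty}$. The implication $(L)_{L^\infty}\Rightarrow (L)_{\hol_\loc}$ is obvious. For the reverse, I would invoke the classical De Giorgi--Nash--Moser type interior regularity (Trudinger, Ladyzhenskaya--Ural'tseva) for quasilinear operators satisfying $\Se$: any bounded $W^{1,p}_\loc$ subsolution of $L_\oF$ is automatically in $\hol_\loc$, and the two Liouville formulations therefore coincide.

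For $(K)\Rightarrow (L)_{L^\infty}$ I use a \Ka potential as a comparison barrier. Let $u\in L^\infty(M)\cap W^{1,p}_\loc(M)$ be nonnegative with $L_\oF u\geq 0$, fix a compact $K$ with Lipschitz boundary, and for $\varepsilon>0$ pick a \Ka potential $w$ relative to a triple $(K,\Omega,\varepsilon)$ with $\Omega\Supset K$. Define
\[
v := \sup_K u + w \qquad \text{on } M\setminus K.
\]
Using assumption \eqref{B2} (monotonicity of $B$ in its second argument) and $\sup_K u\geq 0$, one has $L_\oF v=\operatorname{div}(A(\nabla w))-B(x,\sup_K u + w)\leq \operatorname{div}(A(\nabla w))-B(x,w)=L_\oF w\leq 0$, so $v$ remains a supersolution. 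On the precompact annulus $N_t:=\{w<t\}\setminus K$, choose $t$ so large that $\sup_K u+t>\sup_M u$; then $v\geq u$ on $\partial N_t$. The weak comparison principle for $L_\oF$ (a consequence of the strict monotonicity \eqref{M}) yields $u\leq v$ on $N_t$, and hence $u\leq \sup_K u + w$ throughout $M\setminus K$ by exhaustion. In particular $u\leq \sup_K u+\varepsilon$ on $\Omega\setminus K$; since $\varepsilon$ is arbitrary, $\sup_\Omega u=\sup_K u$, and since $\Omega\Supset K$ is arbitrary, $\sup_M u=\sup_K u$ for every admissible $K$. Shrinking $K$ to a small geodesic ball and using the continuity of $u$ from regularity forces $u$ to be constant.

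The delicate direction is $(L)_{\hol_\loc}\Rightarrow (K)$. Given $(K,\Omega,\varepsilon)$, I would fix an exhaustion $\{D_n\}$ of $M$ by precompact open sets with Lipschitz boundary and $\Omega\Subset D_0$, and, via the classical theory of monotone coercive operators under $\Se$ (Browder--Minty, or variational methods), solve
\[
L_\oF h_n=0 \ \text{in } D_n\setminus K,\qquad h_n=1 \ \text{on } \partial K,\qquad h_n=0 \ \text{on } \partial D_n.
\]
Weak comparison gives $0\leq h_n\leq 1$ and $n\mapsto h_n$ monotone non-decreasing; let $h_\infty$ denote its pointwise limit, which by standard stability results for monotone operators still satisfies $L_\oF h_\infty=0$ on $M\setminus K$ with trace $1$ on $\partial K$. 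The Liouville property $(L)_{\hol_\loc}$, applied to an auxiliary function built from $h_\infty$ (for instance by extending to $M$ and using a contradiction argument comparing $h_\infty$ with itself perturbed on $\Omega$), forces $h_\infty\equiv 1$ on $M\setminus K$, and hence $h_n\to 1$ locally uniformly. Extracting a subsequence $\{h_{n_k}\}$ with $1-h_{n_k}\leq \delta_k$ on an exhaustion $\{E_k\}$ for $\delta_k\downarrow 0$ fast enough, one assembles the \Ka potential $w$ from the family $\{1-h_{n_k}\}$, choosing the combination so that $w\leq \varepsilon$ on $\Omega$ and $w\to+\infty$ at infinity in $M$.

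The main obstacle will be the very last step: in the fully nonlinear setting, neither sums nor pointwise suprema of (sub/super)solutions are automatically (sub/super)solutions, so a careful construction exploiting the monotonicity \eqref{M} together with \eqref{B2}--\eqref{B3} is required to verify that the assembled $w$ is a genuine weak supersolution of $L_\oF$ outside $K$. A related subtlety is the gluing across $\partial K$ that lets the family $\{h_n\}$ produce a bounded subsolution on all of $M$; this will make essential use of the Lipschitz regularity of $\partial K$ and the sign of the outward conormal derivative of $h_\infty$ dictated by its boundary data.
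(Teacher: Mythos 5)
Your overall architecture is sensible, and the two easy pieces (the trivial inclusion of function classes, and the barrier argument $(K)\Rightarrow(L)$ via comparison with $\sup_K u + w$ on the sublevel sets of the exhaustion) match the paper's. But there are two genuine gaps. First, the claim that interior regularity makes ``any bounded $W^{1,p}_\loc$ subsolution of $L_\oF$ automatically $\hol_\loc$'' is false: De Giorgi--Nash--Moser type regularity applies to \emph{solutions}, not to one-sided inequalities. Subsolutions only admit upper semicontinuous representatives (cf.\ Theorem \ref{regularity}, part (i)); already for $\Delta$ there exist bounded $W^{1,2}_\loc$ subharmonic functions discontinuous on a dense set. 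Hence the two Liouville formulations do not coincide for free. The paper proves $(L)_{\hol_\loc}\Rightarrow(L)_{L^\infty}$ by a genuine argument: given a non-constant bounded $L^\infty$ subsolution $u$, it traps a solution $w$ of $L_\oF w=0$ (or, in the degenerate case $B(x,u)u\equiv 0$, of a modified equation with potential $B(x,t)+\chi_{\{u<c\}}|t|^{p-2}t$) between $\max\{u-c,0\}$ and a constant via the sub/supersolution method; $w$ is H\"older because it solves an equation, and is shown to be non-constant. The same error infects your $(K)\Rightarrow(L)_{L^\infty}$ step, where ``continuity of $u$ from regularity'' is used both to run the boundary comparison on $\partial N_t$ and to conclude constancy; the robust route is to prove $(K)\Rightarrow(L)_{\hol_\loc}$ (where $u$ is continuous and the Harnack-based strong maximum principle applies) and close the circle through the trapping argument.

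Second, and decisively, the heart of the theorem is exactly the step you leave open: assembling an exhausting supersolution from the family $\{1-h_{n_k}\}$. In the linear case one sums the pieces; here that fails, and flagging that ``a careful construction \dots is required'' is not a proof --- it is the problem the paper was written to solve. The paper's resolution is an inductive obstacle-problem scheme: having built a continuous supersolution $w_n$ with $w_n=0$ on $K$, $w_n\equiv n$ near infinity and $\|w_n\|_{L^\infty(\Omega_n)}\le\|w_{n-1}\|_{L^\infty(\Omega_n)}+\eps 2^{-n}$, one solves the obstacle problems $\K_{w_n+h_j}$ on $\Omega_{j+1}\setminus K$, uses the Liouville property (applied to $\max\{\bar s-n,0\}$, where $\bar s$ is the decreasing limit of the solutions) to show the limit collapses back to $w_n$, and then sets $w_{n+1}=s_{\bar j}$ for $\bar j$ large via Dini's theorem. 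Thus $(L)$ is invoked at \emph{every} step of the induction, not once, and the technical apparatus of Section \ref{sec_tech} (solvability, comparison and regularity for the obstacle problem, Proposition \ref{esoluzione}, the pasting Lemma \ref{pasting} with the spaces $X^p_0$, approximation by continuous supersolutions) is what certifies that the assembled function is a genuine weak supersolution. A smaller but real point: with your normalization ($h_n=1$ on $\partial K$, $h_n=0$ on $\partial D_n$) the glued limit is a \emph{super}solution, to which $(L)$ --- a statement about subsolutions --- does not apply; the paper takes $h_j=0$ on $\partial K$ and $h_j=1$ on $\partial\Omega_j$, so that the decreasing limit, extended by $0$ on $K$, is a bounded non-negative subsolution and $(L)$ forces it to vanish.
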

\begin{oss}
\emph{It should be observed that if $L_{\oF}$ is homogeneous, as
in \eqref{plaplaconpote}, the \Ka condition considerably
simplifies as in $(K)$ of Theorem \ref{mainteolapla}. Indeed, the
fact that $\delta w$ is still a supersolution for every
$\delta>0$, and the continuity of $w$, allow to get rid of
$\Omega$ and $\eps$.}
\end{oss}
Next, in Section \ref{sec_WMP} we briefly describe in which way $(L)$ and $(K)$ are related to the
concepts of weak maximum principle and parabolicity. Such relationship has been deeply investigated in \cite{PRS3}, \cite{PRS2}, whose ideas and proofs we will follow closely. With the aid of Theorem \ref{mainteo}, we will be able to prove the next Theorem \ref{LKW}. To state it, we shall restrict to a particular class of potentials $B(x,t)$, those of the form $B(x,t)=b(x)f(t)$ with
\begin{equation}\label{nuovopot}
\begin{array}{l}
\disp b, b^{-1} \in L^\infty_\loc(M), \quad b > 0 \text{ a.e. on }
M; \\[0.2cm]
f \in C^0(\R), \quad f(0)=0, \quad f \text{ is non-decreasing on }
\R.
\end{array}
\end{equation}
Clearly, $B$ satisfies \eqref{B1+}, \eqref{B2} and \eqref{B3}. As for $A$, we require \eqref{A1} and \eqref{A2}, as before.
\begin{deph}
Let $A,B$ be as above, define $\oA,\oB,\oF$ as in \eqref{definABF}
and $L_{\oA},L_{\oF}$ accordingly.
\begin{itemize}
\item[$(W)$] We say that $b^{-1}L_{\oA}$ satisfies the weak
maximum principle for $C^0$ functions if, for every $u\in
C^0(M)\cap W^{1,p}_\loc(M)$ such that $u^\star<+\infty$, and for
every $\eta<u^\star$,
$$
\inf_{\Omega_\eta} b^{-1}L_{\oA}u \le 0 \qquad \text{weakly on } \
\Omega_\eta = u^{-1}\{(\eta, +\infty)\}.
$$
\item[$(W_\pa)$] We say that $b^{-1}L_{\oA}$ is parabolic if, for
every non-constant $u\in C^0(M)\cap W^{1,p}_\loc(M)$ such that
$u^\star<+\infty$, and for every $\eta<u^\star$,
$$
\inf_{\Omega_\eta} b^{-1}L_{\oA}u < 0 \qquad \text{weakly on } \
\Omega_\eta = u^{-1}\{(\eta, +\infty)\}.
$$
\item[-] We say that $\oF$ is of type $1$ if, in the potential
$B(x,t)$, the factor $f(t)$ satisfies $f>0$ on $\R^+$. Otherwise,
when $f=0$ on some interval $[0,T]$, $\oF$ is called of type $2$.
\end{itemize}
\end{deph}
\begin{oss}
\emph{$\inf_{\Omega_\eta} b^{-1}L_{\oA}u \le 0$ weakly means that,
for every $\eps>0$, there exists $0\le \phi \in
C^\infty_c(\Omega_\eta)$, $\phi \not\equiv 0$ such that
$$
- <\oA(u), \phi> \  <  \eps \int b \phi.
$$
Similarly, with $\inf_{\Omega_\eta} b^{-1}L_{\oA}u < 0$ weakly we
mean that there exist $\eps>0$ and $0\le \phi\in
C^\infty_c(\Omega_\eta)$, $\phi \not\equiv 0$ such that $-
<\oA(u), \phi> \ < -\eps \int b \phi$. }
\end{oss}
\begin{teo}\label{LKW}
Under the assumptions \eqref{nuovopot} for $B(x,t)=b(x)f(t)$, and \eqref{A1}, \eqref{A2} for $A$, the following properties are equivalent:
\begin{itemize}
\item[-] The operator $b^{-1}L_{\oA}$ satisfies $(W)$;
\item[-] Property $(L)$ holds for some (hence any) operator $\oF$
of type 1;
\item[-] Property $(K)$ holds for some (hence any) operator $\oF$
of type 1;
\end{itemize}
Furthermore, under the same assumptions, the next equivalence holds:
\begin{itemize}
\item[-] The operator $b^{-1}L_{\oA}$ is parabolic;
\item[-] Property $(L)$ holds for some (hence any) operator $\oF$
of type 2;
\item[-] Property $(K)$ holds for some (hence any) operator $\oF$
of type 2;
\end{itemize}

\end{teo}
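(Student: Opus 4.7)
The plan is to exploit Theorem \ref{mainteo}, which already supplies $(L)_\oF \Leftrightarrow (K)_\oF$ for every admissible $\oF$, and thereby reduce both blocks of the statement to proving $(W) \Leftrightarrow (L)_\oF$ for every $\oF$ of type 1 and $(W_\pa) \Leftrightarrow (L)_\oF$ for every $\oF$ of type 2 (from which the ``some $\Leftrightarrow$ any'' statements follow automatically). The easy directions just plug into the definitions. For type 1, a non-constant, non-negative, bounded $W^{1,p}_\loc$ subsolution $u$ of $L_\oF u \ge 0$ satisfies $L_\oA u \ge bf(u)$ weakly, so for any $\eta \in (0, u^\star)$ the monotonicity of $f$ gives $f(u) \ge f(\eta) > 0$ on $\Omega_\eta$; consequently $-\langle \oA(u), \phi\rangle \ge f(\eta)\int b\phi$ for all admissible $\phi$, which forces $\inf_{\Omega_\eta} b^{-1} L_\oA u \ge f(\eta) > 0$ in contradiction with $(W)$. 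For type 2 the argument is even simpler: $f \ge 0$ on $\R^+_0$ gives $-\langle \oA(u), \phi\rangle \ge 0$ for every $\phi \ge 0$, which directly rules out the strict inequality required by $(W_\pa)$.

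For the converse directions I argue by contradiction, manufacturing a witness to the failure of $(L)_\oF$ from a witness to the failure of $(W)$ (resp.\ $(W_\pa)$). Failure of $(W)$ at a non-constant $u \in C^0 \cap W^{1,p}_\loc$ with $u^\star < \infty$ furnishes $\eta_0 < u^\star$ and $\eps_0 > 0$ such that $L_\oA u \ge \eps_0 b$ weakly on $\Omega_{\eta_0}$; failure of $(W_\pa)$ furnishes the analogue with $\eps_0 = 0$. Since the inequality persists on the nested sets $\Omega_\eta \subset \Omega_{\eta_0}$ for $\eta \ge \eta_0$, I am free to push $\eta$ as close to $u^\star$ as needed: in type 1 choose $\eta$ with $f(u^\star - \eta) < \eps_0$, available by continuity of $f$ at $0$; in type 2 choose $\eta$ with $u^\star - \eta < T$, where $[0, T]$ is the interval on which $f$ vanishes. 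Then $v \doteq (u - \eta)^+$ is continuous, $W^{1,p}_\loc$, non-negative and bounded by $u^\star - \eta$; it is non-constant because if $\eta \ge u_\star$ then $v = 0$ on $\{u \le \eta\} \neq \emptyset$ and $v > 0$ on $\Omega_\eta \neq \emptyset$, while if $\eta < u_\star$ then $v = u - \eta$ is a translate of the non-constant function $u$.

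The main technical obstacle is a truncation lemma asserting that $L_\oA u \ge \eps_0 b$ weakly on $\Omega_\eta$ (resp.\ $\ge 0$) implies $L_\oA v \ge \eps_0 b\, \chi_{\{v > 0\}}$ (resp.\ $\ge 0$) weakly on $M$. I would prove it by testing the weak inequality for $u$ against the admissible cut-offs $\phi \cdot \min(v/\delta, 1)$ with $\phi \in C^\infty_c(M)$, $\phi \ge 0$ (these are compactly supported in $\Omega_\eta$ since $v = 0$ off $\Omega_\eta$), and letting $\delta \to 0^+$, using \eqref{A2} and dominated convergence to pass to the limit in the gradient term. Given this lemma, the construction of $\eta$ ensures $f(v) < \eps_0$ on $\{v > 0\}$ in type 1 and $f(v) \equiv 0$ in type 2, so that $L_\oF v = L_\oA v - b f(v) \ge 0$ weakly on $M$; but then $v$ violates $(L)_\oF$, the desired contradiction. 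Handling this truncation carefully in the non-homogeneous, non-smooth setting is the only delicate point of the argument; once that lemma is in place, the proof is a direct bookkeeping exercise.
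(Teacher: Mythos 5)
Your proposal is correct and, at the structural level, coincides with the paper's proof: Theorem \ref{mainteo} handles $(L)\Leftrightarrow(K)$, and the two blocks reduce to $(W)\Leftrightarrow(L)$ for type 1 and $(W_\pa)\Leftrightarrow(L)$ for type 2 (the paper's Propositions \ref{primaprop} and \ref{secondaprop}), with exactly the same witness $v=(u-\eta)^+$ and the same calibration of $\eta$ ($f(u^\star-\eta)<\eps_0$ for type 1, $u^\star-\eta<T$ for type 2). The genuine difference is how the truncation step is justified. The paper does not prove a bespoke truncation lemma: it first observes that $L_{\oA}(u-\eta)\ge \eps_0 b\ge b f(u-\eta)$ makes $u-\eta$ an $L_\oF$-subsolution on $\Omega_\eta$, and then invokes the subsolution version of the pasting Lemma \ref{pasting} (with the zero function on $M$) to extend $\max\{u-\eta,0\}$ globally; it also packages the easy implications into the auxiliary characterization $(P)$ of $(W)$ (Lemma \ref{lemwmp}), which you bypass by testing directly — harmless. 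Your direct proof via the cut-offs $\phi\min(v/\delta,1)$ does work: their supports are compactly contained in $\Omega_{\eta_0}$ since $\overline{\{u>\eta\}}\subset\{u\ge\eta\}\subset\Omega_{\eta_0}$ for $\eta>\eta_0$, the extra term $-\delta^{-1}\int\phi\ps{A(\nabla u)}{\nabla u}$ has the favourable sign by \eqref{A1}, and $A(0)=0$ from \eqref{A2} kills the contribution of $\{v=0\}$ when identifying the limit with $-<\oA(v),\phi>$. This buys a more elementary, self-contained argument using only \eqref{A1}--\eqref{A2}, whereas the pasting lemma rests on the obstacle problem and hence on \eqref{M} (which is needed anyway for Theorem \ref{mainteo}). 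One small point to make explicit: $(W)$ and $(W_\pa)$ are formulated for $C^0$ functions, so in the forward directions you should take the Liouville candidate $u$ in the $\hol_\loc$ class; this costs nothing because Theorem \ref{mainteo} identifies $(L)$ for $L^\infty$ with $(L)$ for $\hol_\loc$, and the paper proceeds in the same way.
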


In the final Section \ref{sec_Evans}, we address the question whether $(W)$, $(K)$, $(L)$ are equivalent to the Evans property $(E)$. Indeed, it should be observed that, in Theorem \ref{mainteo}, no growth control on $B$ as a function of $t$ is required at all. On the contrary, as we will see, the validity of the Evans property forces some precise upper bound for its growth. To better grasp what we shall expect, we will restrict to the case of radially symmetric manifolds. For the statements of the main results, we refer the reader directly to Section \ref{sec_Evans} covering the situation.

\section{Technical tools}\label{sec_tech}

In this section we introduce some technical tools, such as the
obstacle problem, that will be crucial to the proof of our main theorems. In doing so, a number of basic results from literature is recalled. We have decided to add a full proof to those results for which we have not found any reference convering the situation at hand. Our aim is to keep the paper basically self-contained, and to give the non-expert reader interested in this topic a brief overview also of the standard technical tricks. Throughout this section, we will always assume that the
assumptions in $\Se$ are satisfied, if not explicitly stated.
First, we state some basic results on subsolutions-supersolutions such as the comparison principle, which follows from the
monotonicity of $A$ and $B$.
\begin{prop}\label{teo_comp}
Assume $w$ and $s$ are a super and a subsolution defined on
$\Omega$. If $\min\{w-s,0\}\in \wupz$, then $w\geq s$ a.e. in
$\Omega$.
\end{prop}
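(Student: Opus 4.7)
The natural plan is to exploit the monotonicity properties of $A$ and $B$ by testing both the super- and the subsolution inequalities against the same non-negative function, the obvious candidate being $\phi \doteq (w-s)^- = -\min\{w-s,0\}$, which by hypothesis lies in $W^{1,p}_0(\Omega)$. A preliminary step is to enlarge the admissible class of test functions from $C^\infty_c(\Omega)$ to non-negative elements of $W^{1,p}_0(\Omega)$: the growth bounds \eqref{A2} and \eqref{B1} make $\oF$ a continuous operator from $W^{1,p}(\Omega)$ to its dual, so the weak inequalities pass to this larger class by density.

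Once $\phi$ is admissible, the plan is to subtract the subsolution inequality for $s$ from the supersolution inequality for $w$, both tested against $\phi$, to obtain
$$
\int_\Omega \ps{A(\nabla w) - A(\nabla s)}{\nabla \phi} \; + \; \int_\Omega \bigl[B(x,w) - B(x,s)\bigr]\phi \;\geq\; 0.
$$
Using the standard identity $\nabla \phi = (\nabla s - \nabla w)\chi_{\{w<s\}}$, the first integrand rewrites as $-\ps{A(\nabla w) - A(\nabla s)}{\nabla w - \nabla s}\chi_{\{w<s\}}$, which is non-positive by \eqref{M}. On $\{w<s\}$ we also have $\phi>0$ together with $B(x,w) - B(x,s) \leq 0$ in view of \eqref{B2}, so the second integrand is non-positive as well. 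A non-positive quantity that is also non-negative must vanish, hence \emph{both} integrands are zero almost everywhere on $\{w<s\}$.

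The final step invokes the strict part of \eqref{M}: the pointwise vanishing of $\ps{A(\nabla w) - A(\nabla s)}{\nabla w - \nabla s}$ a.e. on $\{w<s\}$ forces $\nabla w = \nabla s$ a.e. there, whence $\nabla \phi = 0$ a.e. in $\Omega$. Since $\phi \in W^{1,p}_0(\Omega)$ has zero weak gradient, a standard Sobolev-space fact (extend $\phi$ by zero to $M$ and use the connectedness of $M$, or approximate in $W^{1,p}_0$ by $C^\infty_c(\Omega)$ functions) gives $\phi \equiv 0$, i.e., $(w-s)^- = 0$ a.e. in $\Omega$, which is the claimed inequality $w \geq s$.

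The argument presents no serious obstacle: the only technical issue is the extension of the test-function class, which is where the growth bounds \eqref{A2} and \eqref{B1} enter; everything else is a direct orchestration of the monotonicity hypotheses \eqref{M} and \eqref{B2}. No truncation, Caccioppoli-type estimate, or iteration is needed, and it is worth noting that the \emph{strict} part of (Mo) is genuinely used precisely at the step where vanishing of the $A$-integrand is converted into $\nabla w = \nabla s$ on the bad set.
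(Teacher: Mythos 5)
Your argument is correct and is precisely the standard comparison-principle proof that the paper does not reproduce but delegates to \cite{antoninimugnaipucci}, Theorem 4.1; it also mirrors, step for step, the paper's own proof of the analogous obstacle-problem comparison (Proposition \ref{compaosta}): test both inequalities with $(s-w)_+\in\wupz$, add, use \eqref{M} and \eqref{B2} to force both integrands to vanish, and then use strict monotonicity plus the Poincar\'e inequality (or the ``constant in $\wupz$'' contradiction) to conclude. The only implicit point, which the paper shares, is that $\Omega\Subset M$ so that a nonzero constant cannot lie in $\wupz$.
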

\begin{proof}
 This theorem and its proof, which follows quite easily using the right test function in the definition of supersolution, are standard in potential theory. For a detailed proof see \cite{antoninimugnaipucci}, Theorem 4.1.
\end{proof}
Next, we observe that $A$, $B$ satisfy all the assumptions for the
subsolution-supersolution method in \cite{kuratake} to be
applicable.

\begin{teo}[\cite{kuratake}, Theorems 4.1, 4.4 and 4.7]\label{subsuper}
Let $\phi_1,\phi_2 \in L^\infty_\loc \cap W^{1,p}_\loc$ be, respectively, a subsolution and a supersolution for $L_{\oF}$ on $M$, and suppose that $\phi_1\le \phi_2$ a.e. on $M$. Then, there is a solution $u\in L^\infty_\loc\cap W^{1,p}_\loc$ of $L_{\oF}u=0$ satisfying $\phi_1\le u\le \phi_2$ a.e. on $M$. 
\end{teo}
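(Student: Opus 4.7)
The plan is to construct the solution by an exhaustion-plus-obstacle-problem strategy, which is the standard route behind the cited Kura-Take theorems. First I would fix an exhaustion $\{\Omega_n\} \Subset M$ of precompact open sets with Lipschitz boundary and consider, on each $\Omega_n$, the double obstacle variational inequality: find $u_n \in W^{1,p}(\Omega_n)$ with $\phi_1 \le u_n \le \phi_2$ a.e., with boundary datum $u_n - \phi_2 \in W^{1,p}_0(\Omega_n)$ (say), such that
\begin{equation*}
\langle \oF(u_n), v - u_n \rangle \ge 0 \quad \text{for every admissible } v \text{ with } \phi_1 \le v \le \phi_2.
\end{equation*}
Existence of $u_n$ on the closed convex admissible set follows from the classical Browder-Minty-Stampacchia theorem for variational inequalities driven by monotone, coercive, hemicontinuous operators, whose hypotheses are precisely what $\Se$ guarantees: coercivity from \eqref{A1}, growth from \eqref{A2} and \eqref{B1}, strict monotonicity from \eqref{M} and \eqref{B2}, and continuity in the second variable of both $A$ and $B$.

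Next, I would upgrade $u_n$ from an obstacle solution to an unconstrained solution of $L_{\oF} u_n = 0$ on $\Omega_n$. The standard device is to test with $v = u_n \pm \epsilon \psi$: taking $\psi \ge 0$ compactly supported in $\{u_n < \phi_2\}$ yields $\langle \oF(u_n), \psi \rangle \ge 0$ there (so $u_n$ is a local supersolution), and symmetrically $\langle \oF(u_n), \psi \rangle \le 0$ on $\{u_n > \phi_1\}$. On the coincidence sets one transfers the sub/supersolution property of $\phi_1$, $\phi_2$ to $u_n$ by a truncation argument, which combined with the monotonicity of $A$ and $B$ and the hypothesis $\phi_1 \le \phi_2$ yields $L_{\oF} u_n = 0$ weakly on $\Omega_n$.

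Third, I would pass to the limit $n \to \infty$. The pointwise sandwich $\phi_1 \le u_n \le \phi_2$ gives local $L^\infty$ bounds; inserting $(u_n - \phi_1)\zeta^p$ for a cutoff $\zeta$ into the equation and using \eqref{A1} and the growth \eqref{B1} produces a Caccioppoli-type $W^{1,p}_\loc$ estimate uniform in $n$. Extracting a weakly convergent subsequence $u_n \rightharpoonup u$ in $W^{1,p}_\loc$ and strongly in $L^p_\loc$, the limit inherits $\phi_1 \le u \le \phi_2$, and a diagonal argument delivers a single $u$ defined on all of $M$.

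The main obstacle I anticipate is the final identification of $u$ as a weak solution, since the nonlinear term $A(\nabla u_n)$ does not pass to the limit under mere weak convergence of gradients. Here I would invoke Minty's monotonicity trick, or equivalently the Leray-Lions argument: one first shows
\begin{equation*}
\int \langle A(\nabla u_n) - A(\nabla u), \nabla u_n - \nabla u \rangle \longrightarrow 0
\end{equation*}
by testing with a suitable cutoff of $u_n - u$ and exploiting the strong $L^p_\loc$ convergence of $u_n$ together with the Caratheodory continuity of $B$. The strict monotonicity \eqref{M} then forces $\nabla u_n \to \nabla u$ almost everywhere on compact subsets, whence $A(\nabla u_n) \to A(\nabla u)$ in the appropriate Lebesgue sense and the equation $L_{\oF} u = 0$ is preserved in the limit. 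Making this step rigorous under the minimal Caratheodory hypotheses on $A$ and $B$ is the technical heart of the argument, and is precisely what Kura and Take execute in their Theorems 4.1, 4.4, and 4.7.
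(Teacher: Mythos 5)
The paper offers no proof of this statement: it is imported wholesale from \cite{kuratake} (Theorems 4.1, 4.4 and 4.7), the only thing verified in the text being that the structure conditions $\Se$ match the hypotheses of that reference. Your exhaustion--plus--obstacle construction is therefore a genuinely different, essentially self-contained route, and a natural one here, since every tool you invoke is already set up in Section \ref{sec_tech}: solvability of obstacle and Dirichlet problems on relatively compact sets (Theorem \ref{existostacolo}, whose Stampacchia argument carries over verbatim to the convex set $\{f:\ \phi_1\le f\le \phi_2 \text{ a.e.},\ f-\phi_2\in W^{1,p}_0(\Omega_n)\}$), the comparison principle (Proposition \ref{teo_comp}), the uniform Caccioppoli bound for locally bounded supersolutions (Proposition \ref{caccio}), and the Browder-lemma limit argument for locally bounded sequences of solutions (last part of Proposition \ref{convergence}). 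What the citation buys is brevity; what your route buys is independence from an external reference, at the cost of the repair below.

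The one step that does not survive as written is the passage from the double-obstacle solution $u_n$ to a solution of $L_\oF u_n=0$. Testing with $\psi\ge 0$ ``compactly supported in $\{u_n<\phi_2\}$'' presupposes that this set is open, but $\phi_1,\phi_2$ are merely in $L^\infty_\loc\cap W^{1,p}_\loc$, so the set is defined only up to null sets and the admissibility of $u_n+\eps\psi$ is unclear (contrast Proposition \ref{esoluzione}, which requires a \emph{continuous} obstacle); the ``truncation argument on the coincidence sets'' is equally problematic for measurable obstacles. The clean repair is pure comparison, and it shows the obstacles are in fact superfluous. Let $v_n$ solve the unconstrained Dirichlet problem on $\Omega_n$ with Sobolev boundary datum $\phi_2$ (Theorem \ref{existostacolo} with $\psi=-\infty$). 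From $\phi_1\le\phi_2$ one gets $0\ge \min\{v_n-\phi_1,0\}\ge \min\{v_n-\phi_2,0\}\in W^{1,p}_0(\Omega_n)$, hence $\min\{v_n-\phi_1,0\}\in W^{1,p}_0(\Omega_n)$ by the standard squeeze lemma (\cite{HKM}, Lemma 1.25, already used in Lemma \ref{pasting}); Proposition \ref{teo_comp} applied to the supersolution $v_n$ and the subsolution $\phi_1$ then gives $v_n\ge\phi_1$, and the symmetric application to $\phi_2$ and $v_n$ gives $v_n\le\phi_2$. Thus $v_n$ is admissible for your variational inequality and satisfies it with equality; adding $<\oF(u_n),v_n-u_n>\ \ge 0$ to $<\oF(v_n),u_n-v_n>\ =0$ and arguing as in Proposition \ref{compaosta} forces $u_n=v_n$. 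Beware that $u_n$ is \emph{not} automatically a supersolution in the double-obstacle setting (upward perturbations violate the upper obstacle), so one cannot shortcut this via Corollary \ref{corsuper}. With $u_n=v_n$ in hand, the sandwich gives local $L^\infty$ bounds, Proposition \ref{caccio} gives $W^{1,p}_\loc$ bounds, and a diagonal subsequence converging a.e.\ and weakly in $W^{1,p}_\loc$ falls squarely under the last part of Proposition \ref{convergence}, which performs exactly your Minty--Browder step and yields $L_\oF u=0$ with $\phi_1\le u\le\phi_2$.
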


A fundamental property is the strong maximum principle, which
follows from the next Harnack inequality
\begin{teo}[\cite{pucciserrin}, Theorems 7.1.2, 7.2.1 and
7.4.1]\label{harnack}
Let $u\in W^{1,p}_\loc(M)$ be a non-negative solution of $L_{\oA}u
\le 0$. Let the assumptions in $\Se$ be satisfied. Fix a
relatively compact open set $\Omega \Subset M$.
\begin{itemize}
\item[(i)] Suppose that $1<p\le m$, where $m=\mathrm{dim}M$. Then,
for every ball $B_{4R} \subset \Omega$ and for every $s \in (0,
(p-1)m/(m-p))$, there exists a constant $C$ depending on $R$, on the
geometry of $B_{4R}$, on $m$ and on the parameters $a_1,a_2$ in
$\Se$ such that
$$
\norm{u}_{L^s(B_{2R})} \le C \Big(\essinf_{B_{2R}}u\Big).
$$
\item[(i)] Suppose that $p>m$. Then, for every ball $B_{4R}
\subset \Omega$, there exists a constant $C$ depending on $R$, on
the geometry of $B_{4R}$, on $m$ and on the parameters $a_1,a_2$
in $\Se$ such that
$$
\esssup_{B_R} u \le C \Big(\essinf_{B_R}u\Big).
$$
\end{itemize}
In particular, for every $p>1$, each non-negative solution $u$ of
$L_{\oA}u \le 0$ on $M$ is such that either $u=0$ on $M$ or
$\essinf_\Omega u >0$ for every relatively compact set $\Omega$.
\end{teo}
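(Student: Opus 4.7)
The Harnack estimates in parts (i) and (ii) are direct specializations of the Moser-type iteration results of Pucci--Serrin's Chapter 7, and the plan is essentially to verify that the structural conditions \eqref{A1} and \eqref{A2} of $\Se$ match the hypotheses used there. Those give the two-sided bounds $a_1|X|^p \le \ps{A(X)}{X}$ and $|A(X)| \le a_2|X|^{p-1}$, which are precisely the $p$-Laplacian type structure needed to run Moser iteration on $u^q$ (for the $L^s$ bound on supersolutions) and on $-\log u$ (for the reverse bound), by testing the weak formulation of $L_{\oA} u \le 0$ against cut-off functions supported in $B_{4R}$. The only Riemannian inputs are local volume doubling and a Poincaré inequality on $B_{4R}$, both of which are standard on relatively compact sets and are absorbed in the dependence of $C$ on the ``geometry of $B_{4R}$''.

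For the ``in particular'' strong minimum principle I would use a connectedness argument. First I would pass to a continuous representative of $u$, using the fact that a non-negative $L_{\oA}$-supersolution satisfying \eqref{A1}--\eqref{A2} is locally Hölder continuous (classical interior regularity for quasilinear equations, e.g.\ Tolksdorf/DiBenedetto). Set $Z = \{x \in M : u(x) = 0\}$, which is closed by continuity. To show $Z$ is open, fix $x_0 \in Z$ and pick $R>0$ so small that $B_{4R}(x_0) \Subset M$. In the subcritical case $p \le m$, apply (i) with any $s \in (0,(p-1)m/(m-p))$: since $x_0 \in B_{2R}$ and $u(x_0)=0$,
\[
\norm{u}_{L^s(B_{2R})} \le C \essinf_{B_{2R}} u = 0,
\]
so $u \equiv 0$ on $B_{2R}(x_0)$. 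In the case $p > m$, (ii) yields $\esssup_{B_R} u \le C \essinf_{B_R} u = 0$ in the same way. Either way $Z$ is open, and connectedness of $M$ forces $Z = M$ or $Z = \emptyset$; in the latter case the Harnack bound on any ball immediately gives $\essinf_\Omega u > 0$ for every $\Omega \Subset M$.

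The main obstacle I expect is not in the logical structure but in the bookkeeping: the Pucci--Serrin estimates are written in Euclidean charts, so to transplant them to $M$ one needs to check that the relevant Sobolev/Poincaré constants behave well on $B_{4R} \Subset M$, which is automatic for small $R$ via a normal coordinate comparison. The bridge between the ``essential'' statements of the Harnack inequality and the pointwise vanishing at $x_0$ requires the continuity passage above; alternatively one could state and prove the entire strong minimum principle in the $\essinf$ sense on balls, which keeps the argument self-contained and avoids invoking the full regularity theory.
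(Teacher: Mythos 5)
This theorem is not proved in the paper: parts (i) and (ii) are quoted verbatim from Pucci--Serrin, and the paper's only added content is the remark checking that the structural functions $\bar{a}_2,\bar{a},b_1,b_2,b$ there can be taken to vanish under \eqref{A1}--\eqref{A2}, so that $k(R)=0$ and the Harnack inequality is homogeneous (which is exactly the verification you sketch). Your derivation of the ``in particular'' part by an open-closed chaining argument is the standard one and is what the paper implicitly relies on. One correction: your claim that a non-negative supersolution of $L_{\oA}u\le 0$ is locally H\"older continuous is false in general -- interior H\"older (or $C^{1,\alpha}$) regularity \`a la Tolksdorf/DiBenedetto applies to \emph{solutions}, while supersolutions only admit a lower semicontinuous representative (this is precisely Theorem \ref{regularity}(i) of the paper). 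This does not sink the argument, because the alternative you yourself propose -- defining $Z$ as the set of points $x$ with $\essinf_{B_r(x)}u=0$ for small $r$, showing $Z$ and its complement are open directly from the Harnack inequalities, and covering $\overline{\Omega}$ by finitely many balls in the case $Z=\emptyset$ -- is the correct and self-contained route; it should be taken as the actual proof rather than a fallback.
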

\begin{oss}
\emph{We spend few words to comment on the Harnack inequalities
quoted from \cite{pucciserrin}. In our assumptions $\Se$, the
functions $\bar{a}_2, \bar{a}, b_1,b_2,b$ in Chapter 7, (7.1.1)
and (7.1.2) and the function $a$ in the monotonicity inequality
(6.1.2) can be chosen to be identically zero. Thus, in Theorems
7.1.2 and 7.4.1 the quantity $k(R)$ is zero. This gives no
non-homogeneous term in the Harnack inequality, which is essential
for us. For this reason, we cannot weaken \eqref{A2} to
$$
|A(X)|\le a_2|X|^{p-1} + \bar{a}
$$
locally on $\Omega$, since the presence of non-zero $\bar{a}$
implies that $k(R)>0$. It should be observed that Theorem 7.1.2 is
only stated for $1<p<m$ but, as observed at the beginning of
Section 7.4, the proof can be adapted to cover the case $p=m$.}
\end{oss}
\begin{oss}\label{ossfilapla}
\emph{In the rest of the paper, we will only use the fact that
either $u\equiv 0$ or $u>0$ on $M$, that is, the strong maximum
principle. It is worth observing that, for the operators $L_{\oA}=L_{\varphi,h}$
described in Example \ref{ex2}, very general strong maximum
principles for $C^1$ or $\lip_\loc$ solutions of $L_{\varphi,h}u
\le 0$ on Riemannian manifolds have been obtained in
\cite{puccirigoliserrin} (see Theorem 1.2 when $h$ is the metric
tensor, and Theorems 5.4 and 5.6 for the general case). In
particular, if $h$ is the metric tensor, the sole requirements
\begin{equation}\label{ipophi}
\varphi \in C^0(\R^+_0), \ \ \varphi(0)=0, \ \ \varphi >0 \text{
on } \R^+, \ \  \varphi \text{ in strictly increasing on } \R^+
\end{equation}
are enough for the strong maximum principle to hold for $C^1$
solutions of $L_\varphi u \le 0$. Hence, for instance for
$L_\varphi$, the two-sided bound \eqref{doublebound} on $\varphi$
can be weakened to any bound ensuring that the comparison and
strong maximum principles hold, the subsoluton-supersolution
method is applicable and the obstacle problem has a solution. For
instance, besides \eqref{ipophi}, the requirement
\begin{equation}\label{buonefi}
\varphi(0)=0, \quad a_1 t^{p-1} \le \varphi(t) \le a_2t^{p-1} +
a_3
\end{equation}
is enough for Theorems, \ref{teo_comp}, \ref{subsuper}, and it
also suffices for the obstacle problem to admit a unique solution,
as the reader can infer from the proof of the next Theorem
\ref{existostacolo}.}
\end{oss}
\begin{oss}\label{ossregularity}
\emph{Regarding the above observation, if $\varphi$ is merely
continuous then even solutions of $L_\varphi u=0$ are not expected
to be $C^1$, nor even $\lip_\loc$. Indeed, in our assumptions the
optimal regularity for $u$ is (locally) some H\"older class, see
the next Theorem \ref{regularity}. If $\varphi \in C^1(\R^+)$ is
more regular, then we can avail of the regularity result in
\cite{tolksdorf} to go even beyond the $C^1$ class. Indeed, under
the assumptions
$$
\gamma(k+ t)^{p-2} \le
\min\left(\varphi'(t),\frac{\varphi(t)}{t}\right) \le
\max\left(\varphi'(t),\frac{\varphi(t)}{t}\right) \le
\Gamma(k+t)^{p-2},
$$
for some $k\ge 0$ and some positive constants $\gamma\le \Gamma$,
then each solution of $L_\varphi u=0$ is in some class
$C^{1,\alpha}$ on each relatively compact set $\Omega$, where
$\alpha \in (0,1)$ may depend on $\Omega$. When $h$ is not the
metric tensor, the condition on $\varphi$ and $h$ is more
complicated, and we refer the reader to \cite{PRS2} (in
particular, see (0.1) $(v)$, $(vi)$ p. 803).}
\end{oss}
Part of the regularity properties that we need are summarized in the following
\begin{teo}\label{regularity}
Let the assumptions in $\Se$ be satisfied.
\begin{itemize}
\item[(i)] [\cite{ZM}, Theorem 4.8] If $u$ solves $L_\oF u \le 0$
on some open set $\Omega$, then there exists a representative in
$\wup$ which is lower semicontinuous.
\item[(ii)] [\cite{ladyura}, Theorem 1.1 p. 251] If $u\in
L^\infty(\Omega) \cap \wup$ is a bounded solution of $L_{\oF}u=0$
on $\Omega$, then there exists $\alpha \in (0,1)$ depending on the
geometry of $\Omega$, on the constants in $\Se$ and on
$\|u\|_{L^\infty(\Omega)}$ such that $u\in C^{0,\alpha}(\Omega)$.
Furthermore, for every $\Omega_0\Subset \Omega$, there exists
$C=C(\gamma, \mathrm{dist}(\Omega_0,
\partial \Omega))$ such that
$$
\|u\|_{C^{0,\alpha}(\Omega_0)} \le C.
$$
\end{itemize}
\end{teo}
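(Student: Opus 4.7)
Both items are citations of classical regularity results, so my plan is to verify that the structural assumptions collected in $\Se$ place $L_{\oF}$ within the scope of the cited papers, and then to sketch in broad strokes the arguments those references follow.

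For part (i), I would produce the lower semicontinuous representative via the essential liminf
$$
\tilde u(x) \doteq \lim_{r\to 0^+} \essinf_{B_r(x)} u.
$$
By construction $\tilde u$ is lower semicontinuous: each sublevel set $\{\tilde u \le t\}$ is closed, since $\essinf$ over a shrinking family of balls is lower semicontinuous in the center. The substantive step is then to show $\tilde u = u$ a.e.\ on $\Omega$, and this is precisely the content of a weak Harnack inequality for non-negative supersolutions of $L_{\oF}$. Applied to $v = u - \essinf_{B_{2r}} u \ge 0$, a Moser-type iteration shows that $\essinf_{B_r} u$ converges to $u(x)$ at every Lebesgue point of $u$. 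Assumptions \eqref{A1}--\eqref{A2} provide exactly the ellipticity and growth needed for Moser iteration to close, while \eqref{B1} keeps the lower-order term integrable; this is the route of \cite{ZM}. Alternatively, one can derive the weak Harnack inequality from Theorem \ref{harnack} by a standard chaining/covering argument.

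For part (ii) the De Giorgi--Nash--Moser machine applies. I would test the equation $L_{\oF} u = 0$ with $\phi = \eta^p (u-k)_+$ for smooth cutoffs $\eta$ and truncation levels $k$. Condition \eqref{A1} gives the coercivity to bound $\int \eta^p \abs{\nabla (u-k)_+}^p$ from below, \eqref{A2} controls the resulting cross terms involving $\nabla \eta$, and \eqref{B1} together with the $L^\infty$-bound on $u$ absorbs the potential term into a constant depending only on the data listed in the statement. The resulting Caccioppoli inequality, combined with Sobolev embedding, places $u$ in a De Giorgi class, from which the iteration yields an oscillation decay $\operatorname{osc}_{B_r}u \le \theta \, \operatorname{osc}_{B_{2r}}u$ with $\theta<1$, hence H\"older continuity with an exponent $\alpha = \alpha(\theta) \in (0,1)$ and the explicit local estimate on $\|u\|_{C^{0,\alpha}(\Omega_0)}$.

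The main obstacle is essentially a bookkeeping one: one has to follow the proofs in \cite{ZM} and \cite{ladyura} carefully enough to confirm that none of the constants produced there secretly depend on quantities unavailable in our setting -- in particular, in (ii) we want a constant $C$ depending only on $a_1,a_2,b_1,b_2$, the local geometry, and $\|u\|_{L^\infty(\Omega)}$, not on, say, the modulus of continuity of $A$ in the base variable. This is the same type of audit already carried out in the discussion after Theorem \ref{harnack}, where the exact form of \eqref{A1}--\eqref{A2} was seen to force the non-homogeneous term $k(R)$ of Pucci--Serrin to vanish.
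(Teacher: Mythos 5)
Your proposal is correct and matches the paper's treatment: the paper does not reprove these results but simply cites \cite{ZM} and \cite{ladyura}, noting (as you do in your final ``audit'' paragraph) that under $\Se$ the extraneous terms $b_0$ and $\mathbf{a}$ in the cited statements vanish, so the hypotheses there are met. Your additional sketches of the essential-liminf representative via the weak Harnack inequality and of the De Giorgi--Nash--Moser oscillation decay are accurate summaries of the standard arguments underlying the cited theorems.
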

\begin{oss}
\emph{As for $(i)$, it is worth observing that, in our
assumptions, both $b_0$ and $\textbf a$ in the statement of
\cite{ZM}, Theorem 4.8 are identically zero. Although we will not need the following properties, it is worth noting that
any $u$ solving $L_{\oF}u \le 0$ has a Lebesgue point everywhere and
is also $p$-finely continuous (where finite).}
\end{oss}
Next, this simple elliptic estimate for
locally bounded supersolutions is useful:
\begin{prop}\label{caccio}
Let $u$ be a bounded solution of $L_{\oF}u \le 0$ on $\Omega$. Then, for every
relatively compact, open set $\Omega_0\Subset \Omega$ there is a
constant $C>0$ depending on $p, \, \Omega, \, \Omega_0$ and on the
parameters in $\Se$ such that
$$
\norm{\nabla u}_{L^p(\Omega_0)} \le C(1+\norm{u}_{L^\infty(\Omega)})
$$
\end{prop}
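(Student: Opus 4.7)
The plan is to prove this as a standard Caccioppoli-type inequality, adapted to the inhomogeneous situation caused by the lower order term $B(x,u)$. The key ingredient is to test the supersolution inequality against a carefully chosen non-negative function that lets us absorb $|\nabla u|^p$ on the left-hand side.

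Fix a cutoff $\eta\in C^\infty_c(\Omega)$ with $0\le\eta\le 1$, $\eta\equiv 1$ on $\Omega_0$, and $|\nabla\eta|\le C/\mathrm{dist}(\Omega_0,\partial\Omega)$. Set $M=\|u\|_{L^\infty(\Omega)}$ and use the test function
$$
\phi \doteq \eta^p(M+1-u),
$$
which lies in $W^{1,p}_0$ of a neighborhood of $\mathrm{supp}\,\eta$ (by standard approximation arguments this is admissible in the weak formulation) and is non-negative since $M+1-u\ge 1$. Its gradient is $\nabla\phi=p\eta^{p-1}(M+1-u)\nabla\eta-\eta^p\nabla u$. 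Plugging into $\langle\oF(u),\phi\rangle\ge 0$ and rearranging,
$$
\int \eta^p\langle A(\nabla u),\nabla u\rangle \le p\int \eta^{p-1}(M+1-u)\langle A(\nabla u),\nabla\eta\rangle + \int\eta^p(M+1-u) B(x,u).
$$

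Now I would apply the structural assumptions. On the left, \eqref{A1} gives $\langle A(\nabla u),\nabla u\rangle\ge a_1|\nabla u|^p$. On the right, \eqref{A2} bounds $|A(\nabla u)|\le a_2|\nabla u|^{p-1}$, and \eqref{B1} gives $|B(x,u)|\le b_1+b_2 M^{p-1}$ since $|u|\le M$; also $M+1-u\le 2M+1$. Collecting these and using Young's inequality $a b \le \varepsilon a^{p/(p-1)}+C_\varepsilon b^p$ on the term $\eta^{p-1}|\nabla u|^{p-1}\cdot|\nabla\eta|$, we obtain
$$
a_1 \int \eta^p|\nabla u|^p \le \varepsilon(2M+1)\int \eta^p|\nabla u|^p + C_\varepsilon(2M+1)\int|\nabla\eta|^p + (2M+1)(b_1+b_2 M^{p-1})|\Omega|.
$$

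Choosing $\varepsilon$ so that $\varepsilon(2M+1)\le a_1/2$ lets us absorb the first term on the right into the left; the remaining right-hand side is bounded by $C(1+M)^p$ with $C$ depending only on $p$, $\Omega$, $\Omega_0$, and the parameters $a_1,a_2,b_1,b_2$. Since $\eta\equiv 1$ on $\Omega_0$, this yields
$$
\int_{\Omega_0}|\nabla u|^p \le C(1+\|u\|_{L^\infty(\Omega)})^p,
$$
and taking $p$-th roots gives the claim. There is no real obstacle here; the only subtle point is that $\varepsilon$ must be chosen depending on $M$ to accommodate the inhomogeneity produced by the $B$-term, but this dependence is harmless because the resulting constant in front of $|\nabla\eta|^p$ is still of order $(1+M)^p$, consistent with the stated bound.
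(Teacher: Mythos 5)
Your proof is correct and follows essentially the same route as the paper: test the supersolution inequality with $\eta^p(\text{const}-u)$, use \eqref{A1}, \eqref{A2}, \eqref{B1} and Young's inequality, and absorb the gradient term. The only cosmetic differences are that the paper first normalizes $u\ge 0$ and tests with $\eta^p(u^\star-u)$, and groups the factor $u^\star|\nabla\eta|$ into the second slot of Young's inequality so that $\varepsilon$ is independent of $M$ — but, as you correctly observe, letting $\varepsilon$ depend on $M$ still yields the bound $C(1+M)^p$.
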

\begin{proof}
Given a supersolution $u$, the monotonicity of $B$ assures that for every positive constant $c$ also $u+c$ is a supersolution, so without loss of generality we may assume that $u_\star \ge 0$. Thus, $u^\star=\norm{u}_{L^\infty(\Omega)}$.\\
Shortly, with $\|\cdot \|_p$ we denote the $L^p$ norm on $\Omega$,
and with $C$ we denote a positive constant depending on $p,\Omega$
and on the parameters in $\Se$, that may vary from place to place.
Let $\eta\in C^\infty_c(\Omega)$ be such that $0\le \eta \le 1$ on
$\Omega$ and $\eta=1$ on $\Omega_0$. Then, we use the non-negative
function $\phi= \eta^p(u^\star-u)$ in the definition of
supersolution to get, after some manipulation and from \eqref{A1},
\eqref{A2} and \eqref{B3},
\begin{equation}\label{prima}
\begin{array}{lcl}
\disp a_1\int_\Omega \eta^p|\nabla u|^p & \le & \disp pa_2
\int_\Omega |\nabla u|^{p-1}\eta^{p-1}(u^\star-u)|\nabla \eta| + \int_\Omega \eta^p B(x,u)u^\star \\[0.4cm]
\end{array}
\end{equation}
Using \eqref{B1}, the integral involving $B$ is roughly estimated
as follows:
\begin{equation}\label{seconda}
\int_\Omega \eta^p B(x,u)u^\star \le |\Omega|(b_1u^\star +
b_2(u^\star)^p) \le C(1+ u^\star)^p,
\end{equation}
where the last inequality follows by applying Young inequality on
the first addendum. As for the term involving $|\nabla \eta|$,
using $(u^\star-u)\le u^\star$ and again Young inequality $|ab|
\le |a|^p/(p\eps^p)+ \eps^q|b|^q/q$ we obtain
\begin{equation}\label{terza}
\begin{array}{lcl}
\disp pa_2 \int_\Omega \big(|\nabla
u|^{p-1}\eta^{p-1}(u^\star-u)|\nabla \eta|\big) & \le & \disp pa_2
\int_\Omega \big(|\nabla
u|^{p-1}\eta^{p-1}\big)\big(u^\star|\nabla \eta|\big) \\[0.4cm]
& \le & \frac{a_2}{\eps^p} \norm{\eta \nabla u}_p^p +
\frac{a_2p\eps^q}{q} \norm{\nabla \eta}^p_p(u^\star)^p
\end{array}
\end{equation}
Choosing $\eps$ such that $a_2\eps^{-p}=a_1/2$, inserting
\eqref{seconda} and \eqref{terza} into \eqref{prima} and
rearranging we obtain
$$
\frac{a_1}{2} \norm{\eta\nabla u}_p^p \le C \Big[1+
(1+\norm{\nabla \eta}_p^p)(u^\star)^p \Big].
$$
Since $\eta=1$ on $\Omega_0$ and $\norm{\nabla \eta}_p \le C$,
taking the $p$-root the desired estimate follows.
\end{proof}
\begin{oss}
\emph{We observe that, when $B\neq 0$ we cannot apply the
technique of \cite{HKM}, Lemma 3.27 to get a Caccioppoli-type
inequality for bounded, non-negative supersolutions. The reason is
that subtracting a positive constant to a supersolution does not
yield, for general $B\neq 0$, a supersolution. It should be
stressed that, however, when $p\le m$ a refined Caccioppoli
inequality for supersolution has been given in in \cite{ZM},
Theorem 4.4.}
\end{oss}
Now, we fix our attention on the obstacle problem. There are a lot
of references regarding this subject (for example see
\cite{ZM}, Chapter 5 or \cite{HKM}, Chapter 3 in the case $B =0$).
As often happens, notation can be quite different from one
reference to another. Here we try to adapt the conventions used in
\cite{HKM}, and for the reader's convenience we also sketch some
of the proofs.\\
First of all, some definitions. Given a function $\psi:\Omega\to
\R\cup{\pm\infty}$, and given $\theta\in W^{1,p}(\Omega)$, we
define the closed convex set
\begin{gather*}
 \K_{\psi,\theta} \doteq \{f\in \wup \ \vert \  \ f\geq \psi  \ \text{ a.e. and } \ f-\theta\in
\wupz\}.
\end{gather*}
Loosely speaking, $\theta$ determines the boundary condition for the
solution $u$, while $\psi$ is the ``obstacle''-function. Most of
the times, obstacle and boundary function coincide, and in this
case we use the convention $\K_{\theta}\doteq \K_{\theta,\theta}$.
We say that $u\in \Kpt$ solves the obstacle problem if for every
$\varphi\in \Kpt$:
\begin{gather}\label{eq_obs}
<\oF(u),\varphi-u> \ \geq 0.
\end{gather}
Note that for every nonnegative $\phi\in \cc$ the function $\varphi= u+\phi$
belongs to $\Kpt$, and this implies that the solution to the
obstacle problem is always a supersolution. Note also that if we
choose $\psi=-\infty$, we get the standard Dirichlet problem with
Sobolev boundary value $\theta$ for the operator $\F$, in fact in
this case any test function $\phi\in \cc$ verifies $u\pm
\phi\in \Kpt$, and so inequality in \eqref{eq_obs} becomes an
equality.
Next, we address the solvability of the obstacle problem.
\begin{teo}\label{existostacolo}
Under the assumptions $\Se$, if $\Omega$ is relatively compact and
$\K_{\psi,\theta}$ is nonempty, then there exists a unique
solution to the relative obstacle problem.
\end{teo}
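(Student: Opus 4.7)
The plan is to recast the obstacle problem \eqref{eq_obs} as a variational inequality on the non-empty, closed convex set $\Kpt$ inside the reflexive Banach space $\theta + \wupz$, and to invoke the classical Browder-Minty-Lions existence theorem for monotone, hemicontinuous and coercive operators on convex subsets of a reflexive Banach space. Uniqueness is then immediate from strict monotonicity, while existence reduces to checking the standard abstract hypotheses on $\oF$ from the structural assumptions in $\Se$.

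For existence, I would verify in turn, with $\oF$ viewed as an operator from $\wup$ into $\wupst$: (i) boundedness on bounded sets, a direct consequence of \eqref{A2} and \eqref{B1}; (ii) monotonicity of $\oF = \oA+\oB$, because $\oA$ is monotone by \eqref{M} and $\oB$ by \eqref{B2}; (iii) hemicontinuity, i.e.\ continuity of $t\mapsto \langle \oF(u+tv), w\rangle$, which follows from the Carath\'eodory property of $A,B$ together with dominated convergence, the majorants being supplied by \eqref{A2} and \eqref{B1}; (iv) coercivity on $\Kpt$: picking any $u_0 \in \Kpt$ (non-empty by assumption) and splitting
\begin{align*}
\langle \oF(u), u-u_0\rangle &\ge a_1 \norm{\nabla u}_p^p - a_2 \norm{\nabla u}_p^{p-1}\norm{\nabla u_0}_p \\
&\quad + \int_\Omega B(x,u)\, u - \int_\Omega B(x,u)\, u_0,
\end{align*}
one discards $\int_\Omega B(x,u)u \ge 0$ by \eqref{B3}, absorbs the remaining cross term using \eqref{B1} together with Young's inequality, and then applies the Poincar\'e inequality on the relatively compact $\Omega$ to $u-u_0 \in \wupz$. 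The Browder-Minty-Lions theorem then produces a solution $u\in\Kpt$ of \eqref{eq_obs}.

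Uniqueness is the easy part: if $u_1, u_2 \in \Kpt$ both solve \eqref{eq_obs}, testing the inequality for $u_i$ against $u_{3-i}$ and summing yields
$$
\int_\Omega \ps{A(\nabla u_1)-A(\nabla u_2)}{\nabla u_1-\nabla u_2} + \int_\Omega \ton{B(x,u_1)-B(x,u_2)}(u_1-u_2) \le 0,
$$
with both integrands non-negative by \eqref{M} and \eqref{B2}; strict monotonicity then forces $\nabla u_1=\nabla u_2$ a.e., and since $u_1-u_2 \in \wupz$ this gives $u_1 = u_2$. The step I expect to be the main obstacle is the coercivity in (iv): a priori the term $\int B(x,u)u_0$ could compete with the coercive $a_1 \norm{\nabla u}_p^p$, since \eqref{B1} only controls $|B|$ by $b_1 + b_2|u|^{p-1}$. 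The rescue uses in an essential way both the sign condition \eqref{B3}, which lets one drop $\int B(x,u)u$, and the Poincar\'e inequality for $u-u_0$, which depends crucially on the relative compactness of $\Omega$ assumed in the statement.
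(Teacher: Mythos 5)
Your proposal is correct and follows essentially the same route as the paper: both reduce the obstacle problem to the abstract existence theorem for monotone, coercive operators on a closed convex subset of a reflexive space (the paper cites Stampacchia's theorem from \cite{KS}, you cite Browder--Minty--Lions, which is the same circle of results), verify monotonicity from \eqref{M} and \eqref{B2}, continuity of $\oF$ from the Carath\'eodory structure together with \eqref{A2} and \eqref{B1}, and coercivity from \eqref{A1}, \eqref{A2}, \eqref{B3} and the Poincar\'e inequality on the relatively compact $\Omega$, with uniqueness coming from strict monotonicity. The only cosmetic differences are that the paper checks weak (strong-to-weak) continuity where you check hemicontinuity, and that it discards the entire $B$-difference term in the coercivity quotient by monotonicity rather than absorbing $\int B(x,u)u_0$ via Young's inequality.
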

\begin{proof}
The proof is basically the same if we assume $B=0$, as in
\cite{HKM}, Appendix 1; in particular, it is an application of
Stampacchia theorem, see for example Corollary
III.1.8 in \cite{KS}. To apply the theorem, we shall verify that $\Kpt$ is closed and convex, which follows straightforwardly from its very definition, and that
$\oF:\wup\to \wupst$ is weakly continuous, monotone and coercive.
Monotonicity is immediate by properties \eqref{M}, \eqref{B2}. To
prove that $\oF$ is weakly continuous, we take a sequence $u_i\to
u$ in $\wup$. By using \eqref{A2} and \eqref{B1}, we deduce from
\eqref{definABF} that
$$
|<\oF(u_i), \phi>| \le \Big((a_2+b_2)\norm{u_i}^{p-1}_{\wup} +
b_1|\Omega|^{\frac{p-1}{p}}\Big)\norm{\phi}_{\wup}
$$
Hence the $\wupst$ norm of $\{\F(u_i)\}$ is bounded. Since $\wupst$ is reflexive, we can extract from any
subsequence a weakly convergent sub-subsequence
$\oF(u_k) \rightharpoonup z$ in $\wupst$, for some $z$. From $u_k\ra
u$ in $\wup$, by Riesz theorem we get (up to a further subsequence) $(u_k, \nabla u_k) \rightarrow
(u,\nabla u)$ pointwise on $\Omega$, and since the maps
$$
X \longmapsto A(X), \qquad t \longmapsto B(x,t)
$$
are continuous, then necessarily $z=\oF(u)$. Since this is true
for every weakly convergent subsequence $\{\oF(u_k)\}$, we deduce
that the whole $\oF(u_i)$ converges weakly to $\oF(u)$. This
proves the weak continuity of $\oF$.\\
Coercivity on $\Kpt$ follows if we fix any $\varphi\in \Kpt$ and consider a diverging sequence $\{u_i\}\subset \Kpt$ and calculate:
\begin{gather*}
 \frac{\ps{\F(u_i)-\F(\varphi)}{u_i-\varphi}}{\norm{u_i-\varphi}_{\wup}}\stackrel{\eqref{B3}}{\geq} \frac{\ps{\A(u_i)-\A(\varphi)}{u_i-\varphi}}{\norm{u_i-\varphi}_{\wup}}\stackrel{\eqref{A1}, \eqref{A2}}{\geq}\\
\geq \frac{a_1\ton{\norm{\nabla u_i}_p^p + \norm{\nabla \varphi}_p^p}-a_2\ton{\norm{\nabla u_i}_p^{p-1}\norm{\nabla \varphi}_p+\norm{\nabla u_i}_p\norm{\nabla \varphi}_p^{p-1}}}{\norm{u_i-\varphi}_{\wup}}
\end{gather*}
This last quantity tends to infinity as $i$ goes to infinity thanks to the Poincarè inequality on $\Omega$:
\begin{gather*}
 \norm{u_i-\varphi}_{L^p(\Omega)}\leq C \norm{\nabla u_i - \nabla \varphi}_{L^p(\Omega)}
\end{gather*}
which leads to $\norm{\nabla u_i}_{L^p(\Omega)} \ge
C_1 + C_2\norm{u_i}_{W^{1,p}(\Omega)}$ for some constants $C_1, C_2$, where $C_1$
depends on $\norm{\varphi}_{\wup}$.
\end{proof}
A very important characterization of the solution of the obstacle
problem is a corollary to the following comparison, whose proof
follows closely that of the comparison Proposition \ref{teo_comp}.
\begin{prop}\label{compaosta}
If $u$ is a solution to the obstacle problem $\Kpt$, and if $w$ is
a supersolution such that $\min\{u,w\}\in \Kpt$, then $u\leq w $
a.e.
\end{prop}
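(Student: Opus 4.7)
The plan is to mimic the standard comparison argument, exploiting the hypothesis $\min\{u,w\}\in \Kpt$ to produce a valid variation in the obstacle inequality. I would take $\varphi \doteq \min\{u,w\}$ as competitor in \eqref{eq_obs}. Since $\varphi-u = -(u-w)_+$, where $(u-w)_+\doteq \max\{u-w,0\}$, the obstacle inequality immediately yields $\ps{\oF(u)}{(u-w)_+}\le 0$.

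Next I would test the supersolution inequality for $w$ against the same non-negative function $(u-w)_+$. Because $u-\theta$ and $\min\{u,w\}-\theta$ both lie in $\wupz$, so does $(u-w)_+=u-\min\{u,w\}$; a standard truncation/mollification argument shows that $(u-w)_+$ is an admissible test function for the supersolution, producing $\ps{\oF(w)}{(u-w)_+}\ge 0$. Subtracting the two inequalities gives the key relation $\ps{\oF(u)-\oF(w)}{(u-w)_+}\le 0$.

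Expanding via \eqref{definABF} and noting that $\nabla(u-w)_+=\nabla u-\nabla w$ on $\{u>w\}$ and vanishes elsewhere, this left-hand side equals
\begin{equation*}
\int_{\{u>w\}} \ps{A(\nabla u)-A(\nabla w)}{\nabla u-\nabla w} \,+\, \int_{\{u>w\}} \bigl(B(x,u)-B(x,w)\bigr)(u-w).
\end{equation*}
By strict monotonicity \eqref{M} and monotonicity \eqref{B2}, both integrands are non-negative, so each integral must vanish. Invoking \eqref{M} once more we deduce $\nabla u=\nabla w$ a.e.\ on $\{u>w\}$, whence $\nabla(u-w)_+=0$ a.e.\ on $\Omega$; since $(u-w)_+\in\wupz$ and $\Omega$ is bounded, Poincaré's inequality forces $(u-w)_+\equiv 0$, i.e., $u\le w$ a.e., as desired.

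The one point needing care is the justification that $(u-w)_+\in\wupz$ is a legitimate test function for the supersolution $w$, since the paper's definition formally restricts test functions to $C^\infty_c$. This is a routine density argument provided $B(x,w)(u-w)_+$ and $\ps{A(\nabla w)}{\nabla(u-w)_+}$ are integrable, which follows from \eqref{B1+}, \eqref{A2} together with $(u-w)_+\in \wupz \cap L^\infty_\loc$ (local boundedness being inherited from that of $u,w$ via Theorem \ref{regularity}). Apart from this technicality, the argument reduces to a single computation once the two test-function choices have been made.
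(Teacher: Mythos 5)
Your argument is correct and follows essentially the same route as the paper's proof: the same competitor $\min\{u,w\}$ in the obstacle inequality, the same test function $(u-w)_+=u-\min\{u,w\}\in\wupz$ for the supersolution, and the same combination via \eqref{M} and \eqref{B2} to force $\nabla(u-w)_+=0$. The only (immaterial) difference is the endgame: you invoke Poincar\'e directly on $(u-w)_+\in\wupz$, whereas the paper concludes by noting that otherwise $u-w$ would be a positive constant lying in $\wupz$, a contradiction.
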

\begin{proof}
Define $U=\{x \vert \ u(x)>w(x)\}$. Suppose by contradiction that
$U$ has positive measure. Since $u$ solves the obstacle problem,
using \eqref{eq_obs} with the function $\varphi=\min\{u,w\}\in \Kpt$
we get
\begin{equation}
0 \le \ < \oF(u), \varphi-u> \ = \int_U \ps{A(\nabla u)}{\nabla w
-\nabla u} + \int_U B(x,u)(w-u).
\end{equation}
On the other hand, applying the definition of supersolution $w$
with the test function $0\le \phi=u-\min\{u,w\} \in \wupz$ we
get
\begin{equation}
0 \le \ < \oF(w),\phi> \ = \int_U \ps{A(\nabla w)}{\nabla u
-\nabla w} + \int_U B(x,w)(u-w)
\end{equation}
adding the two inequalities we get, by \eqref{M} and
\eqref{B2},
$$
0 \le \int_U \ps{A(\nabla u)- A(\nabla w)}{\nabla w -\nabla u} +
\int_U \big[B(x,u)-B(x,w)\big](w-u) \le 0.
$$
%
Since $A$ is strictly monotone, $\nabla u = \nabla w$ a.e. on $U$,
so that $\nabla((u-w)_+)=0$ a.e. on $\Omega$. Consequently, since
$U$ has positive measure, $u-w=c$ a.e. on $\Omega$, where $c$ is a
positive constant. Since $\min\{u,w\}\in \Kpt$, we get $c=u-w=
u-\min\{u,w\} \in \wupz$, contradiction.
\end{proof}
\begin{cor}\label{corsuper}
The solution $u$ to the obstacle problem in $\Kpt$ is the smallest
supersolution in $\Kpt$.
\end{cor}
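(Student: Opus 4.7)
The plan is to reduce the corollary to a direct application of the comparison principle in Proposition \ref{compaosta}. First, observe that $u$ is itself a supersolution belonging to $\Kpt$, as already noted in the paragraph following \eqref{eq_obs} (test against $\varphi=u+\phi$ with $\phi\in \cc$, $\phi\ge 0$). So the content of the corollary is the minimality statement: for any supersolution $w\in \Kpt$, one has $u\le w$ a.e.

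Fix such a $w$. To apply Proposition \ref{compaosta} to the pair $(u,w)$, the only hypothesis to verify is that $\min\{u,w\}\in \Kpt$. The obstacle condition is immediate, since both $u\ge\psi$ and $w\ge\psi$ a.e.\ imply $\min\{u,w\}\ge\psi$ a.e. For the boundary condition, I would write
$$
\min\{u,w\}-\theta \;=\; (u-\theta) - (u-w)_+,
$$
where $u-\theta\in \wupz$ by assumption. The difference $u-w=(u-\theta)-(w-\theta)$ lies in $\wupz$, and the standard lattice property of $\wupz$ (i.e., $f\in\wupz \Rightarrow f_+ \in \wupz$, obtained from the chain rule $\nabla f_+=\chi_{\{f>0\}}\nabla f$ together with approximation by test functions) gives $(u-w)_+\in\wupz$. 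Consequently $\min\{u,w\}-\theta\in\wupz$, and $\min\{u,w\}\in \Kpt$.

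Proposition \ref{compaosta} now yields $u\le w$ a.e., which is exactly what was to be proved. I do not anticipate any real obstacle in this argument; the only slightly non-trivial ingredient is the lattice property of $\wupz$, which is classical and does not depend on the operator $\oF$ at all.
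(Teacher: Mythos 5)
Your argument is correct and is precisely the intended derivation: the paper states this as an immediate corollary of Proposition \ref{compaosta}, and the only point to check is that $\min\{u,w\}\in\Kpt$, which you verify via the identity $\min\{u,w\}-\theta=(u-\theta)-(u-w)_+$ and the lattice property of $\wupz$. Nothing is missing.
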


\begin{prop}\label{minsupmaxsub}
Let $w_1,w_2\in W^{1,p}_\loc(M)$ be supersolutions for $L_\oF$. Then, $w \doteq \min\{w_1,w_2\}$
is a supersolution. Analogously, if $u_1,u_2\in W^{1,p}_\loc(M)$ are
subsolutions for $L_\oF$, then so is $u \doteq \max\{u_1,u_2\}$.
\end{prop}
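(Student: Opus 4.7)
My plan is to use the obstacle-problem machinery just developed to reduce the minimum statement to an existence/comparison argument, and then handle the maximum of subsolutions symmetrically.

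For $w = \min\{w_1, w_2\}$, fix any $\Omega \Subset M$ with Lipschitz boundary; $w$ lies in $W^{1,p}(\Omega)$ by the lattice property of Sobolev spaces, so the class $\mathcal{K}_{w,w}$ is non-empty (it contains $w$) and Theorem \ref{existostacolo} produces a unique solution $u$ of the obstacle problem, which is automatically a supersolution with $u \geq w$ a.e. I would then apply Proposition \ref{compaosta} with the supersolution $w_1$ to conclude $u \leq w_1$ a.e. The verification that $\min\{u, w_1\} \in \mathcal{K}_{w,w}$ is the key point: the inequality $\min\{u, w_1\} \geq w$ is immediate since both $u$ and $w_1$ dominate $w = \min\{w_1, w_2\}$; for the boundary condition, $w \leq \min\{u, w_1\} \leq u$ a.e.\ gives $0 \leq \min\{u, w_1\} - w \leq u - w$ a.e., and since $u - w \in W^{1,p}_0(\Omega)$ a standard Sobolev-lattice argument places $\min\{u, w_1\} - w \in W^{1,p}_0(\Omega)$ as well. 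Symmetrically $u \leq w_2$, giving $u \leq w$; together with $u \geq w$ this forces $u = w$ a.e.\ on $\Omega$, so $w$ itself is a supersolution there. Since every non-negative $\phi \in C^\infty_c(M)$ has support in such an $\Omega$, the statement globalizes.

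For the maximum $u = \max\{u_1, u_2\}$ of two subsolutions I would run a parallel argument. Either one sets up the ``upper'' obstacle problem with convex set $\{f \in W^{1,p}(\Omega) : f \leq \psi \text{ a.e., } f - \theta \in W^{1,p}_0\}$ and reversed variational inequality --- solvability and the analogue of Proposition \ref{compaosta} follow from the same Stampacchia/monotonicity reasoning used above --- or one gives a direct test-function proof: for $\phi \in C^\infty_c(M)$ with $\phi \geq 0$, test $u_1$ against $\phi(1 - h_\varepsilon(u_2 - u_1))$ and $u_2$ against $\phi\, h_\varepsilon(u_2 - u_1)$, where $h_\varepsilon(t) = \min(1, \max(0, t/\varepsilon))$, sum and let $\varepsilon \to 0$; the pointwise formula $\nabla u = \chi_{\{u_1 \geq u_2\}} \nabla u_1 + \chi_{\{u_2 > u_1\}} \nabla u_2$ together with the monotonicity \eqref{B2} handles the limit.

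The main obstacle I anticipate is the $W^{1,p}_0$ trace verification for $\min\{u, w_1\}$: turning the heuristic ``it equals $w$ on $\partial \Omega$'' into rigorous membership requires the general fact that a non-negative $W^{1,p}$ function majorised a.e.\ by an element of $W^{1,p}_0$ belongs itself to $W^{1,p}_0$. All remaining steps are routine applications of Theorem \ref{existostacolo}, Proposition \ref{compaosta} (and its Corollary \ref{corsuper}), together with Sobolev lattice operations.
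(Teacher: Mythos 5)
Your treatment of the minimum of two supersolutions is correct and essentially coincides with the paper's: the paper likewise identifies $w|_{\Omega_j}$ with the solution of the obstacle problem $\K_w$ on a smooth exhaustion, though it compresses the comparison step into a one-line appeal to Corollary \ref{corsuper}; your explicit invocation of Proposition \ref{compaosta}, with the admissibility check $0\le \min\{u,w_1\}-w\le u-w\in \wupz$ and the standard lattice fact for $W^{1,p}_0$, is precisely the mechanism needed to make that line rigorous (and the Lipschitz boundary you impose on $\Omega$ is not needed for Theorem \ref{existostacolo}). Where you genuinely diverge is the subsolution half. The paper neither sets up an ``upper'' obstacle problem nor runs a test-function argument: it defines $\widetilde{A}(X)\doteq -A(-X)$ and $\widetilde{B}(x,t)\doteq -B(x,-t)$, observes that these again satisfy $\Se$, and notes that $L_\oF u_i\ge 0$ iff $L_{\widetilde{\F}}(-u_i)\le 0$, so $\max\{u_1,u_2\}=-\min\{-u_1,-u_2\}$ reduces the second claim to the first with no new analysis. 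Your alternatives both work but cost more: the mirrored obstacle problem obliges you to re-derive the Stampacchia existence theorem and the comparison proposition in reversed form, while the $h_\eps$ argument needs a density step to admit compactly supported Lipschitz test functions and relies on the strict monotonicity \eqref{M} of $A$ --- not \eqref{B2}, as you write --- to discard the $h_\eps'$ term with the correct sign. The reflection trick is worth internalizing here, since the paper reuses it verbatim later (in the pasting Lemma \ref{pasting} and in the proof of $(3)\Rightarrow(1)$ of the main theorem).
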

\begin{proof}
Consider a smooth exhaustion $\{\Omega_j\}$ of $M$, and the obstacle problem $\K_w$ on $\Omega_j$. By Corollary \ref{corsuper}
its solution is necessarily $w_{|\Omega_j}$, and so $w$ is a supersolution
being locally the solution of an obstacle problem. As for the second part
of the statement, define $\widetilde{A}(X) \doteq -A(-X)$ and
$\widetilde{B}(x,t) \doteq -B(x,-t)$. Then, $\widetilde{A},\widetilde{B}$
satisfy the set of assumptions $\Se$. Denote with
$\widetilde{\mathcal{F}}$ the operator associated to
$\widetilde{A},\widetilde{B}$. Then, it is easy to see that
$L_{\oF} u_i \ge 0$ if and only if
$L_{\widetilde{\mathcal{F}}}(-u_i)\le 0$, and to conclude it is
enough to apply the first part with operator
$L_{\widetilde{\mathcal{F}}}$.
\end{proof}

The next version of the pasting lemma generalizes the previous proposition to the case when one of the supersolutions is not defined on the whole $M$. Before stating it, we need a preliminary definition. Given an open subset $\Omega \subset M$, possibly with non-compact closure, we recall that the space $W^{1,p}_\loc(\overline{\Omega})$ is the set of all functions $u$ on $\Omega$ such that, for every relatively compact open set $V \Subset M$ that intersects $\Omega$, $u \in W^{1,p}(\Omega \cap V)$. A function $u$ in this space is, loosely speaking, well-behaved on relatively compact portions of $\partial \Omega$, while no global control on the $W^{1,p}$ norm of $u$ is assumed. Clearly, if $\Omega$ is relatively compact, $W^{1,p}_\loc(\overline{\Omega})= \wup$. We identify the following subset of $W^{1,p}_\loc(\overline{\Omega})$, which we call $X^p_0(\Omega)$:
\begin{equation}\label{definXp0}
X^p_0(\Omega) = \left\{ \begin{array}{l}
\disp u \in W^{1,p}_\loc(\overline{\Omega}) \text{ such that, for every open set } U \Subset M \text{ that} \\[0.3cm]
\text{intersects } \Omega \text{, there exists } \{\phi_n\}_{n=1}^{+\infty} \subset C^0(\overline{\Omega \cap U})\cap W^{1,p}(\Omega \cap U)\text{,} \\[0.3cm]
\disp \text{with } \phi_n \equiv 0 \text{ in a neighbourhood of } \partial \Omega \text{, satisfying} \\[0.3cm]
\disp \varphi_n \ra u \text{ in } W^{1,p}(\Omega \cap U) \text{ as } n\ra +\infty
\end{array} \right.
\end{equation}
If $\Omega$ is relatively compact, then $X_0^p(\Omega) = W^{1,p}_0(\Omega)$. 
\begin{oss}\label{bordopercontinue}
\emph{Observe that, if $u \in C^0(\overline{\Omega}) \cap W^{1,p}_\loc(\overline{\Omega})$, then $u \in X^p_0(\Omega)$ if and only if $u=0$ on $\partial \Omega$. This is the version, for non-compact domains $\Omega$, of a standard result. However, for the convenience of the reader we briefly sketch the proof. Up to working with positive and negative part separately, we can suppose that $u \ge 0$ on $\Omega$. If $u=0$ on $\partial \Omega$, then choosing the sequence $\phi_n = \max\{u- 1/n, 0\}$ it is easy to check that $u \in X^p_0(\Omega)$. Viceversa, if $u\in X^p_0(\Omega)$, let $x_0 \in \partial \Omega$ be any point. Choose $U_1 \Subset U_2\Subset M$ such that $x_0 \in U_1$, and a sequence $\{\phi_n\} \in C^0(\overline{\Omega \cap U_2}) \cap W^{1,p}(\Omega \cap U_2)$ as in the definition of $X^p_0(\Omega)$. If $\psi \in C^\infty_c(U_2)$ is a smooth cut-off function such that $\psi=1$ on $U_1$, then $\psi\phi_n \ra \psi u$ in $W^{1,p}(\Omega \cap U_2)$. Since $\psi \phi_n$ is compactly supported in $\Omega \cap U_2$, then $\psi u \in W^{1,p}_0(\Omega \cap U_2)$. It is a standard fact that, in this case, $\psi u=0$ on $\partial (\Omega \cap U_2)$. Since $x_0 \in \partial \Omega \cap U_2 \subset \partial(\Omega \cap U_2)$, $u(x_0) = u\psi(x_0)=0$. By the arbitrariness of $x_0$, this shows that $u=0$ on $\partial \Omega$.} 
\end{oss}
\begin{lemma}\label{pasting}
Let $w_1 \in W^{1,p}_\loc(M)$ be a supersolution for $L_\oF$, and let $w_2 \in W^{1,p}_\loc(\overline{\Omega})$ be a supersolution on some open set $\Omega$ with $\overline{\Omega}\subset M$, $\overline{\Omega}$ being possibly non-compact. Suppose that $\min\{w_2-w_1,0\} \in X^p_0(\Omega)$. Then, the function
$$
m\doteq \left\{ \begin{array}{ll} \min\{w_1,w_2\} & \quad \text{on } \Omega \\[0.1cm]
w_1 & \quad \text{on } M \backslash \Omega \end{array} \right.
$$ 
is a supersolution for $L_\oF$ on $M$. In particular, if further $w_1 \in C^0(M)$ and $w_2 \in C^0(\overline{\Omega})$, then $m$ is a supersolution on $M$ whenever $w_1=w_2$ on $\partial \Omega$. A similar statement is valid for subsolutions, replacing $\min$ with $\max$.
\end{lemma}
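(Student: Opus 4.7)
The strategy is to reduce the gluing across $\partial \Omega$ to the interior minimum lemma, Proposition \ref{minsupmaxsub}, by exploiting the hypothesis $g \doteq \min\{w_2 - w_1, 0\} \in X^p_0(\Omega)$, which morally says that $w_2 \geq w_1$ on $\partial \Omega$ in the Sobolev sense. First I would show $m \in W^{1,p}_\loc(M)$. Extending $g$ by $0$ to $M \setminus \Omega$, we have $m = w_1 + g$ on all of $M$. Picking any $U \Subset M$ intersecting $\Omega$, the approximating sequence $\phi_n \in C^0(\overline{\Omega \cap U}) \cap W^{1,p}(\Omega \cap U)$ from the definition of $X^p_0(\Omega)$ vanishes in a neighbourhood of $\partial \Omega$, so extending by $0$ yields $\phi_n \in W^{1,p}(U)$; their limit is the zero-extended $g$, giving $m \in W^{1,p}(U)$.

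The core of the proof is then the supersolution inequality. Fix $\phi \in C^\infty_c(M)$, $\phi \geq 0$, and introduce the Lipschitz cut-off $\xi_\delta : \R \to [0,1]$ with $\xi_\delta(t) = 1$ for $t \leq -\delta$, $\xi_\delta(t) = -t/\delta$ on $[-\delta,0]$, and $\xi_\delta(t) = 0$ for $t \geq 0$. Set $T_\delta \doteq \xi_\delta(g)$, extended by $0$ outside $\Omega$ (which is consistent since $\xi_\delta(0)=0$). Since $\xi_\delta$ is Lipschitz with $\xi_\delta(0) = 0$, composition preserves $X^p_0(\Omega)$, so $\phi T_\delta \in W^{1,p}$ has compact support inside $\Omega$ and is admissible as a non-negative test function for $w_2$ on $\Omega$; correspondingly $\phi(1-T_\delta) \geq 0$ is compactly supported in $M$ and admissible for $w_1$. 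Adding the two supersolution inequalities and expanding $\nabla(\phi T_\delta)$ and $\nabla(\phi(1-T_\delta))$, the contributions with $\phi\nabla T_\delta$ combine into
\begin{gather*}
\int_\Omega \phi \ps{A(\nabla w_2) - A(\nabla w_1)}{\nabla T_\delta}.
\end{gather*}
On the transition set $\{-\delta < g < 0\} = \{w_1 - \delta < w_2 < w_1\}$, the Sobolev chain rule gives $\nabla T_\delta = -\frac{1}{\delta}(\nabla w_2 - \nabla w_1)$ (using the standard formula $\nabla g = (\nabla w_2 - \nabla w_1)\chi_{\{w_2<w_1\}}$), and the strict monotonicity \eqref{M} of $A$ makes this integral non-positive. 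Discarding it preserves the inequality.

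What remains is
\begin{gather*}
\int_\Omega T_\delta \ps{A(\nabla w_2)}{\nabla \phi} + \int_M (1-T_\delta)\ps{A(\nabla w_1)}{\nabla \phi} + \int_\Omega B(x,w_2)\phi T_\delta + \int_M B(x,w_1)\phi(1-T_\delta) \geq 0.
\end{gather*}
Letting $\delta \to 0$, $T_\delta \to \chi_E$ pointwise where $E \doteq \{w_2 < w_1\}\cap \Omega$, with $|T_\delta|\leq 1$. By \eqref{A2}, $|\nabla \phi|$ being bounded on $\supp\phi$, and the local integrability of $B(x,w_i)$, dominated convergence delivers the limit inequality; and since $\nabla m = \nabla w_2$ on $E$ and $\nabla m = \nabla w_1$ elsewhere, this is exactly $\int_M \ps{A(\nabla m)}{\nabla \phi} + B(x,m)\phi \geq 0$. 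The subsolution statement follows from the reflection trick $\widetilde A(X) = -A(-X)$, $\widetilde B(x,t) = -B(x,-t)$ of Proposition \ref{minsupmaxsub}.

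I expect the main technical obstacle to be the rigorous verification that $\phi T_\delta$ is an admissible test function for $w_2$: one first approximates $T_\delta$ in $W^{1,p}(\Omega \cap U)$ by continuous functions $\phi_n^\xi$ vanishing near $\partial \Omega$ (obtained by composing the $X^p_0$ approximators of $g$ with $\xi_\delta$), multiplies by $\phi$ to obtain products with compact support in $\Omega$ (they vanish near $\partial \Omega$ and $\phi$ has compact support in $M$), and passes to the limit by density. Everything else is a clean bookkeeping of the monotonicity cancellation on the transition strip and dominated convergence.
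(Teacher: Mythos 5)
Your argument is correct, but it follows a genuinely different route from the paper's. The paper proves the lemma by running the obstacle-problem machinery: on each relatively compact $U$ it solves the obstacle problem $\K_m$ on $U$, uses Corollary \ref{corsuper} to see that the solution $s$ satisfies $s\le w_1$ (hence $s=m$ off $\Omega$), shows $s-m\in W^{1,p}_0(\Omega\cap U)$ via the $X^p_0$ hypothesis, and then pairs the obstacle inequality for $s$ against the supersolution inequality for $m=\min\{w_1,w_2\}$ on $\Omega\cap U$ (the latter supplied by Proposition \ref{minsupmaxsub}) to force $s=m$; thus $m$ is locally the solution of an obstacle problem and hence a supersolution. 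You instead give a direct test-function proof: split a non-negative $\phi$ as $\phi T_\delta+\phi(1-T_\delta)$ with $T_\delta=\xi_\delta(\min\{w_2-w_1,0\})$, use the $X^p_0$ hypothesis only to certify that $\phi T_\delta$ is an admissible test function compactly supported in $\Omega$, discard the transition-strip term by the monotonicity \eqref{M} of $A$ (the sign computation $\nabla T_\delta=-\delta^{-1}(\nabla w_2-\nabla w_1)$ on $\{-\delta<g<0\}$ is right), and pass to the limit $\delta\to 0$ by dominated convergence. This is the classical ``pasting by truncation'' argument in the style of Heinonen--Kilpel\"ainen--Martio; it is more self-contained (it needs neither solvability of the obstacle problem nor, in fact, the monotonicity \eqref{B2} of $B$, which the paper's route does use when concluding $\nabla(s-m)=0$), at the price of the density bookkeeping you flag, namely extending the supersolution inequality from $C^\infty_c$ to non-negative bounded elements of $W^{1,p}_0$, which is standard and at the same level of rigor the paper itself employs elsewhere. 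The only point you leave implicit is the ``in particular'' clause for continuous $w_1,w_2$ agreeing on $\partial\Omega$, which reduces immediately to Remark \ref{bordopercontinue}, exactly as in the paper.
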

\begin{proof}
We first need to check that $m \in W^{1,p}_\loc(M)$. Let $U\Subset M$ be an open set. By assumption, there exists a sequence of functions $\{\phi_n\} \in C^0(\overline{\Omega \cap U}) \cap W^{1,p}(\Omega \cap U)$, each $\phi_n$ being zero in some neighbourhood of $\partial \Omega$, which converges in the $W^{1,p}$ norm to $\min\{w_2-w_1,0\}$. We can thus continuously extend $\phi_n$ on the whole $U$ by setting $\phi_n=0$ on $U\backslash \Omega$, and the resulting extension is in $W^{1,p}(U)$. Define $u= \min\{w_2-w_1,0\}\chi_{\Omega}$, where $\chi_{\Omega}$ is the indicatrix function of $\Omega$. Then, $\phi_n \rightarrow u$ in $W^{1,p}(U)$, so that $u\in W^{1,p}(U)$. It follows that $w_1 + \phi_n \in W^{1,p}(U)$ converges to $m=w_1+u$, which shows that $m\in W^{1,p}(U)$. To prove that $L_\oF m \le 0$ we use a technique similar to Proposition \ref{compaosta}. Let $U \Subset M$ be a fixed relatively compact open set, and let $s$ be the solution to the obstacle problem $\K_m$ on $U$. Then we have by Corollary \ref{corsuper} $s \leq w_1$ a.e. on  $U$ and so $s=w_1=m$ on $U\backslash\Omega$. Since $s$ solves the obstacle problem, using $\varphi=m$ in equation \eqref{eq_obs} we have:
\begin{equation}\label{picci}
0 \leq \ < \oF(s), m-s> \ = \int_{\Omega\cap U} \ps{A(\nabla s)}{\nabla m -\nabla s} + \int_{\Omega \cap U} B(x,s)(m-s).
\end{equation}
On the other hand $m$ is a supersolution in $\Omega \cap U$, being the minimum of two supersolutions, by Proposition \ref{minsupmaxsub}. To apply the weak definition of $L_\oF m \le 0$ on $\Omega \cap U$ to the test function $s-m$, 
%
we first claim that $s-m \in W^{1,p}_0(\Omega\cap U)$. Since we know that $s\leq w_1$ on $U$, then on $\Omega \cap U$
$$
0 \le s-m \leq w_1 -\min\{w_2,w_1\} = - \min\{w_2-w_1, 0\} \in X^p_0(\Omega). 
$$
The claim now follows by a standard result (see for example \cite{HKM}, Lemma 1.25), but for the sake of completeness we sketch the proof. Since $0 \le s-m\in W^{1,p}_0 (U)$ by the definition of the obstacle problem, there exists a sequence of nonnegative functions $\psi_n\in C^{\infty}_c(U)$ converging to $s-m$. We further consider the sequence $\{\phi_n\}$ of continuous functions, converging to $\min\{w_2-w_1,0\}$, defined at the beginning of this proof. Then, on $\Omega \cap U$, $0 \le s-m \le \lim_{n} \min\{-\phi_n,\psi_n\}$, where the limit is taken in $W^{1,p}(\Omega\cap U)$. Now, $\min\{-\phi_n,\psi_n\}$ has compact support in $\Omega \cap U$, and this proves the claim.
Applying the definition of $L_{\oF}m\le 0$ to the test function $s-m$ we get: 
\begin{equation}\label{pocci}
0\leq \ <\oF(m),s-m> \ = \int_{\Omega\cap U} \ps{A(\nabla m)}{\nabla s -\nabla m} + \int_{\Omega\cap U} B(x,m)(s-m).
\end{equation}
Summing inequalities \eqref{picci} and \eqref{pocci}, we conclude as in Proposition \ref{compaosta} that $\nabla(s-m)=0$ in $\Omega \cap U$ with $s-m\in W^{1,p}_0(\Omega\cap U)$, and so the two functions are equal there. Since $s=w=m$ on $U\backslash \Omega$, then $m=s$ is a supersolution on $U$. The thesis follows by the arbitrariness of $U$. If further $w_1 \in C^0(M)$ and $w_2 \in C^0(\overline{\Omega})$, then the conclusion follows by Remark \ref{bordopercontinue}. The proof of the statement for subsolutions is obtained via the same trick as in Proposition \ref{minsupmaxsub}.
\end{proof}

As for the regularity of solutions of the obstacle problem, we have
\begin{teo}[\cite{ZM}, Theorem 5.4 and Corollary 5.6]\label{continuosta}
If the obstacle $\psi$ is continuous in $\Omega$, then the
solution $u$ to $\Kpt$ has a continuous representative in the
Sobolev sense. Furthermore, if $\psi \in C^{0,\alpha}(\Omega)$ for
some $\alpha\in (0,1)$, then there exist $C,\beta>0$ depending
only on $p,\alpha,\Omega, \norm{u}_{L^\infty(\Omega)}$ and on the
parameters in $\Se$ such that
$$
\norm{u}_{C^{0,\beta}(\Omega)}\le
C(1+\norm{\psi}_{C^{0,\alpha}(\Omega)})
$$
\end{teo}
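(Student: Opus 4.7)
The plan is to exploit the dichotomy that, on the non-contact set $N \doteq \{u > \psi\}$, the obstacle-problem solution $u$ is a free solution of $L_\oF u = 0$, while on the contact set $C \doteq \Omega \setminus N$ we have $u = \psi$. Regularity of $u$ on $N$ then comes from the interior estimate of Theorem \ref{regularity}(ii), regularity on $C$ is inherited from $\psi$, and the delicate point is to match the two along the free boundary $\partial N \cap \Omega$.

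First, $u$ is a supersolution (as observed after Corollary \ref{corsuper}), and Theorem \ref{regularity}(i) provides a lower-semicontinuous representative; together with the continuity of $\psi$, this makes $N$ open. For any $\phi \in C^\infty_c(N)$, the compactness of $\mathrm{supp}(\phi)$ and the strict inequality $u > \psi$ on $N$ give $u \pm t \phi \in \Kpt$ for small $|t|$, so \eqref{eq_obs} forces the equality $\ps{\oF(u)}{\phi} = 0$. Hence $L_\oF u = 0$ weakly on $N$, and Theorem \ref{regularity}(ii) gives a locally H\"older representative on $N$ with constants depending only on the data in $\Se$ and $\norm{u}_{L^\infty(\Omega)}$.

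Continuity at a free-boundary point $x_0 \in \partial N \cap \Omega$ proceeds in two halves. The lower bound $\liminf_{x \to x_0} u(x) \ge \psi(x_0)$ is immediate from $u \ge \psi$ a.e.\ and the continuity of $\psi$. For the upper bound, I would build a local supersolution barrier: on a small ball $B_r(x_0)$ choose a supersolution $v$ of $L_\oF$ with $v \ge \psi$ on $B_r$ and $v(x_0) \le \psi(x_0) + \eps$. Applying Proposition \ref{compaosta} to the restricted obstacle problem on $B_r$ with obstacle $\psi$ and boundary datum $v|_{\partial B_r}$ forces $u \le v$, yielding $\limsup_{x \to x_0} u(x) \le \psi(x_0) + \eps$ and thus continuity. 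For the quantitative H\"older estimate with $\psi \in C^{0,\alpha}$, the same barrier strategy is refined by using $v_\pm = \psi(x_0) \pm K r^\alpha \pm L w_r$, where $w_r$ is an appropriate auxiliary supersolution of scale $r$, tuned so that $K \sim \norm{\psi}_{C^{0,\alpha}}$ and $L$ absorbs the interior H\"older norm of $u$. Comparison combined with Theorem \ref{regularity}(ii) then produces an oscillation decay of the desired exponential form $Cr^\beta(1+\norm{\psi}_{C^{0,\alpha}})$ on balls around $x_0$.

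The main obstacle is constructing such barriers in the generality of $\Se$: the non-homogeneous potential $B(x,u)$ precludes the direct use of the explicit radial $p$-harmonic comparison functions available for the pure $p$-Laplacian, and one must either iterate Harnack (Theorem \ref{harnack}) in De Giorgi fashion on level sets of $u-\psi$ or truncate $B$ and pass to a limit. The H\"older exponent $\beta$ produced this way is typically smaller than $\alpha$ and governed by the ellipticity parameters rather than by $\alpha$ itself. This barrier-plus-oscillation-decay scheme is precisely the content of \cite{ZM}, Theorem 5.4 and Corollary 5.6, from which the statement is quoted without an independent proof.
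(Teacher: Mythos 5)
The paper does not give a proof of this theorem: it is quoted directly from \cite{ZM}, Theorem 5.4 and Corollary 5.6 (with only the remark that the lower-order terms $b_0$ and $\mathbf{a}$ there vanish under $\Se$), and your proposal ultimately defers to the same reference for the hard step. Your sketch of the mechanism behind that reference — openness of the non-contact set via the lower-semicontinuous representative, the free equation $L_{\oF}u=0$ there (which is exactly the paper's Proposition \ref{esoluzione}), and barrier plus oscillation-decay matching at the free boundary via Proposition \ref{compaosta} — is a faithful outline of how \cite{ZM} proceeds, so the two treatments essentially coincide.
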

\begin{oss}
\emph{The interested reader should be advised that, in the
notation of \cite{ZM}, $b_0$ and $\textbf a$ are both zero with
our assumptions. Stronger results, for instance $C^{1,\alpha}$
regularity, can be obtained from stronger requirements on $\psi$,
$A$ and $B$ which are stated for instance in \cite{ZM}, Theorem
5.14.}
\end{oss}
In the proof of our main theorem, and to get some boundary regularity results, it will be important to see what happens on the set where the solution of the obstacle problem is strictly above the obstacle.
\begin{prop}\label{esoluzione}
 Let $u$ be the solution of the obstacle problem $\Kpt$ with continuous obstacle $\psi$. If $u>\psi$ on an open set
$D$, then $u$ is a solution of $L_\oF u=0$ on $D$.
\end{prop}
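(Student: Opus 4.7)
The plan is to show that $u$ is both a subsolution and a supersolution on $D$, so that $L_\oF u = 0$ there. The supersolution direction is free: as observed right after \eqref{eq_obs}, taking $\varphi = u+\phi$ with $0 \le \phi \in C^\infty_c(\Omega)$ in the definition of the obstacle problem immediately gives $\langle \oF(u), \phi\rangle \ge 0$, so $L_\oF u \le 0$ on all of $\Omega$, and in particular on $D$. Thus the entire content of the statement is the subsolution direction.

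To obtain $L_\oF u \ge 0$ on $D$, I would exploit the admissibility of perturbations $\varphi = u - \eps \phi$ for $0 \le \phi \in C^\infty_c(D)$ and small $\eps>0$. The crucial point is that $\varphi$ lies in $\Kpt$: the boundary condition $\varphi - \theta = (u-\theta) - \eps \phi \in W^{1,p}_0(\Omega)$ is automatic since $\phi \in C^\infty_c(\Omega)$, and for the obstacle condition $\varphi \ge \psi$ a.e.\ I would argue as follows. By Theorem \ref{continuosta}, the solution $u$ admits a continuous representative, which together with the continuity of $\psi$ makes $u-\psi$ continuous on $D$. Since $u > \psi$ on $D$ by hypothesis, on the compact set $K = \operatorname{supp}\phi \subset D$ we have $u - \psi \ge \delta$ for some $\delta>0$. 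Taking $\eps < \delta / \|\phi\|_\infty$ ensures $u - \eps\phi \ge \psi$ on $K$, while outside $K$ one has $u - \eps\phi = u \ge \psi$; so $\varphi \in \Kpt$.

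Plugging this $\varphi$ into \eqref{eq_obs} gives
\begin{equation*}
0 \;\le\; \langle \oF(u), \varphi - u\rangle \;=\; -\eps\,\langle \oF(u), \phi\rangle,
\end{equation*}
that is, $\langle \oF(u),\phi\rangle \le 0$ for every nonnegative $\phi \in C^\infty_c(D)$. By the definition of weak subsolution, this is precisely $L_\oF u \ge 0$ on $D$, completing the argument.

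The main (minor) obstacle is justifying that the continuous representative of $u$ is the correct one to compare with $\psi$ pointwise on $K$; this is handled by invoking Theorem \ref{continuosta}, after which the whole argument reduces to the elementary openness/compactness estimate $u - \psi \ge \delta > 0$ on $K$ and the corresponding choice of $\eps$.
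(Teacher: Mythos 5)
Your argument is correct and is essentially the paper's own proof: the key point in both is that, since $u>\psi$ on $D$ and $\operatorname{supp}\phi$ is compact, the perturbation $u-\eps\phi$ (the paper perturbs by $\pm\delta\phi$ for arbitrary $\phi\in C^\infty_c(D)$ at once) stays in $\Kpt$ for small $\eps$, and \eqref{eq_obs} then forces $\langle\oF(u),\phi\rangle=0$. Your explicit justification of the uniform gap $u-\psi\ge\delta$ via the continuous representative from Theorem \ref{continuosta} is exactly what the paper's phrase ``by continuity'' is standing in for.
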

\begin{proof}
%
%
Consider any test function $\phi\in C^{\infty}_c(D)$. Since
$u>\psi$ on $D$, and since $\phi$ is bounded, by continuity
there exists $\delta>0$ such that $u\pm \delta\phi \in \Kpt$.
From the definition of solution to the obstacle problem we have
that:
\begin{gather*}
\pm  < \oF(u),\phi> \ = \frac{1}{\delta} < \oF(u), \pm \delta
\phi> \ =  \frac{1}{\delta} < F(u), (u\pm\delta\phi) -u > \
\ge 0,
\end{gather*}
hence $<\oF(u),\phi> \ =0$ for every $\phi\in C^\infty_c(D)$,
as required.
\end{proof}

As for boundary regularity, to the best of our knowledge there is no result for solutions of the kind of obstacle problems we are studying. However, if we restrict ourselves to Dirichlet problems (i.e. obstacle problems with $\psi=-\infty$), some results are available. We briefly recall that a point $x_0\in \partial \Omega$ is called ``regular'' if for every function $\theta\in \wup$ continuous in a neighborhood of $x_0$, the unique solution to the relative Dirichlet problem is continuous in $x_0$, and that a necessary and sufficient condition for $x_0$ to be regular is the famous Wiener criterion (which has a local nature). For our purposes, it is enough to use some simpler sufficient conditions for regularity, so we just cite the following corollary of the Wiener criterion:
\begin{teo}[\cite{GZ}, Theorem 2.5]\label{teobjorn}
Let $\Omega$ be a domain, and suppose that $x_0\in\partial\Omega$ has a neighborhood where $\partial \Omega$ is Lipschitz, then $x_0$ is regular for the Dirichlet problem.
\end{teo}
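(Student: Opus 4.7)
The plan is to reduce the problem to a local Euclidean statement and then verify the Wiener criterion, or equivalently construct an explicit barrier. Regularity is a local notion, so I would first pass to a chart around $x_0$ in which the Riemannian metric is bi-Lipschitz equivalent to the Euclidean one and $\partial\Omega\cap U$ is a Lipschitz graph of the form $\{x_m=\phi(x')\}$ with $\phi(0)=0$; the set of hypotheses $\Se$ is preserved (with modified constants) under such bi-Lipschitz changes of variables, so the question becomes a purely Euclidean one about a quasilinear operator on a domain with Lipschitz boundary.

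The key geometric ingredient is an exterior corkscrew at $x_0$: the Lipschitz graph representation provides a truncated open cone $C\subset U\setminus\Omega$ with vertex at $x_0$, hence the uniform density estimate $|B(x_0,r)\setminus\Omega|\ge \alpha|B(x_0,r)|$ for all sufficiently small $r>0$. Upgrading this Lebesgue-density bound to a $p$-capacity lower bound via the Poincar\'e inequality applied to test functions vanishing on $B(x_0,r)\setminus\Omega$ yields
\[
\operatorname{Cap}_p\ton{B(x_0,r)\setminus\Omega,\,B(x_0,2r)}\ \ge\ c\,r^{m-p}
\]
for a constant $c>0$ independent of $r$, which matches (up to constants) the capacity of the whole ball. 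Plugging this estimate into the nonlinear Wiener integral then gives divergence of the Wiener sum at every small scale, and hence regularity of $x_0$.

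The principal obstacle is justifying that a Wiener-type criterion is actually applicable for the full class of operators $L_\oF$ at hand. Classical formulations in \cite{HKM} and related sources are stated for $B\equiv 0$, and extending them to a Caratheodory $B$ satisfying only \eqref{B1}--\eqref{B3} requires care. A cleaner alternative, which keeps everything inside the toolbox developed in Section \ref{sec_tech}, is to construct an explicit barrier directly: from the exterior cone one produces a positive supersolution $\beta\in C^0(\overline{\Omega\cap U})\cap W^{1,p}(\Omega\cap U)$ of $L_\oA\beta\le -M$ in $\Omega\cap U$, with $\beta(x_0)=0$ and $M$ larger than an a priori bound on $|B(x,\theta)|$ in a neighborhood of the boundary value; a candidate is a suitable power of the distance to the axis of the opposite cone, perturbed by a smooth bounded function to absorb the potential. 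Combining $\beta$ with its negative analogue and the comparison principle of Proposition \ref{teo_comp} then sandwiches any Dirichlet solution $u$ between $\theta(x_0)\pm C\beta$ near $x_0$, forcing $u(x)\to\theta(x_0)$ and proving regularity.
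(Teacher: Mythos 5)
You should first be aware that the paper does not prove this statement at all: Theorem \ref{teobjorn} is imported verbatim from \cite{GZ} (Theorem 2.5) as a black box, precisely so that the authors do not have to reprove boundary regularity for the class of operators at hand. So there is no internal proof to compare against; what you have written is an outline of the argument that \cite{GZ} and \cite{HKM} actually carry out. Your first three steps are sound and are indeed the standard route: regularity is a local notion and is stable under bi-Lipschitz changes of chart (which preserve \eqref{A1}, \eqref{A2}, \eqref{M} up to constants); a Lipschitz graph gives an exterior cone, hence a uniform measure-density estimate for the complement; and Maz'ya's capacitary Poincar\'e inequality upgrades measure density to $\cp\ton{B(x_0,r)\setminus\Omega,\,B(x_0,2r)}\ge c\,\cp\ton{B(x_0,r),B(x_0,2r)}$, so the Wiener integral diverges. (For $p>m$ the statement is immediate from the Morrey embedding, which is exactly why the paper's subsequent remark restricts the citation to $1<p\le m$.)

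Two caveats. First, the step you correctly single out as the principal obstacle --- applicability of the Wiener sufficiency theorem to $L_\oF$ rather than to $L_\oA$ --- is already handled by the cited reference: \cite{GZ} allows lower-order terms of natural growth, and since the Dirichlet solution $u$ is a priori bounded (comparison with the constants $\pm\sup|\theta|$), the term $B(x,u)$ can simply be treated as a bounded right-hand side via \eqref{B1}. Second, your fallback barrier construction is the one place where the sketch would not survive scrutiny for the full class $\Se$: taking ``a suitable power of the distance to the axis of the opposite cone'' as a supersolution is a computation that relies on homogeneity of the operator (it works for $\Delta_p$, or for $L_\varphi$ with $\varphi(t)=t^{p-1}$), whereas here $A$ is only assumed to satisfy the two-sided bounds \eqref{A1}--\eqref{A2} and strict monotonicity, with no homogeneity whatsoever. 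For general $A$ the barrier must instead be built from (a truncation of) the $\oA$-capacitary potential of the exterior cone, and its decay at $x_0$ is controlled by exactly the Wiener-type estimates you were hoping to bypass. In short, the capacity/Wiener route is not dispensable; the barrier is a repackaging of it rather than an elementary alternative, and the honest options are either to run the full Wiener machinery with lower-order terms (which is what \cite{GZ} does) or to cite it, as the paper chooses to.
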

For a more specific discussion of the subject, we refer the reader to \cite{GZ}. We mention that Dirichlet and obstacle problems have been studied also in metric space setting, and boundary regularity theorems with the Wiener criterion have been obtained for example in \cite{BB}, Theorem 7.2.
\begin{oss}
\emph{Note that \cite{GZ} deals only with the case $1<p\leq m$,
but the other cases follows from standard Sobolev embeddings.}
\end{oss}
Using the comparison principle and Proposition \ref{esoluzione}, it is possible to obtain a corollary to this theorem which deals with boundary regularity of some particular obstacle problems.
\begin{cor}\label{obs_bc}
 Consider the obstacle problem $\K_{\psi,\theta}$ on $\Omega$, and suppose that $\Omega$ has Lipschitz boundary and both $\theta$ and $\psi$ are continuous up to the boundary. Then the solution $w$ to $\K_{\psi,\theta}$ is continuous up to the boundary (for convenience we denote $w$ the continuous representative of the solution).
\end{cor}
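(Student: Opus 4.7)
The plan is to establish, at every $x_0 \in \partial\Omega$, the two one-sided boundary limits $\liminf_{\Omega \ni x \to x_0} w(x) \ge \theta(x_0)$ and $\limsup_{\Omega \ni x \to x_0} w(x) \le \theta(x_0)$; interior continuity of $w$ is already furnished by Theorem~\ref{continuosta} (with continuous obstacle $\psi$).

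For the \emph{lower bound}, I would invoke the solution $u_D \in \K_{-\infty, \theta}$ of the pure Dirichlet problem $L_\oF u_D = 0$, whose existence follows from Theorem~\ref{existostacolo}. The Lipschitz character of $\partial \Omega$ and Theorem~\ref{teobjorn} guarantee a continuous extension of $u_D$ to $\overline{\Omega}$ with $u_D(x_0) = \theta(x_0)$. Since $w$ is a supersolution with Sobolev trace $\theta$, one has $\min\{u_D, w\} - \theta \in W^{1,p}_0(\Omega)$, and Proposition~\ref{compaosta} applied to $u_D$ (solution of the obstacle problem $\K_{-\infty,\theta}$) and $w$ (supersolution) yields $u_D \le w$ a.e., hence $\liminf w \ge \theta(x_0)$.

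For the \emph{upper bound}, the argument splits into two cases. \emph{Inactive obstacle}, $\theta(x_0) > \psi(x_0)$: the lower bound plus continuity of $\psi$ and $w$ force $w > \psi$ on a ball $V = B(x_0, r) \cap \Omega$; by Proposition~\ref{esoluzione}, $L_\oF w = 0$ on $V$. Since the only portion of $\partial V$ accumulating at $x_0$ is the subset $B(x_0, r) \cap \partial\Omega$, and there the Sobolev trace of $w$ is the continuous function $\theta$, a further application of Theorem~\ref{teobjorn} at $x_0$ yields $\lim_{\Omega \ni x \to x_0} w(x) = \theta(x_0)$.

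\emph{Active obstacle}, $\theta(x_0) = \psi(x_0)$: I argue by perturbation. For $\delta > 0$, let $w_\delta$ solve $\K_{\psi - \delta, \theta}$ (nonempty as it contains $w$, existence by Theorem~\ref{existostacolo}); by the inactive-case argument applied to $w_\delta$, it is continuous at $x_0$ with $w_\delta(x_0) = \theta(x_0)$. The monotonicity \eqref{B2} of $B$ in $t$ implies $w_\delta + \delta$ is again a supersolution, and clearly $w_\delta + \delta \in \K_{\psi, \theta + \delta}$. Hence the solution $\bar w_\delta$ of $\K_{\psi, \theta + \delta}$ satisfies $\bar w_\delta \le w_\delta + \delta$ by Corollary~\ref{corsuper}, while $\min\{w, \bar w_\delta\} - \theta$ has vanishing Sobolev trace and thus lies in $W^{1,p}_0(\Omega)$ (Lipschitz boundary), so Proposition~\ref{compaosta} yields $w \le \bar w_\delta \le w_\delta + \delta$. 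Continuity of $w_\delta$ at $x_0$ then provides $\limsup w(x) \le \theta(x_0) + \delta$, and $\delta \to 0^+$ closes the argument. The main difficulty is precisely this active-obstacle case, where the double perturbation $\psi \mapsto \psi - \delta$, $\theta \mapsto \theta + \delta$ is the crucial trick: it exploits monotonicity \eqref{B2} to enlarge the admissible class and thereby reduce to the tractable inactive regime.
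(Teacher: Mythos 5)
Your proof is correct and follows essentially the same strategy as the paper: lower bound by comparison with the Dirichlet solution (the paper's $\tilde\theta$ is your $u_D$), continuity at inactive-obstacle points via Proposition~\ref{esoluzione} and Theorem~\ref{teobjorn}, and a perturbation plus comparison to handle active-obstacle points. The only (immaterial) difference is in the last step, where the paper simply raises the boundary datum to $\tilde\theta+\epsilon$ while keeping the obstacle $\psi$ --- which already makes the obstacle inactive at $x_0$ --- whereas you additionally lower the obstacle to $\psi-\delta$ and then route the comparison through Corollary~\ref{corsuper}; both reductions to the inactive case are valid.
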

\begin{proof}
 If we want $\K_{\psi,\theta}$ to be nonempty, it is necessary to assume $\psi(x_0)\leq \theta(x_0)$ for all $x_0\in \partial \Omega$.

Let $\tilde \theta$ be the unique solution to the Dirichlet problem relative to $\theta$ on $\Omega$. Then theorem \ref{teobjorn} guarantees that $\tilde \theta \in C^0(\overline\Omega)$ and the comparison principle allow us to conclude that $w(x)\geq\tilde\theta(x)$ everywhere in $\Omega$.

Suppose first that $\psi(x_0)<\theta(x_0)$, then in a neighborhood $U$ of $x_0$ ($U\subset \Omega$) $w(x)\geq \tilde \theta(x)>\psi(x)$. By Proposition \ref{esoluzione}, $L_{\oF}w=0$ on $U$, and so by Theorem \ref{teobjorn} $w$ is continuous in $x_0$.

If $\psi(x_0)=\theta(x_0)$, consider $w_\epsilon$ the solutions to the obstacle problem $\K_{\tilde\theta+\epsilon,\psi}$. By the same argument as above we have that $w_\epsilon$ are all continuous at $x_0$, and by the comparison principle $w(x)\leq w_\epsilon(x)$ for every $x\in \Omega$ (recall that both functions are continuous in $\Omega$). So we have on one hand:
\begin{gather*}
 \liminf_{x\to x_0} w(x)\geq \liminf_{x\to x_0} \psi(x)=\psi(x_0)=\theta(x_0)
\end{gather*}
and on the other:
\begin{gather*}
 \limsup_{x\to x_0} w(x)\leq \limsup_{x\to x_0} w_{\epsilon}(x)=\theta(x_0)+\epsilon
\end{gather*}
this proves that $w$ is continuous in $x_0$ with value $\theta(x_0)$.
\end{proof}

Finally, we present some results on convergence of supersolutions and their approximation with regular ones.
\begin{prop}\label{convergence}
Let $w_j$ be a sequence of supersolutions on some
open set $\Omega$. 
Suppose that either $w_j\uparrow w$ or $w_j
\downarrow w$ pointwise monotonically, for some locally bounded
$w$. Then, $w$ is a supersolution and there exists a subsequence of $\{w_j\}$ that converges locally strongly in $W^{1,p}$ to $w$ on each compact subset of $\Omega$. Furthermore, if $\{u_j\}$ is a
sequence of solutions of $L_{\oF}u_j=0$ which are locally uniformly bounded in $L^\infty$ and pointwise convergent to
some $u$, then $u$ solves $L_{\oF}u=0$ and, up to choosing a subsequence, $\{u_j\}$ converges to $u$ locally strongly on each compact subset of $\Omega$. 
\end{prop}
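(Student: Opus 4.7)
The plan is to handle both assertions in parallel via three ingredients: a uniform $W^{1,p}_\loc$ bound, extraction of a weak limit, and a strict-monotonicity argument upgrading weak to strong convergence.

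\medskip
\noindent\textbf{Step 1 (weak compactness).} Monotonicity of $\{w_j\}$ together with $w\in L^\infty_\loc$ gives a local uniform $L^\infty$ bound on $\{w_j\}$; by hypothesis the same is true for $\{u_j\}$. Proposition \ref{caccio} then yields a uniform $W^{1,p}_\loc$ bound. Via Rellich--Kondrachov we extract a subsequence with $w_j\rightharpoonup w$ weakly in $W^{1,p}_\loc$ and strongly in $L^p_\loc$, and by \eqref{A2} we may further assume $A(\nabla w_j)\rightharpoonup \chi$ weakly in $L^{p'}_\loc$ for some $\chi$. The same procedure applies to $\{u_j\}$.

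\medskip
\noindent\textbf{Step 2 (vanishing monotonicity defect).} Fix $\Omega_0\Subset\Omega_1\Subset\Omega$ and a cutoff $\eta\in C^\infty_c(\Omega_1)$ with $0\le\eta\le 1$ and $\eta\equiv 1$ on $\Omega_0$. In the equation case, plug the \emph{signed} test $\phi\doteq\eta^p(u_j-u)\in W^{1,p}_0(\Omega_1)$ into $\langle\oF(u_j),\phi\rangle=0$; expanding $\nabla\phi$ and using $u_j\to u$ in $L^p_\loc$ together with the uniform $L^{p'}_\loc$ bound on $A(\nabla u_j)$ from \eqref{A2} and the local $L^\infty$ bound on $B(\cdot,u_j)$ from \eqref{B1+}, all the terms vanish in the limit except $\int\eta^p\ps{A(\nabla u_j)}{\nabla u_j-\nabla u}$. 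Combined with $\int\eta^p\ps{A(\nabla u)}{\nabla u_j-\nabla u}\to 0$ (weak convergence), this produces
\begin{equation}\label{plan_monodefect}
\int\eta^p\ps{A(\nabla u_j)-A(\nabla u)}{\nabla u_j-\nabla u}\longrightarrow 0.
\end{equation}
For the increasing supersolution case $w_j\uparrow w$, the nonnegative test $\phi=\eta^p(w-w_j)\ge 0$ is admissible in the weak supersolution inequality for $w_j$ and yields the upper bound $\int\eta^p\ps{A(\nabla w_j)}{\nabla w_j-\nabla w}\le o(1)$, again producing \eqref{plan_monodefect}. The decreasing case $w_j\downarrow w$ is subtler: the direct test $\phi=\eta^p(w_j-w)\ge 0$ yields the opposite inequality, so one argues via a Minty--Browder scheme, using a Caccioppoli-type test $\phi=\eta^p(C-w_j)\ge 0$ (with $C\ge\|w_1\|_{L^\infty(\Omega_1)}$) to control $\int\eta^p\ps{A(\nabla w_j)}{\nabla w_j}$ from above, and then probing the monotonicity inequality $\int\eta^p\ps{A(\nabla w_j)-A(V)}{\nabla w_j-V}\ge 0$ with $V=\nabla w+\varepsilon\xi$ and $\varepsilon\to 0$ to identify $\chi=A(\nabla w)$ and ultimately recover \eqref{plan_monodefect}.

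\medskip
\noindent\textbf{Step 3 (strong convergence and passage to the limit).} The integrand in \eqref{plan_monodefect} is nonnegative by \eqref{M}, so along a further subsequence it converges a.e.\ to zero on $\Omega_1$; strict monotonicity of $A$ then forces $\nabla w_{j_k}\to\nabla w$ a.e.\ on $\Omega_0$. Uniform $L^p$ bounds combined with the growth conditions \eqref{A1}--\eqref{A2} give uniform integrability of $|\nabla w_{j_k}|^p$ (alternatively, one verifies $\|\nabla w_{j_k}\|_{L^p(\Omega_0)}\to\|\nabla w\|_{L^p(\Omega_0)}$ from \eqref{plan_monodefect} and invokes uniform convexity of $L^p$), upgrading a.e.\ to strong $L^p_\loc$ convergence of the gradients, and hence strong $W^{1,p}_\loc$ convergence. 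This in turn yields $A(\nabla w_{j_k})\to A(\nabla w)$ in $L^{p'}_\loc$ and $B(\cdot,w_{j_k})\to B(\cdot,w)$ in $L^\infty_\loc$ (by continuity and \eqref{A2}, \eqref{B1+}), allowing passage to the limit in $\int\ps{A(\nabla w_{j_k})}{\nabla\phi}+\int B(\cdot,w_{j_k})\phi$ for every $\phi\in C^\infty_c(\Omega)$, and concluding that $w$ is a supersolution (resp.\ $u$ a solution). The main obstacle is the decreasing supersolution case of Step 2, where the natural test function gives the monotonicity defect with the wrong sign and a careful Minty--Browder argument, rather than a direct monotonicity computation, is required to identify $\chi$ and recover \eqref{plan_monodefect}.
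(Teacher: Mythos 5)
Your Step 1 and the increasing/equation cases of Steps 2--3 reproduce, in essence, the paper's own argument: the paper tests with $\eta(w-w_j)$ (resp. $\eta(u-u_j)$, no sign needed for the equation), obtains the vanishing monotonicity defect, and invokes a lemma of Browder to upgrade weak $W^{1,p}$ plus strong $L^p$ convergence to strong $W^{1,p}$ convergence. (One small slip in your Step 3: boundedness in $L^p$ does \emph{not} give uniform integrability of $|\nabla w_{j_k}|^p$; you need either Browder's lemma or the a.e.\ gradient convergence combined with convergence of the energies $\int\eta^p\ps{A(\nabla w_j)}{\nabla w_j}$ and a Scheff\'e--Vitali argument.) The genuine problem is the decreasing case, and your own diagnosis of the obstruction is correct: the only admissible nonnegative test, $\phi=\eta^p(w_j-w)$, yields
$$
\liminf_j\int\eta^p\ps{A(\nabla w_j)}{\nabla w_j-\nabla w}\ \ge\ 0,
$$
which is the trivial direction. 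The Minty--Browder repair you sketch cannot be closed with the ingredients you list: to identify $\chi=A(\nabla w)$, Minty's argument needs the asymptotic energy inequality $\limsup_j\int\eta^p\ps{A(\nabla w_j)}{\nabla w_j}\le\int\eta^p\ps{\chi}{\nabla w}$, whereas the Caccioppoli-type test $\phi=\eta^p(C-w_j)$ only produces a $j$-uniform bound $\int\eta^p|\nabla w_j|^p\le C'$, which is useless in the limit. Moreover, even if $\chi=A(\nabla w)$ were identified, the strong convergence asserted in the statement still requires the monotonicity defect to vanish, which again needs the missing limsup inequality. As written, the decreasing case is not proved.

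The paper sidesteps the sign obstruction entirely by a non-variational route: on each piece $\Omega_n$ of a smooth exhaustion, let $u_n$ solve the obstacle problem $\K_w$ with obstacle and boundary datum $w$ (note $w\in W^{1,p}_\loc$ by Proposition \ref{caccio}); since each $w_j$ is a supersolution lying above the obstacle, Corollary \ref{corsuper} and Proposition \ref{compaosta} give $w\le u_n\le w_j$ on $\Omega_n$ for every $j$, and letting $j\to\infty$ forces $u_n=w$, so $w$ is a supersolution because it solves an obstacle problem. If you insist on a purely variational proof of the decreasing case you need a genuinely different device (for instance the truncation and a.e.-gradient-convergence arguments used for decreasing sequences in \cite{HKM}); the obstacle problem is the shortest path here.
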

\begin{proof}
Suppose that $w_j\uparrow w$. Up to changing the representative in the Sobolev class, by Theorem \ref{regularity} we can assume that $w_j$ is lower semicontinuous. Hence, it has minimum on compact subsets of $\Omega$. Since $w$ is locally bounded and the convergence is monotone up to a set of zero measure, the sequence $\{w_j\}$ turns out to be locally bounded in the $L^\infty$-norm. The elliptic estimate in Proposition
\ref{caccio} ensures that $\{w_j\}$ is locally bounded in
$\wup$. Fix a smooth exhaustion $\{\Omega_n\}$ of $\Omega$. For
each $j$, up to passing to a subsequence, $w_j \rightharpoonup z_n$ weakly in
$W^{1,p}(\Omega_n)$ and strongly in $L^p(\Omega_n)$. By Riesz
theorem, $z_j=w$ for every $j$, hence $w\in W^{1,p}_\loc(\Omega)$.
With a Cantor argument, we can select a sequence, still called
$w_j$, such that $w_j$ converges to $w$ both weakly in
$W^{1,p}(\Omega_n)$ and strongly in $L^p(\Omega_n)$ for every
fixed $n$. To prove that $w$ is a supersolution, fix $0\le\eta \in
C^\infty_c(\Omega)$, and choose a smooth relatively compact open
set $\Omega_0\Subset \Omega$ that contains the support of $\eta$.
Define $M \doteq  \max_j \norm{w_j}_{W^{1,p}(\Omega_0)}<+\infty$. Since
$w_j$ is a supersolution and $w\ge w_j$ for every $j$,
$$
<\oF(w_j), \eta(w-w_j)> \ \ge 0.
$$
Using \eqref{A1} we can rewrite the above inequality as follows:
\begin{equation}\label{primostep}
\int \ps{A(\nabla w_j)}{\eta(\nabla w-\nabla w_j)} \ge -\int
\Big[B(x, w_j)+ \ps{A(\nabla w_j)}{\nabla \eta}\Big](w-w_j).
\end{equation}
Using \eqref{B1}, \eqref{A2} and suitable H\"older inequalities,
the RHS can be bounded from below with the following quantity
\begin{equation}
\begin{array}{l}
\disp -b_1\norm{\eta}_{L^\infty(\Omega)}\int_{\Omega_0}(w-w_j) -
b_2\norm{\eta}_{L^\infty(\Omega)}
\int_{\Omega_0}|w_j|^{p-1}|w-w_j| \\[0.4cm]
\disp  - a_2\norm{\nabla\eta}_{L^\infty(\Omega)}
\int_{\Omega_0}|\nabla
w_j|^{p-1}|w-w_j| \\[0.4cm]
\ge -\norm{\eta}_{C^1(\Omega)}\Big[ b_1|\Omega_0|^{\frac{p-1}{p}}
- b_2 M^{p-1} - a_2M^{p-1}\Big]\norm{w-w_j}_{L^p(\Omega_0)}
\rightarrow 0
\end{array}
\end{equation}
as $j\ra +\infty$. Combining with \eqref{primostep} and the fact
that $w_j \rightharpoonup w$ weakly on $W^{1,p}(\Omega_0)$, by
assumption \eqref{M} the following inequality holds true:
\begin{equation}\label{intazero}
0 \le \int \eta \ps{A(\nabla w)-A(\nabla w_j)}{\nabla w-\nabla
w_j} \le o(1) \qquad \text{as } \ j\ra +\infty.
\end{equation}
By a lemma due to F. Browder (see \cite{browder}, p.13 Lemma 3),
the combination of assumptions $w_j \rightharpoonup w$ both
locally weakly in $W^{1,p}$ and locally strongly in $L^p$, and
\eqref{intazero} for every $0\le \eta\in C^\infty_c(\Omega)$,
implies that $w_j \rightarrow w$ locally strongly in $W^{1,p}$.
Since the operator $\oF$ is weakly continuous, as shown in the
proof of Theorem \ref{existostacolo}, this implies that
$$
0 \ \le \  < \oF(w_j), \eta > \ \longrightarrow \ < \oF(w),\eta >,
$$
hence $L_{\oF}w \le 0$, as required.\\
The case $w_j \downarrow w$ is simpler. By the elliptic
estimate, $w\in W^{1,p}_\loc(\Omega)$, being locally bounded by assumption. Let $\{\Omega_n\}$ be a smooth exhaustion of $\Omega$, and let
$u_n$ be a solution of the obstacle problem relative to $\Omega_n$
with obstacle and boundary value $w$. Then, by \eqref{corsuper}
$w\le u_n \le w_j|_{\Omega_n}$, and letting $j\ra +\infty$ we
deduce that $w=u_n$ is a supersolution on $\Omega_n$, being a
solution of an
obstacle problem.\\
The proof of the last part of the Proposition follows exactly the
same lines as the case $w_j\uparrow w$ done before. Indeed, by the uniform local boundedness, the elliptic estimate gives $\{u_j\} \subset W^{1,p}_\loc(\Omega)$. Furthermore, in
definition $<\oF(u_j), \phi> \ =0$ we can still use as test
function $\phi=\eta(u-u_j)$, since no sign of $\phi$ is
required.
\end{proof}
A couple of corollaries follow from this theorem. It is in fact easy to see that we can relax the assumption of local boundedness on $w$ if we assume a priori $w\in W^{1,p}_\loc(\Omega)$, and moreover with a simple trick we can prove that also local uniform convergence preserves the supersolution property, as in \cite{HKM}, Theorem 3.78.
\begin{cor}
 Let $w_j$ be a sequence of supersolutions locally uniformly converging to $w$, then $w$ is a supersolution.
\end{cor}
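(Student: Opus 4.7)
The plan is to reduce the case of local uniform convergence to the monotone convergence case already established in Proposition \ref{convergence}. The key observation is that, by assumption \eqref{B2}, adding a positive constant to a supersolution yields another supersolution: if $L_{\oF} w \le 0$ and $c>0$, then in the weak sense
\[
L_{\oF}(w+c) = \diver(A(\nabla w)) - B(\cdot, w+c) \le \diver(A(\nabla w)) - B(\cdot, w) = L_{\oF} w \le 0,
\]
since $\nabla(w+c)=\nabla w$ and $B(x,\cdot)$ is non-decreasing.

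Fix a relatively compact open set $\Omega_0 \Subset \Omega$. By local uniform convergence I extract a subsequence, still denoted $\{w_j\}$, such that $\|w_j - w\|_{L^\infty(\Omega_0)} < 2^{-j-2}$; in particular $w \in L^\infty(\Omega_0)$. Then I define the approximating sequence
\[
v_j \doteq w_j + 2^{-j} \qquad \text{on } \Omega_0.
\]
By the remark above, each $v_j$ is still a supersolution on $\Omega_0$. Moreover $\{v_j\}$ is decreasing: indeed, using $|w_j - w_{j+1}| \le |w_j-w|+|w_{j+1}-w| < 2^{-j-2}+2^{-j-3} < 2^{-j-1}$, one obtains
\[
v_j - v_{j+1} \,=\, (w_j - w_{j+1}) + (2^{-j}-2^{-j-1}) \,>\, -2^{-j-1}+2^{-j-1} \,=\, 0.
\]
Finally $v_j \downarrow w$ pointwise on $\Omega_0$, as $|v_j-w|\le |w_j-w|+2^{-j}\to 0$.

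At this point I invoke Proposition \ref{convergence} (the decreasing case, which only requires local boundedness of the limit, verified above) to conclude that $w$ is a supersolution on $\Omega_0$. Since $\Omega_0 \Subset \Omega$ was arbitrary, $w$ is a supersolution on $\Omega$. There is no real obstacle here, only the mild bookkeeping to guarantee monotonicity of the shifted sequence; the choice of the shift $2^{-j}$ is dictated by the need to have the extra constant strictly dominate the oscillation $|w_j - w_{j+1}|$.
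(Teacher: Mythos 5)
Your proposal is correct and follows essentially the same route as the paper: extract a subsequence with geometrically small uniform error, add a suitably chosen constant (you use $2^{-j}$ against an error $<2^{-j-2}$, the paper uses $3\cdot 2^{-j}$ against an error $\le 2^{-j}$) to turn the sequence into a decreasing one with the same limit, and then invoke the monotone-convergence case of Proposition \ref{convergence}. The only differences are the bookkeeping constants, which are immaterial.
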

\begin{proof}
 The trick is to transform local uniform convergence into monotone convergence. Fix any relatively compact $\Omega_0 \Subset \Omega$ and a subsequence of $w_j$ (denoted for convenience by the same symbol) with $\norm{w_j-w}_{L^\infty(\Omega_0)} \leq 2^{-j}$. The modified sequence of supersolutions $\tilde w_j \doteq w_j + \frac 3 2 \sum_{k=j}^\infty 2^{-k}= w_j+3 \times 2^{-j}$ is easily seen to be a monotonically decreasing sequence on $\Omega_0$, and thus its limit, still $w$ by construction, is a supersolution on any $\Omega_0$ by the previous proposition. The conclusion follows from the arbitrariness of $\Omega_0$.
\end{proof}

Now we prove that with continuous supersolutions we can approximate every supersolution.


\begin{prop}\label{approconti}
 For every supersolution $w\in W^{1,p}_\loc(\Omega)$, there exists a sequence $w_n$ of continuous supersolutions which converge monotonically from below and in $W^{1,p}_\loc(\Omega)$ to $w$. The same statement is true for subsolutions with monotone convergence from above.
\end{prop}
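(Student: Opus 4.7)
The plan is to use obstacle problems with continuous (even Lipschitz) obstacles approximating $w$ from below, then to glue the solutions via Lemma \ref{pasting}. First, by Proposition \ref{minsupmaxsub} the truncations $w^{(k)}=\min\{w,k\}$ are supersolutions with $w^{(k)}\nearrow w$, so via a diagonal procedure (together with Proposition \ref{convergence}) we may assume $w$ is locally bounded. Replacing $w$ with its lower semicontinuous representative from Theorem \ref{regularity}(i), $w$ becomes locally bounded on both sides, and a standard Baire-type approximation of lsc functions yields Lipschitz maps $\psi_n:\Omega\to\mathbb{R}$ with $\psi_n\le w$ pointwise and $\psi_n\nearrow w$. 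Fix a smooth exhaustion $\Omega_n\Subset\Omega$ with Lipschitz boundary, and for each $n$ let $v_n$ be the (unique) solution of the obstacle problem $\K_{\psi_n,w}$ on $\Omega_n$ given by Theorem \ref{existostacolo}. By Corollary \ref{corsuper}, $v_n$ is a supersolution on $\Omega_n$ with $\psi_n\le v_n$; since $w$ itself lies in $\K_{\psi_n,w}$, comparison (Proposition \ref{compaosta}) forces $v_n\le w$; and Theorem \ref{continuosta} grants a continuous representative of $v_n$ in the interior of $\Omega_n$.

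Define $w_n=v_n$ on $\Omega_n$ and $w_n=w$ on $\Omega\setminus\Omega_n$. The boundary condition $v_n-w\in W^{1,p}_0(\Omega_n)\subset X^p_0(\Omega_n)$, combined with $v_n\le w$, is exactly what is required to apply Lemma \ref{pasting}, so $w_n$ is a supersolution on $\Omega$. Since $\psi_n\le w_n\le w$ on $\Omega_n$ and $\psi_n\nearrow w$ pointwise, $w_n\to w$ pointwise on $\Omega$. Replacing $w_n$ with $\max\{w_1,\dots,w_n\}$, which is still a (continuous) supersolution by Proposition \ref{minsupmaxsub}, forces the monotone convergence required by the statement, and local $W^{1,p}$-convergence then follows from Proposition \ref{convergence}. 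The argument for subsolutions is obtained by the duality trick $\widetilde A(X)=-A(-X)$, $\widetilde B(x,t)=-B(x,-t)$ already used in Proposition \ref{minsupmaxsub}.

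The main technical obstacle is continuity of $w_n$ on the whole of $\Omega$: the gluing above agrees with $w$ outside $\Omega_n$, but $w$ itself need not be continuous there. The remedy is to enlarge slightly the domain and enforce a continuous boundary trace. Namely, solve instead $\K_{\psi_n,\psi_n}$ on some $\Omega'_n\Supset\Omega_n$ with Lipschitz boundary; by Corollary \ref{obs_bc}, the resulting solution $v'_n$ is continuous up to $\partial\Omega'_n$ with trace $\psi_n$ there. One then forms $\min\{v'_n,w\}$ on $\Omega'_n$ and pastes it with $w$ outside via Lemma \ref{pasting}. Checking that the trace condition $\min\{\min\{v'_n,w\}-w,0\}\in X^p_0(\Omega'_n)$ holds and that the pasted function is continuous across $\partial\Omega'_n$ (using $v'_n=\psi_n\le w$ on $\partial\Omega'_n$) is the delicate compatibility step and constitutes the heart of the argument; with this in hand, the conclusion follows as above.
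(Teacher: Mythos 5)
Your overall strategy---reduce to a locally bounded, lower semicontinuous $w$, approximate it from below by continuous obstacles $\psi_n\nearrow w$, and solve obstacle problems $\K_{\psi_n,w}$ whose solutions are continuous, supersolutions, and squeezed between $\psi_n$ and $w$---is the same as the paper's. But the way you assemble the local solutions into global approximants has two genuine gaps. First, your $w_n$ are never continuous on all of $\Omega$: whatever you do on $\Omega_n$ (or on the enlarged $\Omega_n'$), you ultimately set the function equal to $w$ on $\Omega\setminus\Omega_n'$, where $w$ is only lower semicontinuous; the proposed remedy with $\K_{\psi_n,\psi_n}$ on $\Omega_n'$ merely pushes the discontinuity from $\partial\Omega_n$ out to $\Omega\setminus\Omega_n'$ without removing it, and since Theorem \ref{existostacolo} only solves the obstacle problem on relatively compact domains, no single $\Omega_n'$ can exhaust $\Omega$. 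The paper avoids this by taking a \emph{locally finite covering} $\{\Omega_n\}$ of the whole of $\Omega$ and, for each fixed obstacle level $m$, defining the approximant as $\min_n w_m^{(n)}$ over \emph{all} members of the cover: every point of $\Omega$ lies in the interior of some $\Omega_n$, local finiteness makes this locally a minimum of finitely many continuous supersolutions, and Proposition \ref{minsupmaxsub} then yields a continuous global supersolution with no residual copy of $w$ anywhere.

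Second, the step ``replacing $w_n$ with $\max\{w_1,\dots,w_n\}$, which is still a supersolution by Proposition \ref{minsupmaxsub}'' is wrong: that proposition says the \emph{minimum} of supersolutions is a supersolution and the \emph{maximum} of \emph{sub}solutions is a subsolution; the maximum of two supersolutions is in general not a supersolution. Monotonicity of the approximating sequence must instead be extracted from the comparison principle for obstacle problems (Proposition \ref{compaosta}, Corollary \ref{corsuper}): since $\psi_n\le\psi_{n+1}$, the solution at level $n+1$ is a supersolution admissible for the level-$n$ problem, hence dominates the level-$n$ solution, which is the smallest such supersolution. This is exactly how the paper obtains the monotone convergence, and it is compatible with the $\min$-over-the-cover construction, whereas your $\max$ is not.
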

\begin{proof}
Since every $w$ has a lower-semicontinuous representative, it can be assumed to be locally bounded from below, and since $w^{(m)}=\min\{w,m\}$ is a supersolution (for $m\geq 0$) and converges monotonically to $w$ as $m$ goes to infinity, we can assume without loss of generality that $w$ is also bounded above.

Let $\Omega_n$ be a locally finite relatively compact open covering on $\Omega$. Since $w$ is lower semicontinuous it is possible to find a sequence $\phi_m$ of smooth function converging monotonically from below to $w$ (see \cite{HKM}, Section 3.71 p. 75). Let $w_{m}^{(n)}$ be the solution to the obstacle problem $\K_{w,\phi_m}$ on $\Omega_n$. and define $\bar w_m \doteq \min_{n}\{w_m^{(n)}\}$. Thanks to the local finiteness of the covering $\Omega_n$, $\bar w_m$ is a continuous supersolution, being locally the minimum of a finite family of continuous functions. Monotonicity of the convergence is an easy consequence of the comparison principle for obstacle problems, i.e. Proposition \ref{compaosta}. To prove convergence in the local $W^{1,p}$ sense, the steps are pretty much the same as for Proposition \ref{convergence}, and the statement for subsolutions follows from the usual trick.
\end{proof}

\begin{rem}\label{rem_holder}
\rm{With similar arguments and up to some minor technical difficulties, one could strenghten the previous proposition and prove that every supersolution can be approximated by locally H\"older continuous supersolutions.}
\end{rem}

\section{Proof of Theorem \ref{mainteo}}
\begin{teo}
Let $M$ be a Riemannian manifold, and let $A,B$ satisfy the set of
assumptions $\Se$. Define $\oA,\oB,\oF$ as in \eqref{definABF},
and $L_{\oA},L_{\oF}$ accordingly. Then, the following properties
are equivalent:
\begin{itemize}
\item[$(1)$] $(L)$ for $\hol_\loc$ functions,
\item[$(2)$] $(L)$ for $L^\infty$ functions,
\item[$(3)$] $(K)$.
\end{itemize}
\end{teo}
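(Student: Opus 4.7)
I would prove the cyclic chain of implications $(2) \Rightarrow (1) \Rightarrow (3) \Rightarrow (2)$. The implication $(2) \Rightarrow (1)$ is immediate, since the class of bounded locally H\"older $W^{1,p}_\loc$ functions is contained in that of bounded $W^{1,p}_\loc$ functions, so if every non-negative subsolution in the larger class is constant then so is every one in the smaller class.

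For $(3) \Rightarrow (2)$, my plan is a comparison argument with shifted \Ka potentials. Let $u$ be a bounded non-negative subsolution and set $u^\star := \esssup_M u$. For a smooth compact $K$, Lipschitz $\Omega \Supset K$, and $\epsilon > 0$, take the \Ka potential $w = w_\epsilon$ given by $(K)$. Since $B(x,\cdot)$ is monotone non-decreasing by \eqref{B2}, the shift $w + c$ is still a supersolution on $M \setminus K$ for every $c \ge 0$. Choosing $c$ just above $\esssup_{K'} u$ for a suitable enlargement $K' \Supset K$ (to control the $W^{1,p}_0$-trace of $u$ on $\partial K$), and using the exhaustion property of $w$ on $\partial D_n$ of a smooth exhaustion $\{D_n\}$ of $M$ to dominate $u^\star$ for $n$ large, the comparison Proposition \ref{teo_comp} applied on $D_n \setminus K$ gives, after $n \to \infty$, $u \le c + w$ a.e.\ on $M \setminus K$. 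Restricting to $\Omega \setminus K$ yields $u \le c + \epsilon$; sending $\epsilon \to 0^+$ and then shrinking $K'$ to a single point shows $\esssup_V u = u^\star$ for every open neighbourhood $V$ of every point. Finally, $v := u^\star - u \ge 0$ is, via the symmetry trick $\tilde A(X) := -A(-X)$ of Proposition \ref{minsupmaxsub}, a $L_{\tilde \oA}$-supersolution, so the Harnack alternative in Theorem \ref{harnack} forces $v \equiv 0$, i.e.\ $u \equiv u^\star$.

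For $(1) \Rightarrow (3)$, the strategy is a two-stage construction combined with a dichotomy forced by the Liouville property. Fix $(K,\Omega,\epsilon)$ and a smooth exhaustion $\{D_n\}$ of $M$ with $\Omega \Subset D_1$. \emph{Stage I}: via Theorems \ref{existostacolo}, \ref{continuosta} and Corollary \ref{obs_bc}, solve $L_\oF g = 0$ on $\Omega \setminus \overline K$ with $g = 0$ on $\partial K$ and $g = \epsilon$ on $\partial \Omega$; comparison gives $0 \le g \le \epsilon$. \emph{Stage II}: for each $n$, solve $L_\oF h_n = 0$ on $D_n \setminus \overline \Omega$ with $h_n = \epsilon$ on $\partial \Omega$ and $h_n = n$ on $\partial D_n$; comparison gives $\epsilon \le h_n \le n$ and the monotonicity $h_n \le h_{n+1}$. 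By Proposition \ref{convergence}, either $h_n \to h_\infty$ locally in $W^{1,p}(M \setminus \overline \Omega)$ with $h_\infty$ bounded (the \emph{bounded case}), or $h_n \to +\infty$ locally uniformly (the \emph{divergent case}). In the bounded case I would assemble the pieces $0$ (on $K$), $g$ (on $\Omega \setminus K$), $h_\infty$ (on $M \setminus \overline \Omega$) into a globally defined continuous function on $M$, which, by the matching of boundary values at $\partial K$ and $\partial \Omega$ and by Theorem \ref{regularity}(ii), is bounded, non-negative, non-constant and locally H\"older; iterated use of the pasting Lemma \ref{pasting} (with Remark \ref{bordopercontinue} supplying the $X^p_0$-trace conditions at both interfaces) then exhibits it as a subsolution, contradicting $(L)$ for $\hol_\loc$. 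In the divergent case I would construct the \Ka potential as the smallest supersolution on $M \setminus K$ dominating an obstacle equal to $g$ on $\Omega \setminus K$ and diverging with $h_n$ outside; the minimality from Corollary \ref{corsuper}, together with the existence of an upper-barrier supersolution obtained by pasting $g$ with a suitable extension via Lemma \ref{pasting}, delivers the bound $w \le \epsilon$ on $\Omega \setminus K$, while the exhaustion property is inherited from the obstacle.

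\emph{Main obstacle.} The principal difficulty is in the direction $(1) \Rightarrow (3)$: the non-homogeneity of $L_\oF$ precludes the rescaling argument available for the $p$-Laplacian in \cite{valto}, and in the bounded case the sign of the gradient jump at $\partial \Omega$ across the paste of $g$ and $h_\infty$ must be controlled carefully (via the choice of boundary data and the pasting machinery) to guarantee that the glued function is indeed a subsolution rather than a supersolution. The smallest-supersolution characterization in Corollary \ref{corsuper}, the monotone-convergence Proposition \ref{convergence}, the approximation Proposition \ref{approconti}, the boundary regularity Corollary \ref{obs_bc}, and the Hölder-regularity Theorem \ref{regularity}(ii) are the key technical ingredients that make the construction go through.
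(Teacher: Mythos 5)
Your overall cycle $(2)\Rightarrow(1)\Rightarrow(3)\Rightarrow(2)$ is a legitimate strategy, and two of its three legs are in good shape. $(2)\Rightarrow(1)$ is indeed immediate. Your $(3)\Rightarrow(2)$ is essentially correct and is in fact a pleasant variant of the paper's argument: the paper only proves $(3)\Rightarrow(1)$, using continuity of $u$ to locate the set $\{u>\eta+w\}$ and its boundary values, whereas your comparison of $u$ with $c+w$ on $D_n\setminus K$ (the trace condition holds because $\min\{c+w-u,0\}$ vanishes a.e.\ near both boundary components), followed by letting $\Omega$ exhaust $M$ to get $u^\star\le \esssup_{K'}u$ for arbitrarily small $K'$ and by the Harnack alternative applied to $u^\star-u$ for $L_{\widetilde{\oA}}$, works directly for $L^\infty$ subsolutions. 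One slip there: by \eqref{B3} the constant $\eps>0$ is a supersolution, not a subsolution, so comparison gives $h_n\ge 0$ rather than $h_n\ge\eps$.

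The genuine gap is in $(1)\Rightarrow(3)$, which is the heart of the theorem. First, Stage II does not produce a monotone sequence: to get $h_n\le h_{n+1}$ by Proposition \ref{teo_comp} on $D_n\setminus\overline{\Omega}$ you would need $h_{n+1}\ge n$ on $\partial D_n$, and comparison only yields $0\le h_{n+1}\le n+1$; without monotonicity (or local uniform bounds) Proposition \ref{convergence} does not apply and your bounded/divergent dichotomy is not established. Second, even granting a limit, the function obtained by juxtaposing $g$ and $h_\infty$ across $\partial\Omega$ is not covered by Lemma \ref{pasting}: that lemma yields the $\max$ (resp.\ $\min$) of two sub-(super-)solutions on the overlap, not a function given by different formulas on the two sides of a hypersurface, and the sign of the conormal jump of $A(\nabla\cdot)$ across $\partial\Omega$ --- which you correctly flag as the main obstacle --- is exactly what remains uncontrolled. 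Third, in the divergent case the ``smallest supersolution above an unbounded obstacle on the non-compact domain $M\setminus K$'' is not provided by Theorem \ref{existostacolo} (which requires a relatively compact domain), and the bound $w\le\eps$ on $\Omega\setminus K$ presupposes an upper barrier that is precisely the object to be constructed. The paper's mechanism is different and essential: it first shows, using the Liouville property, that the solutions with data $0$ on $\partial K$ and $1$ on $\partial\Omega_j$ decrease to $0$; it then builds the \Ka potential as the locally uniform limit of an increasing sequence of continuous supersolutions $w_n$ with $w_n=n$ near infinity, each step solving compact obstacle problems with obstacle $w_n+h_j$, invoking the Liouville property again to prove $\bar s=\bar w$ and Dini's theorem to make the increment on $\Omega_{n+1}$ smaller than $\eps 2^{-n}$. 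Note finally that these Liouville applications concern limit functions that are a priori only lower semicontinuous and bounded, which is why the paper establishes $(1)\Rightarrow(2)$ first (via the subsolution--supersolution method and H\"older regularity); a direct proof of $(1)\Rightarrow(3)$ would need the same intermediate regularization.
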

\begin{proof}
$(2)\Rightarrow (1)$ is obvious. To prove that $(1) \Rightarrow
(2)$, we follow the arguments in \cite{PRS2}, Lemma 1.5. Assume by
contradiction that there exists $0\le u\in L^\infty(M)\cap
W^{1,p}_\loc(M)$, $u \not \equiv 0$ such that $L_{\oF}u \ge 0$. We
distinguish two cases.
\begin{itemize}
\item[-] Suppose first that $B(x,u)u$ is not identically zero in
the Sobolev sense. Let $u_2 > u^\star$ be a constant. By
\eqref{B3}, $L_{\oF}u_2 \le 0$. By the subsolution-supersolution
method and the regularity Theorem \ref{regularity}, there exists $w\in \hol_\loc(M)$ such that $u\le w\le u_2$ and
$L_{\oF}w=0$. Since, by \eqref{B2}, \eqref{B3} and $u\le w$,
$B(x,w)w$ is not identically zero, then $w$ is non-constant,
contradicting property $(1)$.
\item[-] Suppose that $B(x,u)u =0$ a.e. on $M$. Since $u$ is
non-constant, we can choose a positive constant $c$ such that both
$\{u-c>0\}$ and $\{u-c<0\}$ have positive measure. By \eqref{B2},
$L_{\oF}(u-c) \ge 0$, hence by Proposition \ref{minsupmaxsub} the
function $v=(u-c)_+ = \max\{u-c,0\}$ is a non-zero subsolution.
Denoting with $\chi_{\{u<c\}}$ the indicatrix of $\{u<c\}$, we can
say that $L_{\oF}v \ge 0 = \chi_{\{u<c\}}v^{p-1}$. Choose any
constant $u_2
> v^\star$. Then, clearly $L_{\oF}u_2 \le \chi_{\{u<c\}}u_2^{p-1}$. Since the potential
$$
\widetilde{B}(x,t)\doteq  B(x,t) + \chi_{\{u<c\}}(x)|t|^{p-2}t
$$
is still a Caratheodory function satisfying the assumptions in
$\Se$, by Theorem \ref{subsuper} there exists a function $w$ such
that $v\le w \le u_2$ and $L_{\oF}w = \chi_{\{u<c\}}w^{p-1}$. By
Theorem \ref{regularity}, $(ii)$ $w$ is locally H\"older
continuous and, since $\{u<c\}$ has positive measure, $w$ is non-constant, contradicting $(1)$.
\end{itemize}
To prove the implication $(3)\Rightarrow (1)$, we follow a
standard argument in potential theory, see for example
\cite{PRS2}, Proposition 1.6. Let $u \in \hol_\loc(M)\cap
W^{1,p}_\loc(M)$ be a non-constant, non-negative, bounded solution
of $L_{\oF}u \ge 0$. We claim that, by the
strong maximum principle, $u<u^\star$ on $M$. Indeed, let
$\widetilde{\mathcal{A}}$ be the operator associated with the
choice $\widetilde{A}(X) \doteq -A(-X)$. Then, since $\widetilde{A}$ satisfies all the
assumptions in $\Se$, it is easy to show that
$L_{\widetilde{\mathcal{A}}}(u^\star -u)\le 0$ on $M$. Hence, by
the Harnack inequality $u^\star -u>0$ on $M$, as desired.\\
Let $K\Subset M$ be a compact set. Consider $\eta$ such that
$0<\eta<u^{\star}$ and define the open set $\Omega_\eta \doteq
u^{-1}\{(\eta,+\infty)\}$. From $u<u^\star$ on $M$, we can choose
$\eta$ close enough to $u^\star$ so that $K\cap
\Omega_\eta=\emptyset$. Let $x_0$ be a point such that
$u(x_0)>\frac{u^\star+\eta}2$, Let $\Omega$ be such that $x_0
\in \Omega$, and choose a \Ka potential relative to the triple
$(K,\Omega, (u^\star-\eta)/2)$. Now, consider the open set $V$
defined as the connected component containing $x_0$ of the open
set
\begin{gather*}
\tilde V \doteq \{x\in \Omega_\eta \ \vert \ u(x)>\eta+ w(x) \}
\end{gather*}
Since $u$ is bounded and $w$ is an exhaustion, $V$ is relatively
compact in $M$ and $u(x)=\eta+w(x)$ on $\partial V$. Since, by
\eqref{B2}, $L_{\oF}(\eta+w)\le 0$, and $L_{\oF}u\ge 0$, this
contradicts the comparison Theorem \ref{teo_comp}.\\
We are left to the implication $(2)\Rightarrow (3)$. Fix a triple
$(K,\Omega,\eps)$, and a smooth exhaustion $\{\Omega_j\}$ of $M$
with $\Omega\Subset \Omega_1$. By the existence Theorem
\ref{existostacolo} with obstacle $\psi = -\infty$, there
exists a unique solution $h_j$ of
$$
\left\{ \begin{array}{l} L_{\oF}h_j =0 \qquad \text{on }
\Omega_j \backslash K \\[0.2cm]
h_j = 0 \quad \text{on } \partial K, \quad h_j = 1 \quad \text{on
}
\partial \Omega_j,
\end{array}
\right.
$$
and $0\le h_j\le 1$ by the comparison Theorem \ref{teo_comp}, with $h_j$ continuous up to $\partial \ton{\Omega_j \setminus K}$ thanks to Theorem \ref{teobjorn}.
Extend $h_j$ by setting $h_j=0$ on $K$ with $h_j=1$ on $M\backslash \Omega_j$. Again
by comparison, $\{h_j\}$ is a decreasing sequence which, by
Proposition \ref{convergence}, converges pointwise on $M$ to a solution
$$
h \in \cap W^{1,p}_\loc(M) \quad \text{of}\quad L_{\oF}h =0 \ \text{on } M\backslash K.
$$
Since $0 \le h \le h_j$ for every $j$, and since $h_j=0$ on $\partial K$, using Corollary \ref{obs_bc} with $\psi= -\infty$ we deduce that $h \in C^0(M)$ and $h=0$ on $K$. We claim that $h= 0$. Indeed, by Lemma \ref{pasting} $u= \max\{h,0\}$ is a
non-negative, bounded solution of $L_{\oF}u\ge 0$
on $M$. By $(1)$, $u$ has to be constant, hence the only possibility is $h = 0$.\\
Now we are going to build by induction an increasing sequence of
continuous functions $\{w_n\}$, $w_0 = 0$, such that:
\begin{enumerate}
 \item[(a)] $w_n|_{K}=0$, $w_n$ are continuous on $M$ and $L_{\oF}w_n \le 0$ on $M\backslash K$,
 \item[(b)] for every $n$, $w_n\leq n$ on all of $M$ and $w_n=n$ in a large enough neighborhood of infinity denoted by $M\backslash C_n$,
 \item[(c)] $\norm{w_n}_{L^\infty(\Omega_n)} \le \norm{w_{n-1}}_{L^\infty(\Omega_n)}+ \frac{\eps}{2^n}$.
\end{enumerate}
Once this is done, by $(c)$ the increasing sequence $\{w_n\}$ is
locally uniformly convergent to a continuous exhaustion which, by
Proposition \ref{convergence}, solves $L_{\oF} w\le 0$ on $M\setminus K$.
Furthermore,
$$
\norm{w}_{L^\infty(\Omega)} \le \sum_{n=1}^{+\infty}
\frac{\eps}{2^n} \le \eps.
$$
Note that $w\in W^{1,p}_{\loc}(M\setminus K)\cap C^0(M)$ with $w=0$ on $K$, so we can conclude immediately that $w\in W^{1,p}_\loc (M)$ and hence $w$ is the desired \Ka potential relative to $(K,\Omega,\eps)$.\\
We start the induction by setting $w_1 \doteq h_j$, for $j$ large enough
in order for property (c) to hold. Define $C_1$ in order to fix
property $(b)$. Suppose now that we have constructed $w_n$. For
notational convenience, write $\bar w=w_n$. Consider the sequence
of obstacle problems $\K_{\bar w + h_j}$ defined on $\Omega_{j+1}\backslash K$ and let $s_j$ be their solution. By Theorem \ref{continuosta} and Corollary \ref{obs_bc} we know that $s_j$ is continuous up to the boundary of its domain. Take for convenience $j$ large enough such that $C_1\subset \Omega_j$. Note
that $s_j|_{\partial K}=0$ and since the constant function $n+1$
is a supersolution, by comparison $s_j\leq n+1$ and
$s_j|_{\Omega_{j+1}\setminus \Omega_j}=n+1$. So we can extend
$s_j$ to a function defined on all of $M$ by setting it equal to
$0$ on $K$ and equal to $n+1$ on $M\backslash \Omega_{j+1}$, and
in this fashion, by Lemma \ref{pasting} $L_{\oF}s_j\le
0$ on $M\setminus K$. By Corollary \ref{corsuper}, $\{s_j\}$ is
decreasing, and so it has a pointwise limit $\bar s$ which is still a
supersolution on $M\setminus K$ by Proposition \ref{convergence}. By Theorem \ref{regularity}, $i)$ the function $\bar s$ admits a lower semicontinuous representative. We are going to
prove that $\bar s = \bar w$. First, we show that $\bar s\leq n$
everywhere. Suppose by contraddiction that this is false. Then,
since $h_j$ converges locally uniformly to zero, on the open set
$A \doteq \bar s^{-1}\{(n,\infty)\}$ the inequality $s_j>\bar w
+h_j$ is locally eventually true, so that $s_j$ is locally
eventually a solution of $L_{\oF}s_j=0$ by Proposition
\ref{esoluzione}, and so $L_{\oF}\bar s = 0$ on $A$ by Proposition
\ref{convergence}. We need to apply the Pasting Lemma \ref{pasting} to the subsolution $\bar s-n$ (defined on $A$) and the zero function. In order to do so, we shall verify that $\max\{\bar s -n,0\} \in X^p_0(A)$, where $X^p_0(A)$ is defined as in \eqref{definXp0}. This requires some care, since $\bar s$ is not a-priori continuous up to $\partial A$. By Proposition \ref{approconti}, we can choose a sequence of continuous supersolutions $\{\sigma_i\} \subset W^{1,p}_\loc(M\backslash K) \cap C^0(M\backslash \overline{K})$ that converges to $\bar s$ both pointwise monotonically and in $W^{1,p}$ on compacta of $M\backslash \overline{K}$. Since $0 \le \bar s \le s_j$ for every $j$, and the sequence $\{s_j\}$ is decreasing, it follows that $\bar s$ is continuous on $\partial K$ with zero boundary value. Therefore, $A$ has positive distance from $\partial K$, and thus $\sigma_i$ converges to $\bar s$ in $W^{1,p}_\loc(\overline{A})$. Since $\bar s$ is lower semicontinuous, $\bar s \le n$ on $\partial A$, so that $\sigma_i \le n$ on $\partial A$ for every $i$. Consequently, the continuous functions $\psi_i = \max\{\sigma_i - n- 1/i,0\}$ converge on compacta of $\overline{A}$ to $\max\{\bar s-n,0\}$, and each $\psi_i$ is zero in a neighbourhood of $\partial A$. This proves the claim that $\max\{\bar s - n,0\} \in X^p_0(A)$. By Lemma \ref{pasting} and assumptions $\Se$, the function
\begin{gather*}
 f \doteq \max\{\bar s -n, 0\}
\end{gather*}
is a non-negative, non-zero bounded solution of $L_{\oF}f \ge 0$.
By $(2)$, $f$ is constant, hence zero; therefore $\bar s \le n$. This proves that $\bar
s=\bar w=n$ on $M\backslash C_n$. As for the remaining set, a
similar argument than the one just used shows that $\bar s$ is a
solution of $L_{\oF}\bar s =0$ on the open, relatively compact set $V \doteq \{\bar s>\bar w\}$, and that $\bar s -w \in W^{1,p}_0(V)$. The comparison principle guarantees that $\bar s \leq \bar w$
everywhere, which is what we needed to prove. Now, since
$s_j\downarrow w$, by Dini's theorem the convergence is locally
uniform and so we can choose $\bar j$ large enough in such a way
that $s_{\bar j}-\bar w < \frac{\eps}{2^n}$ on $\Omega_{n+1}$. Define
$w_{n+1}\doteq s_{\bar j}$, and $C_{n+1}$ in order for $(b)$ to
hold, and the construction is completed.
\end{proof}

\begin{rem}\label{rem_B}
\rm{ As anticipated in Section \ref{sec_deph}, the results of our main theorem are the same if we substitute condition \eqref{B1} with condition \eqref{B1+}:
\begin{gather*}
 |B(x,t)| \le b(t)  \quad \text{instead of} \quad |B(x,t)| \le b_1+b_2 \abs{t}^{p-1}\quad \text{for } t\in \R
\end{gather*}
Although it is not even possible to define the operator $\oB$ if we take $W^{1,p}(\Omega)$ as its domain, this difficulty is easily overcome if we restrict the domain to (essentially) bounded functions, i.e. if we define 
$$
\oB:\wup\cap L^\infty(\Omega)\to \wup^\star.
$$
Now consider that each function used in the proof of the main theorem is either bounded or essentially bounded, so it is quite immediate to see all the existence and comparison theorems proved in Section \ref{sec_tech}, along with all the reasoning and tools used in the proof, still work. Consider for example an obstacle problem $\K_{\theta,\psi}$ such that $\abs \theta \leq C \geq \abs \psi$, and define the operator $\tilde \oB$ relative to the function:
\begin{gather*}
 \tilde B(x,t)= \begin{cases}
                 B(x,t) & \text{ for } \abs t \leq C+1\\
		 b_1(x,C+1) & \text{ for } t\geq C+1\\
                 b_1(x,-(C+1)) & \text{ for } t\leq -(C+1)
                \end{cases}
\end{gather*}
$\tilde \oB$ satisfies evidently condition \eqref{B1}, so it admits a solution to the obstacle problem, which by comparison Theorem \ref{compaosta} is bounded in modulus by $C$, and now it is evident that this function solves also the obstacle problem relative to the original bad-behaved $\oB$.
}

\end{rem}

\section{On the links with the weak maximum
principle and parabolicity: proof of Theorem \ref{LKW}}\label{sec_WMP}
As already explained in the introduction, throughout this section we will restrict ourselves to potentials
$B(x,t)$ of the form $B(x,t)=b(x)f(t)$, where
\begin{equation}
\begin{array}{l}
\disp b, b^{-1} \in L^\infty_\loc(M), \quad b > 0 \text{ a.e. on }
M; \\[0.2cm]
f \in C^0(\R), \quad f(0)=0, \quad f \text{ is non-decreasing on }
\R,
\end{array}
\end{equation}
while we require \eqref{A1}, \eqref{A2} on $A$. 

\begin{oss}
\emph{As in Remark \ref{ossfilapla}, in the case of the operator
$L_\varphi$ in Example \ref{ex2} with $h$ being the metric tensor,
\eqref{A1} and \eqref{A2} can be weakened to \eqref{ipophi} and
\eqref{buonefi}.}
\end{oss}

We begin with the following lemma characterizing $(W)$, whose
proof follows the lines of \cite{PRS3}.

\begin{lemma}\label{lemwmp}
Property $(W)$ for $b^{-1}L_{\oA}$ is equivalent to the following
property, which we call $(P)$:
\begin{quote}
For every $g\in C^0(\R)$, and for every $u\in C^0(M) \cap
W^{1,p}_\loc(M)$ bounded above and satisfying $L_{\oA}u \ge
b(x)g(u)$ on M, it holds $g(u^\star)\le 0$.
\end{quote}
\end{lemma}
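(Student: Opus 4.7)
The plan is to prove the two implications separately, following the lines of the classical arguments in \cite{PRS3}. For $(W) \Rightarrow (P)$, I argue by contradiction: assume $u \in C^0(M) \cap W^{1,p}_\loc(M)$ is bounded above, $L_{\oA}u \ge b(x) g(u)$ weakly, and $g(u^\star) > 0$. By continuity of $g$, choose $\mu>0$ and $\delta>0$ with $g(t) \ge \mu$ for every $t \in (u^\star - \delta, u^\star]$, and set $\eta = u^\star - \delta/2 < u^\star$. On $\Omega_\eta$ we have $g(u) \ge \mu$, so for any non-trivial $0 \le \phi \in C^\infty_c(\Omega_\eta)$ the weak inequality yields
$$
-\langle \oA(u), \phi \rangle \ \ge \ \int b g(u) \phi \ \ge \ \mu \int b \phi \ > \ 0,
$$
because $b>0$ a.e. Taking $\eps = \mu/2$ in the weak interpretation of $\inf_{\Omega_\eta} b^{-1} L_{\oA}u \le 0$ given in the Remark preceding the Lemma, this uniform lower bound is contradicted.

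For the converse $(P) \Rightarrow (W)$, suppose $(W)$ fails. Then there exist $u \in C^0(M) \cap W^{1,p}_\loc(M)$ with $u^\star < \infty$, some $\eta < u^\star$, and $\eps_0 > 0$ such that $L_{\oA} u \ge \eps_0 b$ weakly on $\Omega_\eta$. The plan is to build a pair $(v,g)$ with $v \in C^0(M) \cap W^{1,p}_\loc(M)$ bounded above, $g \in C^0(\R)$, $g(v^\star)>0$, and $L_{\oA} v \ge b g(v)$ globally on $M$, thereby contradicting $(P)$. Observe first that replacing $\eta$ by any larger $\eta' \in [\eta, u^\star)$ preserves the failure (since $\Omega_{\eta'} \subset \Omega_\eta$), and that $L_{\oA}$ is invariant under additive shifts of $u$; thus I may assume $\eta > 0$. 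Define $g \in C^0(\R)$ piecewise: $g \equiv 0$ on $(-\infty, \eta]$, $g$ linear from $0$ to $\eps_0$ on $[\eta, u^\star]$, and $g \equiv \eps_0$ on $[u^\star, \infty)$. Then $g$ is non-decreasing with $0 \le g \le \eps_0$, and since $\eta>0$ one has $t\,g(t) \ge 0$ for every $t \in \R$, so that $B(x,t) \doteq b(x)g(t)$ meets the structural requirements \eqref{B1+}, \eqref{B2}, \eqref{B3}.

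Set $v = \max\{u, \eta\}$, which lies in $C^0(M) \cap W^{1,p}_\loc(M)$ and satisfies $v^\star = u^\star$. The constant $\eta$ is trivially a subsolution on $M$ of $L_{\oA}\cdot - b g(\cdot) \ge 0$ (both sides vanish), while on $\Omega_\eta$ one has $g(u) \le \eps_0$, so $L_{\oA} u \ge \eps_0 b \ge b g(u)$ and $u$ is a subsolution of the same equation on $\Omega_\eta$. Because $u$ is continuous with $u = \eta$ on $\partial \Omega_\eta$, Remark \ref{bordopercontinue} gives $(u-\eta)|_{\Omega_\eta} = \max\{u-\eta,0\}|_{\Omega_\eta} \in X^p_0(\Omega_\eta)$; the subsolution variant of Lemma \ref{pasting}, obtained via the symmetry trick used in Proposition \ref{minsupmaxsub}, then shows that $v$ is a global subsolution of $L_{\oA} v \ge b g(v)$ on $M$. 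Finally, $(P)$ forces $g(v^\star) = g(u^\star) = \eps_0 \le 0$, a contradiction. The main technical point I expect is exactly the application of the pasting lemma: the preliminary shift guaranteeing $\eta > 0$, together with the tailored profile of $g$, is precisely what squeezes the auxiliary potential $b(x)g(t)$ into the framework $\Se$ so that the pasting machinery developed in Section \ref{sec_tech} applies cleanly; the rest of the argument is routine bookkeeping with the weak definitions.
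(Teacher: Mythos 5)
Your proof is correct and follows essentially the same route as the paper: the forward implication is the paper's direct limiting argument recast as a contradiction, and the converse uses the same truncation $\max\{u,\eta\}$ (the paper writes it as $\max\{u-\eta,0\}$) together with a continuous non-decreasing $g$ vanishing below the cut level and equal to $\eps_0$ near $u^\star$, pasted via Lemma \ref{pasting}. Your version merely adds the harmless normalization $\eta>0$ and spells out the verification of \eqref{B2}, \eqref{B3} and the $X^p_0$ boundary condition, which the paper leaves implicit.
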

\begin{proof}
$(W)\Rightarrow (P)$. From $(W)$ and $L_{\oA}u \ge b(x)g(u)$, for
every $\eta<u^\star$ and $\eps>0$ we can find $0\le \phi \in
C^\infty_c(\Omega_\eta)$ such that
$$
\eps \int b\phi > - <\oA(u), \phi> \ \ge \int g(u)b\phi \ge
\inf_{\Omega_\eta}g(u)\int b\phi
$$
Since $b>0$ a.e. on $M$, we can simplify the integral term to
obtain $\inf_{\Omega_\eta} g(u) \le \eps$. Letting
$\eps\rightarrow 0$ and then $\eta \rightarrow u^\star$, and using
the continuity of $u,g$ we get $g(u^\star)\le 0$, as required. To
prove that $(P)\Rightarrow (W)$, suppose by contradiction that
there exists a bounded above function $u\in C^0\cap W^{1,p}_\loc$,
a value $\eta< u^\star$ and $\eps>0$ such that
$\inf_{\Omega_\eta}b^{-1}L_{\oA}u \ge \eps$. Let $g_\eps(t)$ be a
continuous function on $\R$ such that $g_\eps(t) = \eps$ if $t\ge
u^\star-\eta$, and $g_\eps(t)=0$ for $t\le 0$. Then, by the pasting Lemma \ref{pasting},
$w=\max\{u-\eta,0\}$ satisfies $L_{\oA}w \ge b(x)g_\eps(w)$.
Furthermore, $g_\eps(w^\star)= g_\eps(u^\star-\eta)= \eps$,
contradicting $(P)$.
\end{proof}
Theorem \ref{LKW} is an immediate corollary of the main Theorem \ref{mainteo} and of the following two propositions.
\begin{prop}\label{primaprop}
If $b^{-1}L_{\oA}$ satisfies $(W)$, then $(L)$ holds for every
operator $L_\oF$ of type 1. Conversely, if $(L)$ holds for some
operator $\oF$ of type 1, then $b^{-1}L_{\oA}$ satisfies $(W)$.
\end{prop}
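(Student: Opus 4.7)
My plan is to pass through the equivalent formulation of $(W)$ given by property $(P)$ of Lemma \ref{lemwmp}, and, whenever convenient, to invoke the equivalence between $(L)$ for $L^\infty$ and $(L)$ for $\hol_\loc$ established in Theorem \ref{mainteo}.

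For the forward implication $(W)\Rightarrow(L)$, the monotonicity of $f$ is enough. By Theorem \ref{mainteo} it suffices to verify $(L)$ for $\hol_\loc$ functions, so let $u\in\hol_\loc(M)\cap W^{1,p}_\loc(M)$ be a bounded, non-negative solution of $L_{\oF}u\ge 0$; with $B(x,t)=b(x)f(t)$ this reads $L_{\oA}u\ge b(x)f(u)$ weakly on $M$. Since both $u$ and $f$ are continuous, property $(P)$ applied with $g=f$ yields $f(u^\star)\le 0$; but $u\ge 0$ forces $u^\star\ge 0$, and the type-$1$ hypotheses $f(0)=0$, $f>0$ on $\R^+$ then give $u^\star=0$, so $u\equiv 0$, and in particular $u$ is constant.

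For the converse I assume $(L)$ for some operator $\oF$ of type $1$ with potential $B(x,t)=b(x)f(t)$ and argue by contradiction through Lemma \ref{lemwmp}: suppose $(P)$ fails, so that there exist a continuous $g$ and a bounded-above $u\in C^0(M)\cap W^{1,p}_\loc(M)$ with $L_{\oA}u\ge b(x)g(u)$ and $g(u^\star)>0$. Set $\delta_0\doteq g(u^\star)/2>0$; by continuity of $g$ there exists $\eta_1<u^\star$ with $g\ge\delta_0$ on $[\eta_1,u^\star]$, and since $f$ is continuous at $0$ with $f(0)=0$, we may replace $\eta_1$ by some $\eta\in[\eta_1,u^\star)$ close enough to $u^\star$ that, in addition, $f(u^\star-\eta)\le\delta_0$. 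Consider $w\doteq(u-\eta)_+\in C^0(M)\cap W^{1,p}_\loc(M)$: it is bounded, non-negative, with $w^\star=u^\star-\eta>0$. On the open set $\Omega\doteq\{u>\eta\}$ one has $\nabla w=\nabla u$ and, by the monotonicity of $f$ together with the choice of $\eta$,
$$
L_{\oA}w=L_{\oA}u\ge b\,g(u)\ge b\,\delta_0\ge b\,f(u-\eta)=b\,f(w),
$$
so $L_{\oF}w\ge 0$ weakly on $\Omega$. Since the zero function is a subsolution of $L_{\oF}$ on $M$ (because $f(0)=0$) and $u-\eta=0$ along $\partial\Omega$ by continuity, the subsolution version of the pasting Lemma \ref{pasting} shows that $w$ is a subsolution of $L_{\oF}$ on the whole of $M$. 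Finally, $w$ cannot be constant: otherwise $w\equiv u^\star-\eta>0$ would force $u\equiv u^\star$, hence $0=L_{\oA}u\ge b(x)g(u^\star)>0$ a.e., a contradiction. Thus $w$ is a non-constant bounded non-negative element of $W^{1,p}_\loc(M)$ with $L_{\oF}w\ge 0$, contradicting $(L)$ for the chosen $\oF$.

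The main obstacle lies in the converse direction: because $f$ is prescribed by the hypothesis and neither $A$ nor $f$ is assumed homogeneous, a simple rescaling of $u$ cannot be used to control $f(w)$. The remedy is the joint choice of the truncation level $\eta$, which simultaneously preserves $g\ge\delta_0$ on $[\eta,u^\star]$ and ensures $f(u^\star-\eta)\le\delta_0$, so that the constant lower bound $b\delta_0$ on $L_{\oA}w$ absorbs the potential term $bf(w)$; the pasting lemma then glues the local subsolution on $\Omega$ with the zero subsolution on $M\setminus\Omega$.
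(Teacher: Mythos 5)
Your proof is correct and follows essentially the same route as the paper's: the forward direction is Lemma \ref{lemwmp} applied with $g=f$ plus the type-1 hypothesis, and the converse chooses $\eta$ close enough to $u^\star$ that $f(u^\star-\eta)$ is absorbed by the positive lower bound on $b^{-1}L_{\oA}u$, then pastes $(u-\eta)_+$ with the zero subsolution to contradict $(L)$. The only cosmetic difference is that you negate property $(P)$ rather than $(W)$ directly (hence the extra continuity step producing $\delta_0$), which Lemma \ref{lemwmp} makes equivalent.
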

\begin{proof}
Suppose that $(W)$ is met, and let $u \in \hol_\loc \cap
W^{1,p}_\loc$ be a bounded, non-negative solution of $L_{\oF}u\ge
0$. By Lemma \ref{lemwmp}, $f(u^\star)\le 0$. Since $\oF$ is of
type 1, $u^\star \le 0$, that is, $u=0$, as desired. Conversely,
let $\oF$ be an operator of type 1 for which the Liouville
property holds. Suppose by contradiction that $(W)$ is not
satisfied, so that there exists $u \in C^0\cap W^{1,p}_\loc$ such
that $b^{-1}L_{\oA}u \ge \eps$ on some $\Omega_{\eta_0}$. Clearly,
$u$ is non-constant. Since $f(0)=0$, we can choose $\eta \in
(\eta_0,u^\star)$ in such a way that $f(u^\star-\eta)<\eps$.
Hence, by the monotonicity of $f$, the function $u-\eta$ solves
$$
L_{\oA}(u-\eta) \ge b(x)\eps \ge b(x)f(u-\eta) \qquad \text{on } \
\Omega_\eta.
$$
Thanks to the pasting Lemma \ref{pasting}, $w= \max\{u-\eta,0\}$ is a
non-constant, non-negative bounded solution of $L_{\oA}w \ge
b(x)f(w)$, that is, $L_{\oF}w \ge 0$, contradicting the Liouville
property.
\end{proof}
\begin{prop}\label{secondaprop}
If $b^{-1}L_{\oA}$ is parabolic, then $(L)$ holds for every
operator $L_\oF$ of type 2. Conversely, if $(L)$ holds for some
operator $\oF$ of type 2, then $b^{-1}L_{\oA}$ satisfies $(W_\pa)$.
\end{prop}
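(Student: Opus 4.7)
The plan is to adapt the strategy of Proposition \ref{primaprop}. The key distinction from the type 1 case is that for type 2 operators $(L)$ only forces a bounded non-negative subsolution to be \emph{constant} (positive constants in the flat interval $[0,T]$ of $f$ are also solutions), so the correct analogue of the weak maximum principle $(W)$ must be strengthened to parabolicity $(W_\pa)$.

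For the forward direction, I will take $u$ to be a bounded, non-negative $\hol_\loc \cap W^{1,p}_\loc$ subsolution of $L_\oF$. Since $u \ge 0$ and $f$ is non-decreasing with $f(0)=0$, one has $f(u) \ge 0$, so that $L_{\oA}u \ge b(x)f(u) \ge 0$ weakly. If $u$ were non-constant, parabolicity of $b^{-1}L_{\oA}$ would yield some $\eta < u^\star$ with $\inf_{\Omega_\eta} b^{-1}L_{\oA}u < 0$, directly contradicting the weak inequality $L_{\oA}u \ge 0$. Hence $u$ must be constant, which is $(L)$ for $\hol_\loc$; the $L^\infty$ version then follows from Theorem \ref{mainteo}.

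For the converse, I assume $(L)$ for some operator $\oF$ of type 2, with $f \equiv 0$ on $[0,T]$, $T>0$, and argue by contradiction. Suppose $(W_\pa)$ fails: there exist a non-constant $u \in C^0(M) \cap W^{1,p}_\loc(M)$ bounded above and some $\eta_0 < u^\star$ with $\inf_{\Omega_{\eta_0}} b^{-1}L_{\oA}u \ge 0$ weakly. Unwinding the weak definition (letting the test $\eps$ tend to $0$) this is equivalent to $L_{\oA}u \ge 0$ weakly on $\Omega_{\eta_0}$. The key move is to choose $\eta \in [\max(\eta_0, u^\star - T), u^\star)$, so that $v \doteq (u-\eta)_+$ satisfies $0 \le v \le T$ on all of $M$, whence $f(v) \equiv 0$ and $L_\oF(u-\eta) = L_{\oA}u \ge 0$ weakly on $\Omega_\eta \subseteq \Omega_{\eta_0}$. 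I then apply the pasting Lemma \ref{pasting} in its subsolution version with $w_1 \equiv 0$ (a subsolution on $M$, since $L_\oF 0 = 0$) and $w_2 = u-\eta$ (a subsolution of $L_\oF$ on $\Omega_\eta$ by the above), noting that both are continuous and agree on $\partial \Omega_\eta$ (where $u = \eta$ by continuity). This yields that $v = \max\{w_1,w_2\}$ is a subsolution of $L_\oF$ on $M$. Since $v$ is bounded, non-negative, continuous, and non-constant (inheriting non-constancy from $u$: either $\Omega_\eta \ne M$ and $v$ vanishes somewhere while being positive on $\Omega_\eta$, or $\Omega_\eta = M$ and $v = u - \eta$ is non-constant), property $(L)$ forces $v$ to be constant, the desired contradiction.

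The main obstacle is the converse: one must simultaneously keep $\eta \ge \eta_0$ (to preserve the inequality $L_\oA u \ge 0$ on $\Omega_\eta$) while pushing $\eta$ close enough to $u^\star$ that the truncation stays within the flat interval of $f$, and then verify that the continuous matching on $\partial \Omega_\eta$ legitimately activates the pasting lemma to produce a global $L_\oF$-subsolution. The rest is a bookkeeping of what parabolicity and $(L)$ for type 2 operators say, mirroring the type 1 argument.
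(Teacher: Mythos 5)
Your proof is correct and follows essentially the same route as the paper: the forward direction observes that a non-negative $L_\oF$-subsolution is automatically an $L_\oA$-subsolution and invokes parabolicity, and the converse truncates $u$ at a level $\eta$ close enough to $u^\star$ that $(u-\eta)_+$ lands in the flat interval $[0,T]$ of $f$ and then pastes with $0$ to contradict $(L)$. If anything, you are slightly more careful than the paper, which tacitly asserts $L_\oA u \ge 0$ on all of $M$ rather than only on $\Omega_{\eta_0}$; your choice $\eta \ge \max(\eta_0, u^\star - T)$ handles this cleanly.
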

\begin{proof}
Suppose that $(W_\pa)$ is met. Since each bounded, non-negative $u \in \hol_\loc \cap
W^{1,p}_\loc$ solving $L_{\oF}u\ge
0$ automatically solves $L_{\oA}u\ge 0$, then $u$ is constant by $(W_{\pa})$, which proves $(L)$. Conversely, let $\oF$ be an operator of
type 2 for which the Liouville property holds, and let $[0,T]$ be
the maximal interval in $\R^+_0$ where $f=0$. Suppose by
contradiction that $(W_\pa)$ is not satisfied, so that there
exists a nonconstant $u \in C^0\cap W^{1,p}_\loc$ with
$b^{-1}L_{\oA}u \ge 0$ on $M$. For $\eta$ close enough to
$u^\star$, $u-\eta\le T$ on $M$, hence $w=\max\{u-\eta,0\}$ is a
non-negative, bounded non-constant solution of $L_{\oA}w \ge
0=b(x)f(w)$ on $M$, contradicting the Liouville property for
$\oF$.
\end{proof}
%
%
%

\section{The Evans property}\label{sec_Evans}
We conclude this paper with some comments on the existence of
Evans potentials on model manifolds. It turns out that the function-theoretic properties of these potentials can be used to study the underlying manifold. By a way of example, we quote the papers \cite{VV} and \cite{TS}. In the first one, the authors extend the Kelvin-Nevanlinna-Royden condition and find a Stokes' type theorem for vector fields with integrability condition related to the Evans potential, while in the second article Evans potentials are exploited in order to understand the spaces of harmonic functions with polynomial growth. As a matter of fact, these spaces give a lot of information on the structure at infinity of the manifold. We recall that, only for the standard Laplace-Beltrami operator, it is known that any parabolic Riemannian manifold admits an Evans potential, as proved in \cite{Nakai} or in \cite{SN}, but the technique involved in this proof heavily relies on the linearity of the operator and cannot be easily generalized, even for the $p$-Laplacian. In this respect, see \cite{kilpelainen}.

From the technical point of view, we remark that, for the main
Theorems \ref{mainteo} and \ref{LKW} to hold, no growth control on
$B(x,t)$ in the variable $t$ is required. As we will see, for the
Evans property to hold for $L_{\oF}$ we shall necessarily assume a
precise maximal growth of $B$, otherwise there is no hope to find
any Evans potential. This growth is
described by the so-called Keller-Osserman condition. \par
To begin with, we recall that a model manifold $M_g$ is $\R^m$
endowed with a metric $\di s^2$ which, in polar coordinates
centered at some origin $o$, has the expression $\di s^2 = \di r^2
+ g(r)^2\di \theta^2$, where $\di \theta^2$ is the standard metric
on the unit sphere $\mathbb{S}^{m-1}$ and $g(r)$ satisfies the
following assumptions:
$$
g \in C^\infty(\R^+_0), \quad  g> 0 \text{ on } \R^+, \quad
g'(0)=1, \quad g^{(2k)}(0)=0
$$
for every $k=0,1,2,\ldots$, where $g^{(2k)}$ means the
$(2k)$-derivative of $g$. The last condition ensures that the
metric is smooth at the origin $o$. Note that
$$
\Delta r(x) = (m-1)\frac{g'(r(x))}{g(r(x))}, \quad \vol(\partial
B_r)=g(r)^{m-1}, \quad \vol(B_r) = \int_0^r g(t)^{m-1}\di t.
$$
Consider the operator $L_{\varphi}$ of Example \ref{ex2} with $h$
being the metric tensor. If $u(x)=z(r(x))$ is a radial function, a
straightforward computation gives
\begin{equation}\label{filaradiale}
L_\varphi u =
g^{1-m}\big[g^{m-1}\varphi(|z'|)\mathrm{sgn}(z')\big]'.
\end{equation}
Note that \eqref{doublebound} implies $\varphi(t)\ra +\infty$
as $t\ra +\infty$. Let $B(x,t)=B(t)$ be such that
$$
B \in C^0(\R_0^+), \ B\ge 0 \text{ on } \R^+, \  B(0)=0, \ B
\text{ is non-decreasing on } \R,
$$
and set $B=0$ on $\R^-$. For $c>0$, define the functions
\begin{equation}\label{vez}
\begin{array}{ll}
V_{\pa}(r) = \varphi^{-1}\Big(cg(r)^{1-m}\Big), & \qquad
V_{\stc}(r) = \varphi^{-1}\Big(cg(r)^{1-m}\int_R^rg(t)^{m-1}\di
t\Big) \\[0.3cm]
z_{\pa}(r) = \int_R^r V_\pa(t)\di t, & \qquad z_{\stc}(r) =
\int_R^r V_\stc(t)\di t.
\end{array}
\end{equation}
Note that both $z_\pa$ and $z_\stc$ are increasing on
$[R,+\infty)$. By \eqref{filaradiale}, the functions $u_\pa =
z_\pa \circ r$, $u_\stc=z_\stc \circ r$ are solutions of
$$
L_\varphi u_\pa =0, \qquad L_\varphi u_\stc = c.
$$
Therefore, the following property can be easily verified:
\begin{prop}\label{LKWmodelli}
For the operator $L_{\oF}$ defined by $L_{\oF}u =L_\varphi u -
B(u)$, properties $(K)$ and $(L)$ are equivalent to either
\begin{equation}\label{modstocast}
V_\stc \not \in L^1(+\infty) \quad \text{for every } c>0 \text{
small enough, if } B>0 \text{ on } \R^+,
\end{equation}
or
\begin{equation}\label{modparab}
V_\pa \not \in L^1(+\infty) \quad \text{for every } c>0 \text{
small enough, otherwise.}
\end{equation}
\end{prop}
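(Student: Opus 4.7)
The plan is to use Theorem~\ref{mainteo} to reduce the assertion to the equivalence of the stated integrability conditions with $(K)$ (or $(L)$), and then to exploit the radial symmetry of $M_g$ so that every relevant object is a function of $r$ alone. By formula \eqref{filaradiale}, a radial $u=z\circ r$ is a supersolution (resp.\ subsolution, solution) of $L_{\oF}$ precisely when $[g^{m-1}\varphi(z')]'\le g^{m-1}B(z)$ (resp.\ $\ge$, $=$) on the relevant interval. From the definitions in \eqref{vez} one reads $L_\varphi u_\pa=0$ and $L_\varphi u_\stc=c$, so $L_{\oF}u_\pa=-B(u_\pa)\le 0$ unconditionally, while $L_{\oF}u_\stc=c-B(u_\stc)\le 0$ exactly where $B(u_\stc)\ge c$.

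For the implication ``integrability $\Rightarrow (K)$'' I would separate the two cases. If $B\equiv 0$ on $[0,T]$ and $V_\pa\notin L^1(+\infty)$, the single function $w(x)=z_\pa(r(x))$, extended by $0$ on $K=\overline{B_R}$, is a continuous exhaustion satisfying $L_{\oF}w=-B(w)\le 0$ on $M_g\setminus K$, hence already a Khas'minskii potential. When $B>0$ on $\R^+$ and $V_\stc\notin L^1(+\infty)$ for $c$ small, I would fix $\delta_0>0$, pick $c\in(0,B(\delta_0))$ small enough that $V_\stc\notin L^1(+\infty)$ still holds, and construct a radial potential for $K=\overline{B_{R_0}}$ by concatenation: $w\equiv 0$ on $[0,R_0]$; on a finite annulus $R_0\le r\le R_1$, a primitive of $V_\pa$ calibrated so that $w(R_1)=\delta_0$ (so that $L_\varphi w=0$ there); and $w(r)=\delta_0+\int_{R_1}^{r}\tilde V_\stc(t)\,\di t$ for $r\ge R_1$, where $\tilde V_\stc$ is the $V_\stc$-function with lower integration limit $R_1$ (which shares the same integrability behaviour at infinity as $V_\stc$, hence is still non-integrable). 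Each smooth piece is a radial supersolution, and at the junction $r=R_1$ the left derivative $V_\pa(R_1)>0$ strictly dominates the right derivative $\tilde V_\stc(R_1)=0$, producing a downward kink whose distributional contribution is a non-positive Dirac mass; the weak supersolution inequality on $M_g\setminus K$ is then verified directly with a radial test function, the identity $B(w)\ge B(\delta_0)\ge c$ on $\{r>R_1\}$ absorbing the $c$-contribution coming from $L_\varphi w$.

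For the converses I argue contrapositively, producing bounded counterexamples to $(L)$. In the case $B>0$ on $\R^+$, assuming $V_\stc\in L^1(+\infty)$ for every small $c$, I solve the radial Cauchy problem $[g^{m-1}\varphi(z')]'=g^{m-1}B(z)$ with $z(0)=\alpha>0$, $z'(0)=0$ by Peano's theorem, obtaining a non-decreasing radial solution of $L_{\oF}u=0$. Monotonicity of $B$ yields $z'(r)\le V_\stc(r)$ with $c=B(M)$ on any interval where $z\le M$, so a bootstrap with $M=2\alpha$, combined with the fact that $z_\stc(\infty)\to 0$ as $c\to 0^+$ (by dominated convergence, bounding $V_\stc$ for $c\le c_0$ by $V_\stc$ at $c_0$), forces $z\le 2\alpha$ globally for $\alpha$ small. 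This gives a globally bounded, non-constant, non-negative solution of $L_{\oF}u=0$, contradicting $(L)$. In the case $B\equiv 0$ on $[0,T]$, assuming $V_\pa\in L^1(+\infty)$ for $c$ small, I choose $c$ so small that $z_\pa(\infty)\le T/2$ and set $u(r)=T/2+z_\pa(r)$ for $r\ge R$, extended by the constant $T/2$ on $\overline{B_R}$. Since $u\le T$ forces $B(u)\equiv 0$, one has $L_{\oF}u=L_\varphi u=0$ on $\{r>R\}$, and across $\{r=R\}$ the upward kink contributes a non-negative Dirac mass, so $u$ is a non-constant, bounded, non-negative subsolution of $L_{\oF}$, again contradicting $(L)$.

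The main delicate point is the gluing at $r=R_1$ in the Case~1 construction for $(K)$: Lemma~\ref{pasting} as stated handles only minima of supersolutions, whereas the construction selects the larger of the two branches on $\{r>R_1\}$, so a direct distributional verification of the supersolution inequality is needed. All remaining ingredients are routine: Peano existence for the radial ODE, a bootstrap boundedness argument, dominated convergence applied to the families $\{V_{\stc}\}_c$ and $\{V_\pa\}_c$, and the observation that the vanishing of $z_\pa$ (respectively of the concatenated $z$) at $r=R$ (respectively $r=R_0$) makes the $0$-extension across $\partial K$ automatically continuous.
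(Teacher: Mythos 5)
Your proposal is mathematically sound in outline and proves the right equivalences, but it takes a genuinely different and noticeably heavier route than the paper. The paper never constructs true Khas'minskii potentials vanishing on $K$: in the case $B>0$ it simply takes $w_{\eps,\rho}=\delta+u_\stc$ (which equals $\delta$, not $0$, on $\partial B_R$), notes that $L_\varphi w_{\eps,\rho}=c\le B(\delta)\le B(w_{\eps,\rho})$, and observes that such \emph{modified} potentials already suffice to run the comparison argument of $(3)\Rightarrow(1)$ of Theorem \ref{mainteo} and obtain $(L)$ directly; $(K)$ then follows from Theorem \ref{mainteo}. Your two-piece concatenation (a $V_\pa$-branch on $[R_0,R_1]$ glued to a shifted $V_\stc$-branch) is correct --- the sign of the distributional contribution at $r=R_1$ is indeed the right one, since $\varphi(w'(R_1^-))>\varphi(w'(R_1^+))=\varphi(0)$ --- but it is extra work, and it only yields potentials for $K$ a closed ball centred at $o$, so you must in any case pass through $(L)$ and Theorem \ref{mainteo} to obtain the full property $(K)$. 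For the converse in the case $B>0$, the paper again avoids ODE theory entirely: it uses the explicit bounded function $u_\stc$, which satisfies $L_\varphi u_\stc\ge c$, chooses $\eta$ with $B(u^\star-\eta)\le c$, and truncates as in the second part of the proof of Proposition \ref{primaprop} to get a bounded, non-negative, non-constant subsolution of $L_\oF$.

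Two soft spots in your converse for the case $B>0$. First, Peano's theorem does not directly apply to the Cauchy problem at $r=0$: the equation $[g^{m-1}\varphi(z')]'=g^{m-1}B(z)$ is singular at the pole (since $g(0)=0$), so existence there must go through a Volterra fixed-point formulation, as the paper does in Section \ref{sec_Evans} but starting from $R>0$. Second, if you repair this by starting at some $R>0$ with $z(R)=\alpha$, $z'(R)=0$, you cannot simply extend by the constant $\alpha$ on $B_R$ to get a global subsolution: the constant $\alpha$ satisfies $L_\oF\alpha=-B(\alpha)<0$ on $B_R$, i.e.\ it is a strict supersolution there, so the glued function is not a subsolution of $L_\oF$ on all of $M$ (contrast with your type-2 case, where $B(T/2)=0$ and the gluing is harmless). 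The fix is exactly the paper's truncation trick: replace $u=z\circ r$ by $\max\{u-\alpha',0\}$ for some $\alpha'$ with $\alpha<\alpha'<\sup z$; by monotonicity of $B$ this is a subsolution where positive and vanishes near $\partial B_R$, hence extends by zero via Lemma \ref{pasting}. With these repairs your argument goes through.
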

\begin{proof}
We sketch the proof when $B>0$ on $\R^+$, the other case being
analogous. If $V_\stc \in L^1(+\infty)$, then $u_\stc$ is a
bounded, non-negative solution of $L_\varphi u \ge c$ on
$M\backslash B_R$. Choose $\eta\in (0,u^\star)$ in such a way that
$B(u^\star-\eta) \le c$, and proceed as in the second part of the
proof of Proposition \ref{primaprop} to contradict the Liouville
property of $L_{\oF}$. Conversely, if $V_\stc \not \in
L^1(+\infty)$, then $u_\stc$ is an exhaustion. For every
$\delta>0$, choose $c>0$ small enough that $c\le B(\delta)$. Since
$\varphi(0)=0$, for every $\rho>R$ and $\eps>0$ we can reduce $c$
in such a way that $w_{\eps,\rho}= \delta + u_\stc$ satisfies
$$
w_{\eps,\rho} = \delta \ \text{ on } \partial B_R, \quad
w_{\eps,\rho}\le \delta + \eps \ \text{ on } B_\rho\backslash B_R,
\quad L_\varphi w_{\eps,\rho} = c \le B(\delta) \le
B(w_{\eps,\rho}).
$$
As the reader can check by slightly modifying the argument in the
proof of $(3)\Rightarrow (1)$ of Theorem \ref{mainteo}, the
existence of these modified \Ka potentials for every choice of
$\delta, \eps, \rho$ is enough to conclude the validity of $(L)$,
hence of $(K)$.
\end{proof}
\begin{oss}
\emph{In the case $\varphi(t)=t^{p-1}$ of the $p$-Laplacian,
making the conditions on $V_\stc$ and $V_\pa$ more explicit and
using Theorem \ref{LKW} we deduce that, on model manifolds,
$\Delta_p$ satisfies $(W)$ if and only if
$$
\left(\frac{\vol(B_r)}{\vol(\partial B_r)}\right)^{\frac{1}{p-1}}
\not\in L^1(+\infty),
$$
and $\Delta_p$ is parabolic if and only if
$$
\left(\frac{1}{\vol(\partial B_r)}\right)^{\frac{1}{p-1}} \not\in
L^1(+\infty).
$$
This has been observed, for instance, in \cite{PRS2}, see also the end of \cite{PRS1} and the references therein for a thorough discussion on $\Delta_p$ on model manifolds.}
\end{oss}
We now study the existence of an Evans potential on $M_g$. First, we need to produce radial solutions of $L_\varphi u =B(u)$
which are zero on some fixed sphere $\partial B_R$. To do so, the first step is to solve locally the related Cauchy problem. The
next result is a modification of Proposition A.1 of
\cite{FPR}
%
%
%
%
%
\begin{lemma}
In our assumptions, for every fixed $R > 0$ and $c\in (0,1]$ the
problem
\begin{equation}\label{cauchyloc}
\left\{ \begin{array}{l}
\big[g^{m-1}\varphi(c|z'|)\mathrm{sgn}(z')\big]' =
g^{m-1}B(cz) \qquad \text{on } [R,+\infty) \\[0.2cm]
z(R)=\vartheta\ge 0, \quad z'(R)=\mu> 0
\end{array}\right.
\end{equation}
has a positive, increasing $C^1$ solution $z_c$ defined on a
maximal interval $[R,\rho)$, where $\rho$ may depend on $c$.
Moreover, if $\rho<+\infty$, then $z_c(\rho^-)=+\infty$.
\end{lemma}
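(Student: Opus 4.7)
The plan is to convert the Cauchy problem into an integral equation, solve it locally by a fixed point argument, then extend maximally and establish the blow-up alternative by a continuation argument.

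First, I would reformulate the ODE. As long as $z'>0$ on $[R,r]$, integrating $[g^{m-1}\varphi(cz')]'=g^{m-1}B(cz)$ gives
$$g(r)^{m-1}\varphi(cz'(r))=g(R)^{m-1}\varphi(c\mu)+\int_R^r g(s)^{m-1}B(cz(s))\,ds.$$
By the discussion of Example \ref{ex2} (with $h$ the metric tensor) together with \eqref{doublebound}, $\varphi$ is strictly increasing on $\R^+_0$, so $\varphi^{-1}$ is well-defined, continuous and strictly increasing. Since $B\geq 0$ and $\varphi(c\mu)>0$, the argument of $\varphi^{-1}$ stays strictly positive and we obtain the integral equation
$$z(r)=\vartheta+\frac{1}{c}\int_R^r \varphi^{-1}\!\left(\frac{g(R)^{m-1}\varphi(c\mu)+\int_R^\tau g(s)^{m-1}B(cz(s))\,ds}{g(\tau)^{m-1}}\right)d\tau. \qquad (\star)$$
Any continuous solution of $(\star)$ is automatically $C^1$ with $z'>0$, hence recovers a classical solution of \eqref{cauchyloc}.

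Next, I would establish local existence on some $[R,R+\delta]$ by a Schauder fixed point argument applied to the operator $T$ defined by sending $z$ to the right-hand side of $(\star)$, on the closed ball $\{z\in C^0([R,R+\delta]): \|z-\vartheta\|_\infty\leq 1\}$. Because $g$ is continuous and bounded away from $0$ on $[R,R+\delta]$, $B(c\cdot)$ is bounded on $[c(\vartheta-1),c(\vartheta+1)]$, and $\varphi^{-1}$ is continuous, the family $T(\{z:\|z-\vartheta\|_\infty\le 1\})$ is uniformly bounded and equicontinuous, hence relatively compact by Ascoli--Arzel\`a; for $\delta$ small, $T$ maps the ball to itself and is continuous, so a fixed point exists.

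Then I would extend $z_c$ to a maximal interval $[R,\rho)$ by iterating the local construction starting from each endpoint. Suppose by contradiction that $\rho<+\infty$ and $z_c$ stays bounded, say $z_c\leq M$ on $[R,\rho)$. Then $0\leq B(cz_c(s))\leq B(cM)$, so the numerator in $(\star)$ is bounded on $[R,\rho)$; since $g>0$ is continuous and the interval $[R,\rho]$ is compact, $g^{m-1}$ is bounded below by a positive constant there, and consequently $z_c'$ is bounded on $[R,\rho)$. Thus $z_c$ extends continuously to $\rho$, and applying the local existence argument at $\rho$ gives an extension past $\rho$, contradicting maximality of $[R,\rho)$. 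This forces $z_c(\rho^-)=+\infty$, as required; monotonicity of $z_c$ is automatic from $z_c'>0$.

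The main technical point is the reduction to $(\star)$: one must check that $\varphi$ is genuinely invertible on the relevant range so that $\varphi^{-1}$ can be inserted, and verify that continuous fixed points of $T$ yield $C^1$ solutions of the original equation with the correct sign of $z'$. Once $(\star)$ is available, the remaining steps are standard ODE theory.
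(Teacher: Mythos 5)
Your proposal is correct and follows essentially the same route as the paper: the same Volterra-type integral reformulation (your $(\star)$ is exactly the paper's operator $T_c$), local existence via Schauder's fixed point theorem with compactness from Ascoli--Arzel\`a, and the blow-up alternative obtained by showing that a bounded solution on a finite maximal interval has bounded derivative and hence can be continued past the endpoint. Your explicit remark on the invertibility of $\varphi$ (guaranteed here by the strict monotonicity required in $\Se$ for $h$ the metric tensor) is a point the paper uses implicitly.
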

\begin{proof}
We sketch the main steps. First, we prove local existence. For
every chosen $r\in (R,R+1)$, denote with $A_\eps$ the $\eps$-ball
centered at the constant function $\vartheta$ in $C^0([R,r],
\|\cdot\|_{L^\infty})$. We look for a fixed point of the Volterra operator
$T_c$ defined by
\begin{equation}\label{operatorT}
T_c(u)(t) = \vartheta + \frac 1 c \int_R^t
\varphi^{-1}\left(\frac{g^{m-1}(R)\varphi(c\mu)}{g^{m-1}(s)} +
\int_R^s \frac{g^{m-1}(\tau)}{g^{m-1}(s)}B(cu(\tau))\di
\tau\right)\di s
\end{equation}
It is simple matter to check the following properties:
\begin{itemize}
\item[$(i)$] If $|r-R|$ is sufficiently small, $T_c(A_\eps)\subset
A_\eps$;
\item[$(ii)$] There exists a constant $C>0$, independent of $r\in
(R,R+1)$, such that $|T_cu(t)-T_cu(s)|\le C|t-s|$ for every $u\in
A_\eps$. By Ascoli-Arzel\`a theorem, $T_c$ is a compact
operator.
\item[$(iii)$] $T_c$ is continuous. To prove this, let
$\{u_j\}\subset A_\eps$ be such that $\|u_j-u\|_{L^\infty}\ra 0$,
and use Lebesgue convergence theorem in the definition of $T_c$ to
show that $T_cu_j\ra T_cu$ pointwise. The convergence is indeed
uniform by $(ii)$.
\end{itemize}
By Schauder theorem (\cite{GT}, Theorem 11.1), $T_c$ has a fixed point $z_c$.
Differentiating $z_c=T_cz_c$ we deduce that $z_c'>0$ on $[R,r]$,
hence $z_c$ is positive and increasing. Therefore, $z_c$ is also a
solution of \eqref{cauchyloc}. This solution can be extended up to
a maximal interval $[R,\rho)$. If by contradiction the
(increasing) solution $z_c$ satisfies $z_c(\rho^-)=z_c^\star
<+\infty$, differentiating $z_c=T_cz_c$ we would argue that
$z_c'(\rho^-)$ exists and is finite. Hence, by local existence
$z_c$ could be extended past $\rho$, a contradiction.
%
\end{proof}
We are going to prove that, if $B(t)$ does not grow too fast and
under a reasonable structure condition on $M_g$, the solution
$z_c$ of \eqref{cauchyloc} is defined on $[R,+\infty)$. To do
this, we first need some definitions. We consider the initial condition
$\vartheta=0$. For convenience, we further require the following
assumptions:
\begin{equation}\label{convenience}
\varphi \in C^1(\R^+), \qquad a_2^{-1}t^{p-1} \le t\varphi'(t) \le
a_1+a_2 t^{p-1} \quad \text{on } \R^+,
\end{equation}
for some positive constants $a_1,a_2$. Define
$$
K_{\mu}(t) = \int_{\mu}^ts\varphi'(s)\di s, \qquad \beta(t) =
\int_{0}^t B(s)\di s.
$$
Note that $\beta(t)$ is non-decreasing on $\R^+$ and that, for
every $\mu \ge 0$, $K_\mu$ is strictly increasing. By
\eqref{convenience}, $K_\mu(+\infty)=+\infty$. We focus our
attention on the condition
\begin{equation}\label{KeOs}\tag{$\urcorner KO$}
\frac{1}{K_\mu^{-1}(\beta(s))} \not \in L^1(+\infty).
\end{equation}
This (or, better, it opposite) is called the Keller-Osserman
condition. Originating, in the quasilinear setting, from works of
J.B. Keller \cite{keller} and R. Osserman \cite{osserman}, it has
been the subject of an increasing interest in the last years. The
interested reader can consult, for instance,
\cite{FPR2}, \cite{MMMR}, \cite{MRS}.
Note that the validity of \eqref{KeOs} is independent of the
choice of $\mu \in [0,1)$, and we can thus refer \eqref{KeOs} to
$K_0=K$. This follows since, by \eqref{convenience}, $K_\mu(t)
\asymp t^p$ as $t\rightarrow +\infty$, where the constant is
independent of $\mu$, and thus $K_\mu^{-1}(s) \asymp s^{1/p}$ as
$s\ra +\infty$, for some constants which are uniform when $\mu \in
[0,1)$. Therefore, \eqref{KeOs} is also equivalent to
\begin{equation}\label{KOsemplice}
\frac{1}{\beta(s)^{1/p}} \not \in L^1(+\infty)
\end{equation}
\begin{lemma}\label{lemma2}
Under the assumptions of the previous proposition and subsequent
discussion, suppose that $g'\ge 0$ on $\R^+$. If
\begin{equation}\label{KO}\tag{$\urcorner KO$}
\frac{1}{K^{-1}(\beta(s))} \not \in L^1(+\infty),
\end{equation}
then, for every choice of $c\in (0,1]$, the solution $z_c$ of
\eqref{cauchyloc} is defined on $[R,+\infty)$.
\end{lemma}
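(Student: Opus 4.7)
The plan is to argue by contradiction. Suppose that the maximal interval is $[R,\rho)$ with $\rho<+\infty$; by the previous lemma $z_c(\rho^-)=+\infty$. I would derive an energy inequality for $z_c$ by multiplying the ODE by $z_c'$ and exploiting the sign $g'\ge 0$ to throw away the curvature term, then separate variables and integrate to contradict the Keller--Osserman condition \eqref{KO}.

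First, since $z_c'>0$, expanding the derivative in \eqref{cauchyloc} and dividing by $g^{m-1}$ gives
$$(m-1)\frac{g'}{g}\varphi(cz_c') + c\varphi'(cz_c')z_c'' = B(cz_c).$$
Multiplying by $z_c'$ and noting that $c\varphi'(cz_c')z_c''z_c' = \tfrac{1}{c}\tfrac{d}{dr}K(cz_c')$ while $B(cz_c)z_c' = \tfrac{1}{c}\tfrac{d}{dr}\beta(cz_c)$, one obtains, after multiplying through by $c$,
$$(m-1)\,c\frac{g'}{g}\,\varphi(cz_c')\,z_c' \;+\; \frac{d}{dr}K(cz_c') \;=\; \frac{d}{dr}\beta(cz_c).$$
Because $g'\ge 0$, $z_c'>0$ and $\varphi>0$ on $\R^+$, the first term is nonnegative; integrating from $R$ to $r$ and using $z_c(R)=0$, $z_c'(R)=\mu$ yields the fundamental a priori bound
$$K(cz_c'(r)) \;\le\; \beta(cz_c(r)) + K(c\mu) \qquad \text{on } [R,\rho).$$

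Next I split into two cases. If $\beta(cz_c(r))\le K(c\mu)$ throughout $[R,\rho)$, then $cz_c'(r)\le K^{-1}(2K(c\mu))$ is bounded, so $z_c$ grows at most linearly and $z_c(\rho^-)<+\infty$, contradicting the previous lemma. Otherwise there exists $r_0\in (R,\rho)$ with $\beta(cz_c(r))\ge K(c\mu)$ for $r\ge r_0$, and on $[r_0,\rho)$
$$K(cz_c'(r)) \;\le\; 2\beta(cz_c(r)), \qquad\text{i.e.}\qquad \frac{cz_c'(r)}{K^{-1}\!\bigl(2\beta(cz_c(r))\bigr)}\;\le\; 1.$$
Integrating over $[r_0,\rho)$ and changing variables $s=cz_c(r)$, using $cz_c(\rho^-)=+\infty$, one obtains
$$\int_{cz_c(r_0)}^{+\infty}\frac{ds}{K^{-1}(2\beta(s))} \;\le\; \rho-r_0 \;<\; +\infty.$$
Finally, the assumption \eqref{convenience} forces $K(t)\asymp t^p$ at infinity, hence $K^{-1}(2s)\asymp K^{-1}(s)$, so the displayed integral is comparable to $\int^{+\infty}ds/K^{-1}(\beta(s))$, which diverges by \eqref{KO}. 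This contradiction shows $\rho=+\infty$.

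The main obstacle is the bookkeeping around the energy identity: one must recognize the exact combination $s\varphi'(s)$ in the primitive $K$, and one needs $g'\ge 0$ precisely in order to discard the otherwise uncontrolled term $(m-1)(g'/g)\varphi(cz_c')z_c'$, since for a general model one has no sign information on $g'$. Once this step is in place, the rest is the standard Keller--Osserman separation-of-variables routine, with the only minor subtlety being the case distinction ensuring that $\beta(cz_c)$ eventually dominates the constant $K(c\mu)$.
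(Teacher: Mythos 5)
Your argument is correct and is essentially the paper's proof: multiply the ODE by $z_c'$, discard the nonnegative $(m-1)(g'/g)\varphi(cz_c')z_c'$ term using $g'\ge 0$, integrate to get a Keller--Osserman-type bound on $K(cz_c')$ by $\beta(cz_c)$, and separate variables to contradict \eqref{KO}. The only difference is bookkeeping: the paper works with $K_\mu$ (primitive from $\mu$), which makes the initial-condition constant nonpositive and avoids your case split and the factor-$2$ absorption via $K(t)\asymp t^p$ — a device the paper has in any case already justified when noting that \eqref{KeOs} is independent of $\mu$.
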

\begin{proof}
From $[g^{m-1}\varphi(cz')]' = g^{m-1}B(cz)$ and $g'\ge 0$ we
deduce that
$$
\varphi'(cz')cz'' \le B(cz), \quad \text{so that} \quad cz'
\varphi'(cz')cz'' \le B(cz)cz' = (\beta(cz))'.
$$
Hence integrating and changing variables we obtain
$$
K_\mu(cz')= \int_{\mu}^{cz'} s\varphi'(s)\di s \le \int_{0}^{cz}
B(s)\di s =\beta(cz).
$$
Applying $K_\mu^{-1}$, $cz'=K_\mu^{-1}(\beta(cz))$. Since $z'>0$,
we can divide the last equality by $K_\mu^{-1}(\beta(cz))$ and
integrate on $[R,t)$ to get, after changing variables,
$$
\int_0^{cz(t)} \frac{\di s}{K_\mu^{-1}(\beta(s))} \le  t-R.
$$
By \eqref{KO}, we deduce that $\rho$ cannot be finite for any
fixed choice of $c$.
\end{proof}
For every $R>0$, we have produced a radial function
$u_c=(cz_c)\circ r$ which solves $L_\varphi u_c = B(u_c)$ on
$M\backslash B_R$ and $u_c=0$ on $B_R$. The next step is to
guarantee that, up to choosing $\mu,c$ appropriately, $u_c$ can be arbitrarily small
on some bigger ball $B_{R_1}$. The basic step is a uniform control of the norm of $z_c$ on $[R,R_1]$ with respect to the variable $c$, up to choosing $\mu=\mu(c)$ appropriately small. This requires a further control on $B(t)$, this time on the whole $\R^+$ and not only in a neighbourhood of $+\infty$.
\begin{lemma}
Under the assumptions of the previous proposition, suppose further that
\begin{equation}\label{omogeneitaB}
B(t) \le b_1 t^{p-1} \qquad \text{on } \R^+.
\end{equation}
Then, for every $R_1>R$ and every $c\in (0,1]$, there exists $\mu>0$ depending $c$ such that the solution
$z_c$ of \eqref{cauchyloc} with $\vartheta=0$ satisfies
\begin{equation}\label{uniformlinfty}
\norm{z_c}_{L^\infty([R,R_1])} \le K,
\end{equation}
for some $K>0$ depending on $R,R_1$, on $a_2$ in \eqref{convenience} and on $b_1$ in \eqref{omogeneitaB} but not on $c$.
\end{lemma}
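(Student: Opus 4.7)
The plan is to repeat the Keller--Osserman type computation already carried out in the previous lemma, but now exploit the polynomial growth assumption \eqref{omogeneitaB} on $B$ to upgrade the resulting differential inequality to a linear one, and then close the estimate via Gr\"onwall. More precisely, starting from the ODE in \eqref{cauchyloc} and using $g' \ge 0$, the same manipulation as in the proof of the previous lemma (multiplying $\varphi'(cz_c')(cz_c'') \le B(cz_c)$ by $cz_c' \ge 0$ and integrating from $R$ to $t$ with $z_c(R)=0$, $z_c'(R)=\mu$) yields
$$
\int_{c\mu}^{cz_c'(t)} s\varphi'(s)\,\di s \;\le\; \int_0^{cz_c(t)} B(s)\,\di s.
$$

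Next, I would plug in the two sharp bounds. From the left-hand inequality in \eqref{convenience}, $s\varphi'(s) \ge a_2^{-1} s^{p-1}$, so the left-hand side is at least $\frac{1}{a_2 p}\bigl((cz_c'(t))^p-(c\mu)^p\bigr)$. From \eqref{omogeneitaB}, $B(s) \le b_1 s^{p-1}$, so the right-hand side is at most $\frac{b_1}{p}(cz_c(t))^p$. Rearranging and taking $p$-th roots (using $(A+B)^{1/p} \le A^{1/p}+B^{1/p}$ for $p\ge 1$), the differential inequality collapses to the affine form
$$
(cz_c)'(t) \;\le\; c\mu + \lambda\,(cz_c)(t), \qquad \lambda \doteq (a_2 b_1)^{1/p},
$$
valid on the whole maximal existence interval (which, as noted, is $[R,+\infty)$ since \eqref{omogeneitaB} forces the Keller--Osserman condition \eqref{KOsemplice}).

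Applying Gr\"onwall to $u(t) \doteq cz_c(t)$ with $u(R)=0$ gives
$$
cz_c(t) \;\le\; \frac{c\mu}{\lambda}\bigl(e^{\lambda(t-R)}-1\bigr)
\qquad \text{for } t \in [R,R_1],
$$
and dividing by $c>0$ yields
$$
z_c(t) \;\le\; \frac{\mu}{\lambda}\bigl(e^{\lambda(R_1-R)}-1\bigr).
$$
Thus it suffices to pick, for each $c\in(0,1]$, any $\mu=\mu(c)>0$ for which the right-hand side is bounded by a preassigned constant $K$ (depending only on $R,R_1,a_2,b_1$); for instance, taking $\mu$ independent of $c$ already works.

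The routine but important point is that the bound depends on $c$ only through the prefactor, so after dividing by $c$ it disappears entirely; this is the only place where \eqref{omogeneitaB} is genuinely used, and it is precisely the matching of the growth of $\beta$ with the $p$-th power behavior of $K_\mu$ that turns the general Keller--Osserman inequality into a linear Gr\"onwall estimate. No real obstacle is anticipated beyond verifying that the $p$-th root manipulation is legitimate (guaranteed by $p>1$) and that the initial value $z_c'(R)=\mu$ appears correctly as the lower limit $c\mu$ in the integral of $s\varphi'(s)$.
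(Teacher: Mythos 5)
Your argument is correct, and it takes a genuinely different route from the paper's. The paper works directly with the Volterra fixed-point representation \eqref{operatorT}: using the monotonicity of $g$ and of $z_c$ it bounds the inner integral by $(R_1-R)B(cz_c(s))$, differentiates to get $\varphi(cz_c')\le\varphi(c\mu)+(R_1-R)B(cz_c)$, and then inverts $\varphi$ via \eqref{convenience}. The price of that route is the inhomogeneous term $\varphi(c\mu)$, which does not scale like $c^{p-1}$; this is exactly why the paper must choose $\mu\le c^{-1}\varphi^{-1}(c^{p-1})$ before dividing by $c^{p-1}$ and applying Gronwall. You instead reuse the Keller--Osserman energy inequality $K_{c\mu}(cz_c')\le\beta(cz_c)$ from Lemma \ref{lemma2} and substitute the two power bounds; since every term is then homogeneous of degree $p$ in $(cz_c,cz_c',c\mu)$, the $c$-dependence cancels exactly after taking $p$-th roots, and you even obtain the slightly stronger conclusion that $\mu$ may be taken independent of $c$ (the statement only asks for existence of some $\mu(c)$, so this is fine, and it is consistent with the later lemma where $\mu$ is shrunk further for other reasons). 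Two small points worth making explicit: the lower bound $\frac{1}{a_2p}\bigl((cz_c')^p-(c\mu)^p\bigr)$ for $K_{c\mu}(cz_c')$ is only valid at points where $cz_c'(t)\ge c\mu$, but at points where $cz_c'(t)<c\mu$ the affine inequality $cz_c'\le c\mu+\lambda cz_c$ holds trivially, so nothing is lost; and the manipulation $\varphi'(cz_c')cz_c''\le B(cz_c)$ presupposes $z_c\in C^2$, which follows (as implicitly in Lemma \ref{lemma2}) from $z_c'>0$, $\varphi\in C^1(\R^+)$ and $\varphi'>0$ by inverting $\varphi$ in the ODE. With these remarks your proof is complete.
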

\begin{proof}
Note that, by \eqref{omogeneitaB}, \eqref{KO} (equivalently, \eqref{KOsemplice}) is satisfied. Hence, $z_c$ is defined on $[R,+\infty)$ for every choice of $\mu, c$. Fix $R_1>R$. Setting $\vartheta=0$ in the expression \eqref{operatorT} of the operator $T_c$, and using the monotonicity of $g$ and $z_c$, we deduce that
$$
\begin{array}{lcl}
u_c(t) & \le & \disp \frac 1c \int_R^t \varphi^{-1}\left(\varphi(c\mu) +
\int_R^s B(cu(\tau))\di
\tau\right)\di s \\[0.4cm]
& \le & \disp \frac 1c \int_R^t \varphi^{-1}\Big(\varphi(c\mu) +
(R_1-R) B(cu_c(s))\Big)\di s.
\end{array}
$$
Differentiating, this gives
$$
\varphi(cu'_c(t)) \le \varphi(c\mu) + (R_1-R)B(cu_c(t)).
$$
Now, from \eqref{convenience} and \eqref{omogeneitaB} we get
\begin{equation}\label{tecnica}
c^{p-1}(u_c')^{p-1} \le a_2 \varphi(c\mu) + a_2(R_1-R)b_2c^{p-1}u_c^{p-1}.
\end{equation}
Choose $\mu$ in such a way that
$$
\varphi(c\mu) \le c^{p-1}, \qquad \text{that is,} \qquad \mu\le \frac 1c \varphi^{-1}(c^{p-1})
$$
Then, dividing \eqref{tecnica} by $c^{p-1}$ and applying the elementary inequality $(x+y)^a \le 2^a(x^a+y^a)$ we obtain the existence of a constant $K=K(R_1,R,a_2,b_2)$ such that
$$
u_c'(t) \le K(1+u_c(t)).
$$
Estimate \eqref{uniformlinfty} follows by applying Gronwall inequality.
\end{proof}
\begin{cor}\label{evansradiali}
Let the assumptions of the last proposition be satisfied. Then, for each triple $(B_R,B_{R_1},\eps)$, there exists a positive, radially increasing solution of $L_\varphi u =B(u)$ on $M_g\backslash B_R$ such that $u=0$ on $\partial B_R$ and $u< \eps$ on $B_{R_1}\backslash B_R$. 
\end{cor}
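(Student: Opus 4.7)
The corollary essentially gathers and rescales the output of the preceding lemmas, so the proof is largely a matter of bookkeeping rather than new technique. My plan is the following.

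First, I would pick an arbitrary $c\in(0,1]$ (to be fixed at the end), choose $\mu=\mu(c)$ as in the last lemma, and let $z_c\in C^1([R,+\infty))$ be the positive, increasing solution of the Cauchy problem \eqref{cauchyloc} with $\vartheta=0$. Its existence on all of $[R,+\infty)$ is guaranteed by the previous lemma (note that \eqref{omogeneitaB} forces $\beta(s)\lesssim s^p$, so \eqref{KO} holds automatically). Setting $u_c(x)\doteq c\,z_c(r(x))$, the radial formula \eqref{filaradiale} together with the very equation \eqref{cauchyloc} gives
\begin{equation*}
L_\varphi u_c \;=\; g(r)^{1-m}\bigl[g(r)^{m-1}\varphi(cz_c'(r))\bigr]' \;=\; B(cz_c(r)) \;=\; B(u_c).
\end{equation*}
Positivity and the fact that $u_c$ is radially increasing are inherited from the corresponding properties of $z_c$, and $u_c\equiv 0$ on $\partial B_R$ since $z_c(R)=0$.

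It remains to make $u_c$ small on $B_{R_1}\setminus B_R$. This is exactly where the uniform estimate \eqref{uniformlinfty} is used: the last lemma yields a constant $K=K(R,R_1,a_2,b_1)$, independent of $c\in(0,1]$, such that $\|z_c\|_{L^\infty([R,R_1])}\le K$. Hence on $B_{R_1}\setminus B_R$ we have the pointwise bound $u_c\le cK$. Choosing
\begin{equation*}
c \;\doteq\; \min\!\Bigl\{1,\tfrac{\eps}{2K}\Bigr\}
\end{equation*}
and the corresponding $\mu=\mu(c)$ from the lemma, we obtain $u_c<\eps$ on $B_{R_1}\setminus B_R$, finishing the construction.

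There is no real obstacle beyond verifying the chain of compatibilities: the choice of $\mu(c)$ has to be the one producing the $c$-uniform bound, and one must keep $c\le 1$ so that the earlier lemmas remain applicable. All other properties (smooth dependence through $T_c$, monotonicity, the distributional identity for $L_\varphi$) have been established in the preceding material.
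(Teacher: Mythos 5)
Your argument is correct and coincides with the paper's own proof: both choose $\mu=\mu(c)$ so that the uniform bound $\norm{z_c}_{L^\infty([R,R_1])}\le K$ holds with $K$ independent of $c$, observe via \eqref{filaradiale} that $u_c=(cz_c)\circ r$ solves $L_\varphi u_c=B(u_c)$ with the stated boundary and monotonicity properties, and then take $c$ small enough that $cK<\eps$. The extra factor of $2$ in your choice of $c$ is harmless.
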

\begin{proof}
By the previous lemma, for every $c\in (0,1]$ we can choose $\mu=\mu(c)>0$ such that the resulting solution $z_c$ of \eqref{cauchyloc} is uniformly bounded on $[R,R_1]$ by some $K$ independent of $c$. Since, by \eqref{filaradiale}, $u_c = (cz_c) \circ r$ solves $L_\varphi u_c=B(u_c)$, it is enough to choose $c<\eps/K$ to get a desired $u=u_c$ for the triple $(B_R,B_{R_1},\eps)$. 
\end{proof}
To conclude, we shall show that Evans potentials exist for any triple $(K,\Omega,\eps)$, not necessarily given by concentric balls centered at the origin. In order to do so, we use a comparison argument with suitable radial Evans potentials. Consequently, we need to ensure that, for careful choices of $c,\mu$, the radial Evans potentials do not overlap.
\begin{lemma}
Under the assumptions of Lemma \ref{lemma2}, Let $0<R$ be chosen, and let $w$ be a positive, increasing $C^1$ solution of
\begin{equation}
\left\{ \begin{array}{l}
\big[g^{m-1}\varphi(w')\big]' =
g^{m-1}B(w) \qquad \text{on } [R,+\infty) \\[0.2cm]
w(R)=0, \quad w'(R)=w'_R> 0
\end{array}\right.
\end{equation}
Fix $\hat{R}>R$. Then, for every $c>0$, there exists $\mu= \mu(c,R,\hat{R})$ small enough that the solution $z_c$ of \eqref{cauchyloc}, with $R$ replaced by $\hat{R}$, satisfies $cz_c < w$ on $[\hat{R},+\infty)$.
\end{lemma}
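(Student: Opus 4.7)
The key observation is that, setting $v_c \doteq cz_c$, the function $v_c$ satisfies the \emph{same} ODE as $w$, namely
\[
\big[g^{m-1}\varphi(u')\big]' = g^{m-1} B(u),
\]
but with initial data $v_c(\hat R) = 0$, $v_c'(\hat R) = c\mu$, whereas $w(\hat R) > 0$ (since $w(R)=0$ and $w$ is increasing) and $w'(\hat R) > 0$ (see below). Thus the plan is to reduce the proof to a comparison between two solutions of the same autonomous-looking ODE on $[\hat R, +\infty)$, where the only free parameter is the initial slope $c\mu$.

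I shall introduce the momentum functional
\[
p_u(t) \doteq g^{m-1}(t)\,\varphi(u'(t))
\]
associated to any positive increasing solution $u$, for which $p_u'(t) = g^{m-1}(t)B(u(t)) \ge 0$; in particular, $p_u$ is non-decreasing. Applied to $w$ this gives $p_w(\hat R) \ge p_w(R) = g^{m-1}(R)\varphi(w'_R) > 0$, so $w'(\hat R) > 0$. I then choose
\[
\mu \in \Big(0,\, \tfrac{w'(\hat R)}{c}\Big),
\]
which, by the strict monotonicity of $\varphi$ granted by \eqref{convenience}, ensures the strict initial inequality $p_{v_c}(\hat R) < p_w(\hat R)$.

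The argument proceeds by contradiction: suppose $cz_c < w$ fails, and set
\[
t^* \doteq \inf\{t > \hat R : v_c(t) \ge w(t)\}.
\]
By continuity $v_c(t^*) = w(t^*)$, and $v_c \le w$ on $[\hat R, t^*]$. The monotonicity of $B$ then gives $p_{v_c}'(t) \le p_w'(t)$ on that interval; integrating from $\hat R$ and using $p_{v_c}(\hat R) < p_w(\hat R)$, I obtain $p_{v_c}(t) < p_w(t)$, hence (again by strict monotonicity of $\varphi$) $v_c'(t) < w'(t)$ on all of $[\hat R, t^*]$. A further integration from $\hat R$ to $t^*$ yields
\[
v_c(t^*) = v_c(t^*) - v_c(\hat R) < w(t^*) - w(\hat R) < w(t^*),
\]
contradicting $v_c(t^*) = w(t^*)$. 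Hence $v_c < w$ strictly on $[\hat R, +\infty)$.

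The main conceptual obstacle is the first step: recognising that the $c$-rescaling turns $cz_c < w$ into a comparison between two solutions of the \emph{same} scalar ODE. Once this is in place, the whole proof is a clean double integration in the momentum variable, and the smallness of $\mu$ is used only to secure the strict inequality between the initial momenta. No growth condition on $B$ at infinity enters, which is consistent with the fact that the global existence of $z_c$ is already guaranteed by Lemma \ref{lemma2}.
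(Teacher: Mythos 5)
Your proof is correct and follows essentially the same route as the paper's: a first-crossing comparison using the integrated (momentum) form of the ODE, the monotonicity of $B$, the strict monotonicity of $\varphi$, and a choice of $\mu$ securing the strict initial inequality $g^{m-1}(\hat R)\varphi(c\mu)<p_w(\hat R)$. The only cosmetic difference is the endgame: the paper contradicts the derivative inequality $cz_c'(r)\ge w'(r)$ forced at the first touching point, while you integrate $v_c'<w'$ up to $t^*$ to contradict $v_c(t^*)=w(t^*)$; both are equally valid.
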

\begin{proof}
Let $\mu$ satisfy $g^{m-1}(R)\varphi(w'_R)>g^{m-1}(\hat{R})\varphi(c\mu)$. Suppose by contradiction that $\{cz_c\ge w\}$ is a closed, non-empty set. Let $r>\hat{R}$ be the first point where $cz_c = w$. Then, $cz_c \le w$ on $[\hat{R},r]$, thus $cz_c'(r)\ge w'(r)$. However, from the chain of inequalities
$$
\begin{array}{lcl}
\varphi(w'(r)) & = & \disp \frac{g^{m-1}(R)\varphi(w'_R)}{g^{m-1}(r)} + \int_R^{r} B(w(\tau))\di \tau \\[0.3cm]
& > & \disp \frac{g^{m-1}(\hat{R})\varphi(c\mu)}{g^{m-1}(r)} + \int_{\hat{R}}^r B(cz_c(\tau))\di \tau = \varphi(cz_c'(r)),
\end{array}
$$
and from the strict monotonicity of $\varphi$ we deduce $w'(r)>cz'_c(r)$, a contradiction.
\end{proof}
\begin{cor}\label{notoverlap}
For each $u$ constructed in Corollary \ref{evansradiali}, and for every $R_2>R$, there exists a positive, radially increasing solution $w$ of $L_{\oF}w =0$ on $M_g\backslash B_{R_2}$ such that $w=0$ on $\partial B_{R_2}$ and $w\le u$ on $M\backslash B_{R_2}$. 
\end{cor}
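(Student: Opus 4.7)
The plan is to apply the previous lemma almost verbatim, converting its statement about radial profiles on $[R,+\infty)$ into a statement about radial functions on $M_g\setminus B_{R_2}$. The function $u$ produced by Corollary \ref{evansradiali} has the form $u = (c z_c)\circ r$ where $z_c$ solves \eqref{cauchyloc} with $\vartheta=0$. Writing $\tilde u(r) \doteq cz_c(r)$, a direct check using $\tilde u' = cz_c'$ shows that $\tilde u$ is a positive, increasing $C^1$ solution of $\bigl[g^{m-1}\varphi(\tilde u')\bigr]' = g^{m-1} B(\tilde u)$ on $[R,+\infty)$ with $\tilde u(R)=0$ and $\tilde u'(R) = c\mu>0$. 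Thus $\tilde u$ fits the hypotheses placed on the first solution in the previous lemma.

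Next, fix $R_2>R$. Apply the previous lemma with that $\tilde u$ in the role of the given ``upper'' solution, with $\hat R = R_2$, and choose any $c'>0$ (for example $c' =1$). The lemma produces $\mu' = \mu'(c',R,R_2)>0$ so that the solution $z_{c'}$ of \eqref{cauchyloc}, with $R$ replaced by $R_2$, $\vartheta=0$, and $z_{c'}'(R_2)=\mu'$, satisfies $c' z_{c'}<\tilde u$ on $[R_2,+\infty)$. Here one must make sure $z_{c'}$ is defined on the whole interval $[R_2,+\infty)$, which is guaranteed by Lemma \ref{lemma2}: indeed $g'\ge 0$ is assumed in the ambient setting, and the Keller--Osserman condition \eqref{KO} is automatic under the growth bound \eqref{omogeneitaB} already imposed to run Corollary \ref{evansradiali}.

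Finally, define $w(x) \doteq c' z_{c'}(r(x))$ on $M_g\setminus B_{R_2}$, extended by $0$ on $\partial B_{R_2}$. Then $w$ is radial and increasing, $w=0$ on $\partial B_{R_2}$, and $w<u$ on $M_g\setminus B_{R_2}$ by the pointwise comparison supplied by the lemma. Formula \eqref{filaradiale}, applied to $z_{c'}$ with the constant $c'$ absorbed into the profile, gives $L_\varphi w = B(w)$ on $M_g\setminus B_{R_2}$, i.e.\ $L_{\oF}w=0$ there.

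There is no real obstacle: the content of the corollary is a direct reformulation of the previous lemma, the only mildly technical points being the two identifications just described (namely that $cz_c$ plays the role of ``$w$'' in the lemma, and that Lemma \ref{lemma2} guarantees global existence of the comparison solution $z_{c'}$ on $[R_2,+\infty)$).
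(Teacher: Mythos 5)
Your argument is correct and is precisely the "straightforward application of the last Lemma" that the paper itself gives as its (one‑line) proof: identify the profile $cz_c$ of $u$ with the function $w$ in that lemma, invoke Lemma \ref{lemma2} (via \eqref{omogeneitaB}$\Rightarrow$\eqref{KO}) for global existence of the comparison solution starting at $R_2$, and read off the conclusion through \eqref{filaradiale}. Nothing further is needed.
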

\begin{proof}
It is a straightforward application of the last Lemma.
\end{proof}
We are now ready to state the main result of this section
\begin{teo}
Let $M_g$ be a model with origin $o$ and non-decreasing defining function $g$. Let $\varphi$ satisfies \eqref{convenience} with $a_1=0$, and suppose that $B(t)$ satisfies \eqref{omogeneitaB}. Define $L_{\oF}$ according to $L_{\oF}u=L_\varphi u - B(u)$. Then, properties $(K)$, $(L)$ (for $\hol_\loc$
or $L^\infty$) and $(E)$ restricted to triples $(K,\Omega, \eps)$ with $o\in K$ are equivalent, and also equivalent to either
\begin{equation}\label{modstocast2}
\left(\frac{\vol(B_r)}{\vol(\partial B_r)}\right)^{\frac{1}{p-1}} \not \in L^1(+\infty) \quad \text{if } B>0 \text{ on } \R^+,
\end{equation}
or
\begin{equation}\label{modparab2}
\left(\frac{1}{\vol(\partial B_r)}\right)^{\frac{1}{p-1}} \not \in L^1(+\infty) \quad \text{otherwise.}
\end{equation}
\end{teo}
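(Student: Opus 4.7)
The equivalences not involving $(E)$ are already essentially known: $(L)_{\hol_\loc} \Leftrightarrow (L)_{L^\infty} \Leftrightarrow (K)$ is Theorem \ref{mainteo}, while Proposition \ref{LKWmodelli} identifies these with \eqref{modstocast} (if $B>0$ on $\R^+$) or \eqref{modparab} (otherwise). To translate them into the volumetric forms \eqref{modstocast2}-\eqref{modparab2}, I would observe that \eqref{convenience} with $a_1=0$ (together with $\varphi(0)=0$) yields $\varphi(t)\asymp t^{p-1}$ by integrating the bounds on $t\varphi'(t)$, hence $\varphi^{-1}(s)\asymp s^{1/(p-1)}$; combined with $\vol(\partial B_r)=g(r)^{m-1}$ and $\int_R^r g(t)^{m-1}\di t = \vol(B_r)-\vol(B_R)$, the integrability at infinity of $V_\pa$ and $V_\stc$ from \eqref{vez} translates exactly into that of $(\vol(\partial B_r))^{-1/(p-1)}$ and $(\vol(B_r)/\vol(\partial B_r))^{1/(p-1)}$.

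For the Evans property, $(E)\Rightarrow(K)$ is immediate since every Evans potential is a \Ka potential. For $(K)\Rightarrow(E)$, fix a triple $(K,\Omega,\eps)$ with $o\in K$ and choose radii with $\bar B_{R_0}\subset K\subset\bar K\subset B_{R_2}$ and $\bar\Omega\subset B_{R_1}$. Corollary \ref{evansradiali} produces a radial solution $u$ of $L_\oF u=0$ on $M\setminus B_{R_0}$, extended by $0$ on $\bar B_{R_0}$, with $u\equiv 0$ on $\partial B_{R_0}$ and $u<\eps$ on $B_{R_1}\setminus B_{R_0}$; under $(K)$, the volumetric characterization above forces $u(r)\to+\infty$, so $u$ is an exhaustion. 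Corollary \ref{notoverlap} then supplies a second radial solution $\tilde u$ on $M\setminus B_{R_2}$, vanishing on $\partial B_{R_2}$ and dominated by $u$ there; this will play the role of a lower barrier outside $B_{R_2}$.

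For each $n$ so large that $\bar B_{R_2}\cup\bar\Omega\subset B_n$, Theorem \ref{existostacolo} (with obstacle $\psi=-\infty$) together with Corollary \ref{obs_bc} yields a solution $w_n\in C^0(\overline{B_n\setminus K})$ of the Dirichlet problem
\[
L_\oF w_n=0 \ \text{in}\ B_n\setminus\bar K,\quad w_n=0\ \text{on}\ \partial K,\quad w_n=u(R_n)\ \text{on}\ \partial B_n.
\]
Since $\bar B_{R_0}\subset \bar K$, on $B_n\setminus\bar K$ the function $u$ coincides with its radial ``outside'' part and solves $L_\oF u=0$ there; comparison (Proposition \ref{teo_comp}) gives $0\le w_n\le u$ and, applied recursively, the monotonicity $w_{n+1}\le w_n$ (the inequality on $\partial B_n$ reduces to $w_{n+1}\le u=w_n$). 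Extending each $w_n$ by $0$ on $\bar K$ and by $u$ on $M\setminus B_n$ produces continuous $\hat w_n$ on $M$ decreasing pointwise to some $\hat w$; since on each compact subset of $M\setminus\bar K$ one eventually has $\hat w_n=w_n$, the last part of Proposition \ref{convergence} ensures $L_\oF \hat w=0$ on $M\setminus\bar K$. A lower comparison of $w_n$ with $\tilde u$ on $B_n\setminus B_{R_2}$ (using $\tilde u=0\le w_n$ on $\partial B_{R_2}$ and $\tilde u(R_n)\le u(R_n)=w_n$ on $\partial B_n$) passes to the limit and yields $\hat w\ge \tilde u>0$ on $M\setminus \bar B_{R_2}$.

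The remaining Evans-potential conditions follow readily: continuity of $\hat w$ on $M$ with $\hat w=0$ on $\bar K$ comes from $0\le\hat w\le w_n$ together with continuity of each $w_n$ on $\partial K$; the bound $\hat w\le u\le\eps$ on $\Omega\setminus K\subset B_{R_1}\setminus B_{R_0}$ is immediate; and $\hat w$ is an exhaustion because $\hat w\ge\tilde u$ outside $\bar B_{R_2}$. The main obstacle is upgrading $\hat w>0$ from $M\setminus\bar B_{R_2}$ to the intermediate annular region $B_{R_2}\setminus\bar K$, where the radial lower barrier degenerates. I would overcome this via the Harnack-type strong maximum principle in Theorem \ref{harnack}: $\hat w$ is a non-negative solution of $L_\oF\hat w=0$ on the connected open set $M\setminus\bar K$, not identically zero because it is strictly positive outside $\bar B_{R_2}$, and must therefore be strictly positive throughout. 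Connectedness of $M\setminus\bar K$ is the natural setting compatible with $o\in K$ and $\bar K$ compact with Lipschitz boundary, under which the statement is to be read.
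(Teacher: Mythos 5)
Your reduction to $(L)\Rightarrow(E)$ and your construction of the candidate potential take a genuinely different route from the paper's: you exhaust $M\setminus\overline K$ by Dirichlet problems with boundary data borrowed from the radial barrier $u$ and pass to a decreasing limit, whereas the paper simply runs the subsolution--supersolution method (Theorem \ref{subsuper}) between the two radial barriers $w_1\le w_2$ supplied by Corollaries \ref{evansradiali} and \ref{notoverlap}. That part of your argument is sound, modulo the verification (needed for $\tilde u$ as well as for $u$) that the radial solutions really tend to $+\infty$; the paper gets this for free from $(L)$ by extending them with zero and invoking the Liouville property, which is cleaner than arguing through the ODE.

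The final positivity step, however, has a genuine gap. Theorem \ref{harnack} concerns non-negative functions with $L_{\oA}u\le 0$, while your $\hat w$ satisfies $L_{\oF}\hat w=0$, i.e. $L_{\oA}\hat w=B(\hat w)\ge 0$ --- the wrong inequality --- and the theorem says nothing about the operator with potential. This is not cosmetic: for $L_{\oF}u=\Delta_p u-B(u)$ with $B$ vanishing too slowly at $0$ (e.g. $B(t)=t^{q}$ with $q<p-1$), non-negative solutions can have dead cores, so strict positivity on the annulus $B_{R_2}\setminus\overline K$ cannot follow merely from $\hat w\ge 0$, $\hat w\not\equiv 0$ and connectedness. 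The correct tool --- and the precise point where the hypothesis \eqref{omogeneitaB} enters --- is the strong maximum principle of \cite{puccirigoliserrin} for $C^1$ supersolutions of $L_{\oF}$, which holds exactly when $1/K^{-1}(F(s))\notin L^1(0^+)$; the paper spends the last part of its proof deriving this condition from \eqref{convenience} and \eqref{omogeneitaB}. Replacing your appeal to Theorem \ref{harnack} by that argument (after noting $\hat w\in C^1$ via Remark \ref{ossregularity}) closes the gap.
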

\begin{proof}
From \eqref{convenience}, assumptions \eqref{modstocast2} and \eqref{modparab2} are equivalent, respectively, to \eqref{modstocast} and \eqref{modparab}.  Therefore, by Proposition \ref{LKWmodelli} and Theorem \ref{mainteo}, the result will be proved once we show that $(L)$ implies $(E)$ restricted to the triples $(K,\Omega,\eps)$ such that $o\in K$. Fix such a triple $(K, \Omega, \eps)$. Since $o\in K$ and $K$ is open, let $R< \rho$ be such that $B_R \Subset K \Subset \Omega \Subset B_\rho$. By making use of Corollary \ref{evansradiali} we can construct a  radially increasing solution $w_2$ of $L_{\oF}w_2 =0$ associated to the triple $(B_R,B_\rho,\eps)$. By $(L)$, $u$ must tend to $+\infty$ as $x$ diverges, for otherwise by the pasting Lemma \ref{pasting} the function $s$ obtained extending $w_2$ with zero on $B_R$ would be a bounded, non-negative, non-constant solution of $L_{\oF}s\ge 0$, contradiction. From Corollary \ref{notoverlap} and the same reasoning, we can produce another exhaustion $w_1$ solving $L_{\oF}w_1=0$ on $M\backslash B_{\rho}$, $w_1=0$ on $\partial B_\rho$ and $w_1\le w_2$ on $M\backslash B_\rho$. Setting $w_1$  equal to zero on $B_\rho$, by the pasting lemma $w_2$ is a global subsolution on $M$ below $w_2$. By the subsolution-supersolution method on $M\backslash K$, there exists a solution $w$ such that $w_1\le w\le w_2$. By construction, $w$ is an exhaustion and $w\le \eps$ on $\Omega \backslash K$. Note that, by Remark \ref{ossregularity}, from \eqref{convenience} with $a_1=0$ we deduce that $w \in C^1(M\backslash K)$. We claim that $w>0$ on $M\backslash K$. To prove the claim we can avail of the strong maximum principle in the form given in \cite{puccirigoliserrin}, Theorem 1.2. Indeed, again from \eqref{convenience} with $a_1=0$ we have (in their notation) 
$$
pa_2^{-1}s^p \le K(s) \le p a_2 s^p \ \text{ on } \R^+, \qquad 0 \le F(s) \le \frac{b_1}{p}s^p \ \text{ on } \R^+,
$$
hence
$$
\frac{1}{K^{-1}(F(s))} \not \in L^1(0^+).
$$
The last expression is a necessary and sufficient condition for the validity strong maximum principle for $C^1$ solutions $u$ of $L_{\oF}u \le 0$. Therefore, $w>0$ on $M\backslash K$ follows since $w$ is not identically zero by contruction. In conclusion, $w$ is an Evans potential relative to $(K,\Omega,\eps)$, as desired.
\end{proof}

\vspace{0.5cm}

\noindent \textbf{Acknowledgements: } We would like to
thank prof. Anders Bjorn for an helpful e-mail discussion, and in
particular for having suggested us the reference to Theorem
\ref{teobjorn}. Furthermore, we thank proff. S. Pigola, M. Rigoli and A.G. Setti for a very careful reading 
of this paper and for their useful comments.

%
%
%
%
%
%
%
%
%

\bibliographystyle{ams_and_hyperref}
\bibliography{bib}

\end{document}